\providecommand{\U}[1]{\protect\rule{.1in}{.1in}}
\newcommand{\Rmnum}[1]{\expandafter\@slowromancap\romannumeral #1@}
\newtheorem{definition}{Definition}
\newtheorem{proposition}{Proposition}
\newtheorem{theorem}{Theorem}
\newtheorem{lemma}{Lemma}
\newtheorem{remark}{Remark}
\newtheorem{example}{Example}
\begin{document}

\title{Stochastic asymptotical regularization \\ for linear inverse problems}


\author[1,2]{Ye Zhang}
\author[3]{Chuchu Chen\footnote{Corresponding author}}

\affil[1]{School of Mathematics and Statistics, Beijing Institute of Technology, 100081 Beijing, China}
\affil[2]{Shenzhen MSU-BIT University, 518172 Shenzhen, China}
\affil[3]{Academy of Mathematics and Systems Science, Chinese Academy of Sciences, 100080 Beijing,  China}
\date{}

\maketitle

\begin{abstract}
We introduce Stochastic Asymptotical Regularization (SAR) methods for the uncertainty quantification of the stable approximate solution of ill-posed linear-operator equations, which are deterministic models for numerous inverse problems in science and engineering. We prove the regularizing properties of SAR with regard to mean-square convergence. We also show that SAR is an optimal-order regularization method for linear ill-posed problems provided that the terminating time of SAR is chosen according to the smoothness of the solution. This result is proven for both \emph{a priori} and \emph{a posteriori} stopping rules under general range-type source conditions. Furthermore, some converse results of SAR are verified. Two iterative schemes are developed for the numerical realization of SAR, and the convergence analyses of these two numerical schemes are also provided. A toy example and a real-world problem of biosensor tomography are studied to show the accuracy and the advantages of SAR: compared with the conventional deterministic regularization approaches for deterministic inverse problems, SAR can provide the uncertainty quantification of the quantity of interest, which can in turn be used to reveal and explicate the hidden information about real-world problems, usually obscured by the incomplete mathematical modeling and the ascendence of complex-structured noise.
\end{abstract}


\section{Introduction}
\label{sec:Introduction}

In this paper, we are interested in solving deterministic linear inverse problems of the form
\begin{equation}\label{OperatorEq}
A x = y,
\end{equation}
where $A$ is a compact linear operator acting between two infinite dimensional Hilbert spaces $\mathcal{X}$ and $\mathcal{Y}$. For simplicity, we use $\langle \cdot, \cdot \rangle$ and $\|\cdot\|$ to denote the inner products and norms, respectively, for both $\mathcal{X}$ and $\mathcal{Y}$. Suppose that, instead of the exact data $y$, we are given noisy measurement $y^\delta\in \mathcal{Y}$, which also obeys the deterministic noise model with noise level $\delta>0$:
\begin{equation}\label{noiseData}
\|y^\delta - y\|\leq \delta.
\end{equation}
In addition, we assume that $\mathcal{R}(A)$ is an infinite dimensional subspace of $\mathcal{Y}$. Then,
since $A$ is compact and ${\rm dim}(\mathcal{R}(A))=\infty$, we have $\mathcal{R}(A)\neq \overline{\mathcal{R}(A)}$, and the problem (\ref{OperatorEq}) is ill-posed and of type II, as defined by Nashed \cite{Nashed}. Hence, to obtain the stable approximate solution of problem (\ref{OperatorEq})-(\ref{noiseData}), an appropriate regularization method should be designed. The formulation (\ref{OperatorEq})-(\ref{noiseData}) is a typical deterministic mathematical model for many linear inverse problems with applications in the natural sciences if it is supposed that unknown physical characteristics $x$ cannot be measured directly. From experiments, it is possible to obtain only noisy data $y^\delta$ connected with $x$ with the help of a forward mathematical model $A$. Some important areas in which (\ref{OperatorEq}) can be applied are inverse source problems in partial differential equations and the Fredholm integral equation of the first kind, which are the basis of fundamental mathematical models in geophysics, imaging, and many other fields.

In classical regularization theory, the deterministic regularization methods are usually designed for deterministic inverse problems (\ref{OperatorEq}). These deterministic regularization methods can be roughly classified into two categories: variational regularization methods and iterative regularization methods. Tikhonov regularization is probably the most prominent variational regularization method, while the Landweber iteration is certainly the most popular iterative regularization approach. The development of our new regularization approach in this paper was motivated by the latter, as, from the viewpoint of computation, the iterative approach seems more attractive, especially for large-scale problems. In view of this, we begin with the Landweber iteration for (\ref{OperatorEq}), which is given by
\begin{eqnarray}\label{Landweber}
x^\delta_{k+1} = x^\delta_{k} + \Delta t A^* ( y^\delta- A x^\delta_{k} ), \quad x^\delta_0 = x_0, \quad \Delta t\in(0, 2/\|A\|^2),
\end{eqnarray}
where $A^*$ denotes the adjoint operator of $A$. It is well known that the scheme (\ref{Landweber}) is turned into regularization algorithms, i.e. $x^\delta_{k^*} \to x^\dagger$ as $\delta\to0$, by stopping the iteration after an adequate number, $k^*=k^*(\delta)$, of steps. Here and later on, $x^\dagger$ denotes the $x_0$-minimum norm solution of the operator equation (\ref{OperatorEq}), which is assumed solvable with exact data.

The continuous analog to (\ref{Landweber}) as $\Delta t$ tends to zero is known as {\it asymptotic regularization} or {\it Showalter's method} (see, e.g.,~\cite{Tautenhahn-1994,Vainikko1986}). It is written as a first-order evolution equation of the form
\begin{equation}\label{FisrtFlow}
\dot{x}^\delta(t) = A^* ( y^\delta- A x^\delta(t) )
\end{equation}
with some initial condition, where an artificial scalar time $t$ is introduced. It has been shown that, by using Runge-Kutta integrators, all the properties of asymptotic regularization (\ref{FisrtFlow}) are carried over to its numerical realization~\cite{Rieder-2005,ZhaoMatheLu2020}. This continuous framework has recently received much attention in the field of computational mathematics. For instance, in the field of inverse problems, the authors in \cite{LuNiuWerner2021} studied a modified evolution equation, (\ref{FisrtFlow}), for linear inverse problems (\ref{OperatorEq}) in the presence of white noise. In particular, they connected this formulation, i.e. (\ref{FisrtFlow}), to some classical methods from data assimilation, namely the Kalman-Bucy filter and 3DVAR. Moving in a different direction, the authors in \cite{ZhangHof2018,GonHofZhan2020,ZhangHof2019,Botetal18} extended (\ref{FisrtFlow}) to second-order and fractional-order gradient flows. They proved that the developed high-order flows are accelerated optimal regularization methods, i.e. the optimal convergence rates can be obtained with approximately the square root of the iteration number that is needed for the first-order flow (\ref{FisrtFlow}). Besides the community of inverse problems, this idea has also recently been adopted in the field of machine learning for the theoretical understanding of machine learning: the continuous dynamical flows can be used to recover existing machine-learning models and construct new ones (see \cite{E2017,EMaWu2020}). Building on this idea, in this paper we are interested in the following stochastic differential equation, called the \emph{Stochastic Asymptotical Regularization} (SAR),
\begin{equation}\label{stochasticFlowLinear}
d x^\delta = A^*(y^\delta - Ax^\delta) dt + f(t) d B_t, \quad x^\delta(0)=x_0,
\end{equation}
where the initial point $x_0\in \mathcal{X}$ is non-random, the auxiliary function $f(t)\in L^\infty({\mathbb R}_+)$ will be defined later, and $B_t$ is an ${\mathcal X}$-valued $Q$-Wiener process, i.e.
\begin{equation}\label{Brownian}
B_t=\sum_{j=1}^{\infty}\ \sqrt{q_j} u_j \beta_j(t),
\end{equation}
where $\{u_j\}$ is the orthogonal basis of $N(A)^{\bot} \subset {\mathcal X}$, $\{q_j\}$ denotes the eigenvalues of the covariance operator $Q$ under the orthorgomal basis $\{u_j\}$, and $\{\beta_j\}$ is a family of independent ${\mathbb R}$-valued Brownian motions. Throughout this paper, we assume that $Q$ commutes with $A^*A$ and ${\rm tr}(Q(A^*A)^{-1})<\infty$. We also note that, compared with conventional regularization methods for deterministic inverse problems (\ref{OperatorEq}), SAR offers a stochastic solution, which can provide the uncertainty quantification for the solution of the considered inverse problem.

It should be noted that the formulation (\ref{stochasticFlowLinear}) is different from the conventional \emph{Stochastic Gradient Descent} (SGD) method \cite{RobbinsMonro1951}, which employs an unbiased estimator of the full gradient computed from one single randomly selected data point at each iteration. Recently, SGD and its variants have been established as the workhorses behind many challenging training tasks in deep learning \cite{BottouCurtisNocedal2018}. The regularizing properties of SGD for linear inverse problems with an \emph{a priori} and an \emph{a posteriori} stopping rule can be found in \cite{JinLu2019} and \cite{JahnJin2020}, respectively. An extended error estimation of SGD, taking into account the discretization levels, the decay of the step-size, and the general source conditions, can be found in \cite{LuMathe2021}. SGD for non-linear ill-posed problems was investigated in \cite{JinZhouZou2020}.

The remainder of this paper is structured as follows: in Section \ref{Convergence}, we perform the convergence analysis of SAR. Section \ref{Numerical} is devoted to the numerical realization and the convergence analysis of the corresponding numerical scheme of SAR. An abstract mathematical example and a real-world problem of biosensor tomography are presented in Section \ref{Examples}. Finally, concluding remarks are made in Section \ref{sec:Con}.


\section{Convergence analysis of SAR}
\label{Convergence}

This section presents some basis properties of our new regularization approach based on the stochastic partial differential equation -- SAR in (\ref{stochasticFlowLinear}), including the regularization property, convergence rates under general source conditions, some converse results, and the convergence-rate results of the best worst-case mean-square error.

\subsection{Preliminary results of SAR}

We start with the well-posedness of the dynamical flow \eqref{stochasticFlowLinear}.
\begin{proposition}\label{ExistLinear}
For any $f\in L^\infty({\mathbb R}_+)$, the stochastic differential equation \eqref{stochasticFlowLinear} has a unique mild solution 
$x^\delta(t)\in \mathcal{X}$, given by
\begin{equation}\label{MildSolution}
x^{\delta}(t)=e^{-A^{*}At}x_0+\int_0^t e^{-A^{*}A(t-s)}A^{*}y^{\delta}ds
+ \int_0^t e^{-A^{*}A(t-s)}f(s)dB_s.
\end{equation}

The random variable $x^{\delta}(t)$ is Gaussian on $ \mathcal{X}$ with mean
\begin{equation}
\label{xiMean}
\mathbb{E} x^{\delta}(t) = e^{-A^{*}At}x_0+\int_0^t e^{-A^{*}A(t-s)}A^{*}y^{\delta}ds
\end{equation}
and variance operator given by
\begin{equation}
\label{xiVariance}
\textup{Var}(x^{\delta}(t)) = \int_0^t e^{-A^{*}A(t-s)} Q e^{-A^{*}A(t-s)} [f(s)]^2ds.
\end{equation}
\end{proposition}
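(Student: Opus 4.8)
The plan is to treat \eqref{stochasticFlowLinear} as a linear stochastic evolution equation with additive noise and a \emph{bounded} generator, which lets us bypass the full machinery of analytic semigroups. Since $A$ is compact, $A^*A$ is a bounded, self-adjoint, nonnegative operator, so $-A^*A$ generates the uniformly continuous semigroup $S(t):=e^{-A^*At}$ defined by the operator exponential. Rewriting the drift as $-A^*Ax^\delta + A^*y^\delta$, equation \eqref{stochasticFlowLinear} becomes an affine-linear SDE in $\mathcal{X}$ with state-independent (additive) diffusion $f(t)\,dB_t$. First I would verify that the candidate mild solution \eqref{MildSolution} is well-defined, the only nontrivial part being the stochastic convolution $W(t):=\int_0^t S(t-s)f(s)\,dB_s$.

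The main obstacle --- and the place where the standing assumptions enter --- is establishing that $W(t)$ is a well-defined $\mathcal{X}$-valued Gaussian random variable. The integrability criterion for Hilbert-space stochastic integrals against a $Q$-Wiener process requires $\int_0^t \|S(t-s)f(s)Q^{1/2}\|_{\mathrm{HS}}^2\,ds<\infty$, where $\|\cdot\|_{\mathrm{HS}}$ is the Hilbert--Schmidt norm. Using that $Q$ commutes with $A^*A$, so that both are diagonal in the basis $\{u_j\}$ with $A^*Au_j=\lambda_j u_j$ and $Qu_j=q_j u_j$, I would compute $\|S(t-s)f(s)Q^{1/2}\|_{\mathrm{HS}}^2=[f(s)]^2\sum_j e^{-2\lambda_j(t-s)}q_j$ and bound the time integral by $\|f\|_{L^\infty}^2\sum_j q_j/(2\lambda_j)=\tfrac12\|f\|_{L^\infty}^2\,\mathrm{tr}(Q(A^*A)^{-1})<\infty$. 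This is exactly the role of the trace condition: it compensates for the accumulation of the eigenvalues $\lambda_j$ at $0$ (so that $(A^*A)^{-1}$ is unbounded) by forcing the noise intensities $q_j$ to decay sufficiently fast, while $f\in L^\infty({\mathbb R}_+)$ controls the temporal factor.

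Having secured integrability, I would derive the representation \eqref{MildSolution} by an integrating-factor argument: applying the It\^o product rule to $Y(t):=e^{A^*At}x^\delta(t)$, the two $A^*Ax^\delta$ contributions cancel, leaving $dY(t)=e^{A^*At}A^*y^\delta\,dt+e^{A^*At}f(t)\,dB_t$; integrating from $0$ to $t$ and multiplying back by $e^{-A^*At}$ yields \eqref{MildSolution}. Uniqueness is immediate, since the difference of two mild solutions solves the deterministic equation $\dot z=-A^*Az$ with $z(0)=0$, whence $z\equiv 0$.

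Finally, for the Gaussian structure I would argue that the first two terms of \eqref{MildSolution} are deterministic and hence constitute the mean \eqref{xiMean}, while $W(t)$, as a stochastic integral of a deterministic operator-valued integrand against Gaussian noise, is centered Gaussian, obtained as an $L^2$-limit of Gaussian simple-process integrals. Its covariance follows from the It\^o isometry: since $S(t-s)$ is self-adjoint and $f(s)$ scalar, $\mathrm{Var}(x^\delta(t))=\mathrm{Cov}(W(t))=\int_0^t S(t-s)\,Q\,S(t-s)\,[f(s)]^2\,ds$, which is precisely \eqref{xiVariance}. I expect the computations in this last step to be routine once the integrability estimate of the second paragraph is in hand.
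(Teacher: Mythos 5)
Your proposal is correct and follows essentially the same route as the paper: the crux in both is the Hilbert--Schmidt integrability estimate $\int_0^t\|e^{-A^*A(t-s)}f(s)Q^{1/2}\|_{HS(\mathcal{X};\mathcal{X})}^2\,ds\le \tfrac12\|f\|_\infty^2\,{\rm tr}(Q(A^*A)^{-1})$, exploiting that $Q$ and $A^*A$ are simultaneously diagonalized. The only difference is that the paper then simply invokes \cite[Theorem 5.4]{DaPratoZabczyk2014} for existence, uniqueness, and Gaussianity, whereas you spell out the integrating-factor derivation and the covariance computation yourself; your term-by-term integration in the eigenbasis is in fact the cleaner way to arrive at the $\tfrac12\,{\rm tr}(Q(A^*A)^{-1})$ bound.
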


\begin{proof}
It is sufficient to deal with the stochastic term.
From the It\^o isometry, we have
\begin{align*}
{\mathbb E}\Big\|\int_0^t e^{-A^{*}A(t-s)}f(s)dB(s)\Big\|^2=
&\int_0^t \|e^{-A^{*}A(t-s)}f(s)Q^{\frac12}\|^2_{HS(\mathcal{X};\mathcal{X})}ds\\
&\leq \|f\|_{\infty}^2\int_0^t \|e^{-A^{*}A(t-s)}Q^{\frac12}\|^2_{HS(\mathcal{X};\mathcal{X})}ds,
\end{align*}
where $HS(\mathcal{X};\mathcal{X})$ denotes the space of Hilbert-Schmidt operators from $\mathcal{X}$ to $\mathcal{X}$ endowed with the norm $\|\cdot\|_{HS(\mathcal{X};\mathcal{X})}=\Big(\sum_{j=1}^{\infty}\|\cdot u_j\|^2\Big)^{\frac12}$.
Note that
\begin{align*}
\int_0^t \|e^{-A^{*}A(t-s)}Q^{\frac12}\|^2_{HS(\mathcal{X};\mathcal{X})}ds
&\leq {\rm tr}(Q(A^*A)^{-1}) \int_0^t \|(A^*A)^{\frac12}e^{-A^{*}A(t-s)}\|^2 ds\\
&\leq  {\rm tr}(Q(A^*A)^{-1}) \int_0^t \sup_{\lambda>0} \big\{\lambda e^{-2\lambda(t-s)}\big\}ds\\
&\leq \frac12  {\rm tr}(Q(A^*A)^{-1})<\infty.
\end{align*}
Therefore, according to \cite[Theorem 5.4]{DaPratoZabczyk2014}, the proposition holds true.
\end{proof}

We now introduce some definitions that will be frequently used for convergence analysis in this paper.

\begin{definition}
\label{Index}
A real function $\varphi: (0,\infty) \to (0,\infty)$ is called an index function if it is continuous and strictly increasing,  and satisfies the condition $\lim_{\lambda\to 0+} \varphi(\lambda)=0$. Let $\mathcal{I}$ denote the set of all index functions.
\end{definition}

\begin{definition}
\label{gIndex}
An index function $\varphi$ is called $g$-subhomogeneous if there exists a decreasing function $g$ such that $\varphi(\gamma \lambda) \leq g(\gamma)\varphi(\lambda)$ for all $\gamma > 0, \ \lambda > 0$. Let $\mathcal{I}_{g}$ denote the set of all $g$-subhomogeneous index functions.
\end{definition}

In order to study the optimality results of convergence rates for SAR \eqref{stochasticFlowLinear}, we need to consider the following classes of decreasing functions:

\begin{equation}
\label{Source2}
\mathcal{S}_{C_\sigma} := \left\{ \varphi\in \mathcal{I} : \forall \lambda\in(0, \|A\|^2],~ \varphi(\lambda) /\varphi(t^{-1}) \leq C_\sigma e^{\lambda t \sigma} \right\},
\end{equation}
where positive numbers $C_\sigma$ and $\sigma$ are independent on $\lambda$ and $t$.

\begin{equation}
\label{Source3}
\mathcal{S}^{g}_{\zeta} = \left\{ \begin{array}{ll}
\varphi\in \mathcal{I}_{g}:~ \text{There exists a monotonically decreasing, integrable function~} \zeta \\ \qquad\qquad
\text{~such that~} e^{2\lambda t} \zeta\left( \frac{\varphi(\lambda)}{\varphi(t^{-1})} \right) \geq 1 \text{~for~} \lambda t> 1.
\end{array}
\right\}.
\end{equation}

In order to investigate the properties of sets $\mathcal{S}_{C_\sigma}$ and $\mathcal{S}^{g}_{\zeta}$, we introduce the following concept, which is generalized from \cite[Definition~2]{Mathe-2003}:

\begin{definition}
\label{def:covered}
A function $\varphi$ is said to be \emph{covered} by another function $\psi$ if there are $\underline c, \bar T >0$ such that, for all $t\geq \bar T$,
\begin{equation}\label{QualificationCover}
\underline c\,\frac{\psi(t^{-1})}{\varphi(t^{-1})} \le \min \limits _{t^{-1} \le \lambda \le \|A\|^2} \frac{\psi(\lambda)}{\varphi(\lambda)}.
\end{equation}
\end{definition}

The following assertions are straightforward:

\begin{proposition} \label{pro:covered}
(i) $\varphi\in \mathcal{S}_{C_\sigma}$ if $\psi\in \mathcal{S}_{C_\sigma}$ and $\varphi$ is covered by $\psi$.

(ii) $\varphi\in \mathcal{S}^{g}_\zeta$ if $\psi\in \mathcal{S}^{g}_\zeta$ and $\varphi$ is covered by $\psi$.

(iii) If the quotient function $\lambda \mapsto  \frac{\psi(\lambda)}{\varphi(\lambda)}$ is increasing for $0<\lambda \le \bar \lambda$ and some $\bar \lambda>0$, $\varphi$ is covered by $\psi$. If, in particular, $\varphi(\lambda)\in \mathcal{S}_{C_\sigma}$ is concave for $0<\lambda \le \bar \lambda$, $\varphi$ is covered by $\psi(\lambda)=\lambda$.
\end{proposition}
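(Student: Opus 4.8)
The plan is to prove each of the three parts of Proposition~\ref{pro:covered} by unwinding the definitions. The central object is the covering relation \eqref{QualificationCover}, which provides a lower bound on the ratio $\psi(\lambda)/\varphi(\lambda)$ over $\lambda\in[t^{-1},\|A\|^2]$ in terms of its value at the left endpoint $\lambda=t^{-1}$, up to the constant $\underline c$. The guiding idea throughout is that membership of $\varphi$ in either $\mathcal{S}_{C_\sigma}$ or $\mathcal{S}^g_\zeta$ is governed by the quotient $\varphi(\lambda)/\varphi(t^{-1})$ for $\lambda\ge t^{-1}$, and the covering hypothesis lets me transfer control of this quotient from $\psi$ to $\varphi$.

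For part~(i), I would start from $\psi\in\mathcal{S}_{C_\sigma}$, so that $\psi(\lambda)/\psi(t^{-1})\le C_\sigma e^{\lambda t\sigma}$ for all $\lambda\in(0,\|A\|^2]$. The target is the analogous bound for $\varphi$. The key algebraic manipulation is to write
\begin{equation*}
\frac{\varphi(\lambda)}{\varphi(t^{-1})}
= \frac{\psi(\lambda)}{\psi(t^{-1})}\cdot
\frac{\psi(t^{-1})/\varphi(t^{-1})}{\psi(\lambda)/\varphi(\lambda)},
\end{equation*}
valid for $\lambda\ge t^{-1}$. By the covering inequality \eqref{QualificationCover}, the denominator $\psi(\lambda)/\varphi(\lambda)$ is bounded below by $\underline c\,\psi(t^{-1})/\varphi(t^{-1})$ for $t\ge\bar T$, so the last factor is at most $1/\underline c$. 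Combining with the $\mathcal{S}_{C_\sigma}$-bound for $\psi$ yields $\varphi(\lambda)/\varphi(t^{-1})\le (C_\sigma/\underline c)\,e^{\lambda t\sigma}$, i.e.\ $\varphi\in\mathcal{S}_{C_\sigma}$ with the enlarged constant $C_\sigma/\underline c$. I would note that the regime $t<\bar T$ and $\lambda<t^{-1}$ can be absorbed by adjusting the constant, since the quotient is continuous and bounded on compacta. Part~(ii) follows the same template: since $\zeta$ is decreasing, a lower bound on the argument $\varphi(\lambda)/\varphi(t^{-1})$ produces an upper bound on $\zeta(\cdot)$, so I rewrite the defining inequality of $\mathcal{S}^g_\zeta$ for $\varphi$ and use the covering relation together with $\varphi\in\mathcal{I}_g$ to reduce it to the known inequality for $\psi$.

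Part~(iii) is where the real content lies, as it must supply verifiable sufficient conditions for the covering relation itself rather than merely propagate it. If the quotient $\lambda\mapsto\psi(\lambda)/\varphi(\lambda)$ is increasing on $(0,\bar\lambda]$, then for any $t$ with $t^{-1}\le\bar\lambda$ the minimum of this quotient over $[t^{-1},\|A\|^2]$ — after splitting the interval at $\bar\lambda$ — is controlled from below by its value at the left endpoint $t^{-1}$, which is exactly the covering inequality \eqref{QualificationCover} with $\underline c=1$ on the monotone part; on the remaining piece $[\bar\lambda,\|A\|^2]$ the quotient is bounded below by a positive constant since it is continuous and positive on a compact set, and this constant can be folded into $\underline c$. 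Taking $\bar T=\bar\lambda^{-1}$ closes the argument. For the concavity special case with $\psi(\lambda)=\lambda$, I would verify that concavity of $\varphi$ forces $\lambda/\varphi(\lambda)$ to be increasing: writing the derivative of $\lambda/\varphi(\lambda)$ as $(\varphi(\lambda)-\lambda\varphi'(\lambda))/\varphi(\lambda)^2$, concavity gives $\varphi(\lambda)\ge\varphi(0+)+\lambda\varphi'(\lambda)\ge\lambda\varphi'(\lambda)$ (using $\varphi(0+)=0$ from the index-function property), so the numerator is nonnegative and the quotient increases, reducing this case to the monotone case just treated.

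The main obstacle I anticipate is the careful bookkeeping of the endpoint regimes in parts~(i) and~(ii): the covering inequality only holds for $t\ge\bar T$ and only controls $\lambda\in[t^{-1},\|A\|^2]$, whereas the target set-membership conditions quantify over all $\lambda\in(0,\|A\|^2]$ and all $t>0$. I expect that the sub-threshold ranges contribute only bounded factors that are harmless after enlarging the constants, but making this rigorous requires invoking continuity and strict positivity of the index functions on compact subintervals of $(0,\|A\|^2]$, together with the index-function normalization $\varphi(\lambda)\to0$ as $\lambda\to0+$ to rule out degeneracies near the origin. The monotonicity argument in part~(iii) is routine once concavity is correctly exploited, so I view the constant-tracking in~(i)--(ii) as the genuinely delicate point.
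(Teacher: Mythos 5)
Your argument is correct: the factorization $\varphi(\lambda)/\varphi(t^{-1})=\bigl(\psi(\lambda)/\psi(t^{-1})\bigr)\cdot\bigl(\psi(t^{-1})/\varphi(t^{-1})\bigr)/\bigl(\psi(\lambda)/\varphi(\lambda)\bigr)$ combined with \eqref{QualificationCover} handles (i) and (ii) (with the constant absorbed into $C_\sigma$, respectively into a rescaled $\zeta$), and the monotonicity/concavity reasoning for (iii), including the tangent-line inequality with $\varphi(0+)=0$, is sound. The paper itself offers no proof — it simply declares these assertions straightforward — and your write-up is precisely the natural unwinding of the definitions that the authors had in mind, including the correct observation that the regimes $t<\bar T$ and $\lambda<t^{-1}$ only cost bounded adjustments of the constants.
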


Now, we study the elements in the sets $\mathcal{S}_{C_\sigma}$ and $\mathcal{S}^{g}_{\zeta}$, which are related to the range-type source conditions in the regularization theory of inverse problems:
\begin{equation}\label{SourceCondition}
x_0 - x^\dagger = \varphi (A^*A) v, \quad \|v\|\leq \rho, \quad \varphi\in \mathcal{S}_{C_\sigma}.
\end{equation}

In this paper, we are mainly interested in the following two examples of source conditions:
\begin{example}
(i) H\"{o}lder-type source conditions $\varphi_p$ ($p>0$): $\varphi_p(\lambda)=\lambda^p$.

(ii) Logarithmic source conditions $\varphi_\mu$ ($\mu>0$):
\begin{equation}\label{logarithmicQualification}
\varphi_{\mu}(\lambda) = \left\{\begin{array}{ll}
\log^{-\mu}(1/\lambda) \qquad \textrm{~for} \quad 0< \lambda  \leq e^{-\mu-1},  \\
\mbox{arbitrarily extended as index function for} \;\;\lambda > e^{-\mu-1}.
\end{array}\right.
\end{equation}
\end{example}

The relationship between source condition functions $\varphi_p$, $\varphi_\mu$ and the classes $\mathcal{S}_{C_\sigma}$ and $\mathcal{S}^{g}_{\zeta}$ is investigated in the following lemma.

\begin{lemma}\label{HolderLogarithmic}
(i) $\varphi_p \in \mathcal{S}_{C_\sigma} \cap \mathcal{S}^{g}_{\zeta}$ with $C_\sigma= p^p/(e^p \sigma^p)$, arbitrary positive number $\sigma$, $g(\lambda)=\lambda$ and $\zeta(\lambda)=(\ln \lambda /2)^{-p}$.

(ii) $\varphi_\mu \in \mathcal{S}_{C_\sigma}\cap \mathcal{S}^{g}_{\zeta}$  with arbitrary positive number $\sigma$.
\end{lemma}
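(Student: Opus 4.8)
The plan is to observe that membership in either $\mathcal{S}_{C_\sigma}$ or $\mathcal{S}^{g}_{\zeta}$ is governed entirely by the quotient $\varphi(\lambda)/\varphi(t^{-1})$, and that for the H\"older function this quotient depends only on the single scalar $u:=\lambda t$, since $\varphi_p(\lambda)/\varphi_p(t^{-1})=(\lambda t)^p=u^p$. Thus both claims in part (i) collapse to one-variable inequalities in $u$, and I would treat the two sets separately.

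For the $\mathcal{S}_{C_\sigma}$ membership of $\varphi_p$, I would rewrite the defining inequality as $u^p\le C_\sigma e^{\sigma u}$ and reduce it to maximizing $h(u):=u^p e^{-\sigma u}$ over $u>0$. A single derivative computation locates the maximizer at $u=p/\sigma$ and yields $\max_u h(u)=p^p/(e^p\sigma^p)$, which is exactly the advertised constant $C_\sigma$; since $\sigma>0$ was arbitrary, this also explains the freedom in $\sigma$. For the $\mathcal{S}^{g}_{\zeta}$ membership I would first record the $g$-subhomogeneity: substitution gives $\varphi_p(\gamma\lambda)=\gamma^p\varphi_p(\lambda)$ (so the multiplier is $\gamma^p$, reducing to $g(\gamma)=\gamma$ in the base case $p=1$). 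It then remains to verify, for $\zeta(s)=(\ln s/2)^{-p}$ and $u=\lambda t>1$, that $e^{2u}\zeta(u^p)\ge 1$. Taking logarithms turns this into $p\ln\!\big(\tfrac{p}{2}\ln u\big)\le 2u$, i.e.\ the comparison of a double logarithm against a linear function; this holds for all $u>1$, the only delicate point being the regime $u\downarrow 1$, which I would settle by an elementary monotonicity argument on the gap $2u-p\ln(\tfrac{p}{2}\ln u)$. Monotonicity of $\zeta$ on the pertinent range $s>1$ is immediate.

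For the logarithmic function $\varphi_\mu$ I would avoid direct estimation and instead use the covering machinery of Proposition~\ref{pro:covered} to transfer the result from the H\"older case with $p=1$. Writing $\ell:=\log(1/\lambda)$, a derivative computation shows that $\lambda/\varphi_\mu(\lambda)=\lambda\,\ell^{\mu}$ has derivative $\ell^{\mu-1}(\ell-\mu)$, hence is increasing for $\ell>\mu$, i.e.\ on $(0,e^{-\mu})$, which contains the defining interval $(0,e^{-\mu-1}]$. By Proposition~\ref{pro:covered}(iii) this makes $\varphi_\mu$ covered by $\psi(\lambda)=\lambda=\varphi_1$; since $\varphi_1\in\mathcal{S}_{C_\sigma}\cap\mathcal{S}^{g}_{\zeta}$ by part (i), Proposition~\ref{pro:covered}(i)--(ii) deliver $\varphi_\mu\in\mathcal{S}_{C_\sigma}\cap\mathcal{S}^{g}_{\zeta}$ for arbitrary $\sigma>0$. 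The subhomogeneity $\varphi_\mu\in\mathcal{I}_{g}$ needed to even speak of $\mathcal{S}^{g}_{\zeta}$ is checked directly from the slow variation of $\log^{-\mu}$.

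The main obstacle is the logarithmic case, and within it the interaction with the \emph{arbitrary} extension of $\varphi_\mu$ beyond $\lambda=e^{-\mu-1}$. The covering inequality \eqref{QualificationCover} requires controlling $\min_{t^{-1}\le\lambda\le\|A\|^2}\psi(\lambda)/\varphi(\lambda)$ over the \emph{whole} interval, including the part $[e^{-\mu-1},\|A\|^2]$ on which $\varphi_\mu$ is only an unspecified index function. I expect to handle this by noting that $\lambda/\varphi_\mu(\lambda)$ is bounded below by a positive constant on the compact extension region, whereas $t^{-1}/\varphi_\mu(t^{-1})\to 0$ as $t\to\infty$; choosing $\bar T$ large enough forces the minimum to be attained at the lower endpoint $\lambda=t^{-1}$, so that \eqref{QualificationCover} holds with $\underline c=1$. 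The remaining bookkeeping — confirming $\varphi_\mu$ is a genuine $g$-subhomogeneous index function and checking the monotonicity and integrability properties of $\zeta$ on the relevant (bounded) range of $\lambda t$ — is routine once this uniform-covering point is secured.
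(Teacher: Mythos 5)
Your treatment of the $\mathcal{S}_{C_\sigma}$ part of (i) (maximizing $u^{p}e^{-\sigma u}$ at $u=p/\sigma$) and your whole argument for (ii) (showing $\lambda/\varphi_\mu(\lambda)$ is increasing on $(0,e^{-\mu})\supset(0,e^{-\mu-1}]$ and invoking Proposition~\ref{pro:covered} to transfer membership from $\psi(\lambda)=\lambda$) coincide with the paper's proof; your extra care about the arbitrary extension region and about $\varphi_\mu\in\mathcal{I}_g$ is welcome but not where the difficulty lies.

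The genuine gap is in the $\mathcal{S}^{g}_{\zeta}$ part of (i). You take the lemma's advertised witness $\zeta(s)=\bigl(\tfrac{\ln s}{2}\bigr)^{-p}$ at face value and assert that the resulting inequality $p\ln\bigl(\tfrac{p}{2}\ln u\bigr)\le 2u$ ``holds for all $u>1$,'' with the only delicate regime being $u\downarrow 1$. That regime is in fact trivial (the left side tends to $-\infty$), but the inequality itself is \emph{false} for large $p$ at moderate $u$: for $p=10$ and $u=3$ the left side is $10\ln(5\ln 3)\approx 17.0$ while $2u=6$; equivalently $e^{6}\zeta(3^{10})\approx 1.6\times10^{-5}<1$. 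Moreover $\zeta(s)=((\ln s)/2)^{-p}$ is not integrable at infinity for any $p>0$, so it cannot serve as the ``monotonically decreasing, integrable'' function required by \eqref{Source3} in the first place. The paper's proof avoids both problems by choosing $\zeta(\lambda)=e^{-2\lambda^{1/p}}$ (the printed ``$e(-2\lambda^{1/p})$'' is evidently a typo), for which $e^{2u}\zeta(u^{p})=1$ holds as an identity and which is integrable; since Definition \eqref{Source3} only asks for the \emph{existence} of some admissible $\zeta$, the membership $\varphi_p\in\mathcal{S}^{g}_{\zeta}$ is still true. In fairness, the discrepancy originates in the lemma statement itself, which advertises the logarithmic $\zeta$; but as written your verification step would fail, and you should replace the witness rather than try to prove the stated inequality.
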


\begin{proof}
(i) From
\begin{equation*}
\sup_{t>0} e^{-\lambda \sigma t} t^p =_{t=p/(\lambda \sigma)} p^p/(e^p \sigma^p \lambda^p)
\end{equation*}
we conclude that, for any $\sigma>0$, $\varphi_p \in \mathcal{S}_{C_\sigma}$ with $C_\sigma= p^p/(e^p \sigma^p)$. The relation $\varphi_p \in \mathcal{S}^{g}_{\zeta}$ holds, with $g(\lambda)=\lambda$ and $\zeta(\lambda)=e(-2\lambda^{1/p})$.

(ii) Note that for arbitrary $\mu>0$ the index function $\varphi_\mu(\lambda)$ is concave for all $0 < \lambda \leq e^{-\mu-1}$, and hence, because of (iii) of Proposition \ref{pro:covered}, is covered by $\varphi^1(\lambda)=\lambda$. Consequently, $\varphi_\mu \in \mathcal{S}_{C_\sigma}\cap \mathcal{S}^{g}_{\zeta}$.
\end{proof}

\subsection{Regularization property of SAR}

In this subsection, we perform the convergence analysis of the method (\ref{stochasticFlowLinear}) with regard to regularization theory. Throughout this paper, we assume $x_0\in N(A)^{\bot}$. Let $\{\lambda_j; u_j, v_j\}_{j=1}^\infty$ be the well-defined singular system for the compact linear operator $A$, i.e.~we have $A u_j= \lambda_j v_j$ and $A^* v_j = \lambda_j u_j$ with ordered singular values $\|A\|=\lambda_1 \geq \lambda_2 \geq \cdot\cdot\cdot \geq \lambda_j \geq \lambda_{j+1} \geq \cdot\cdot\cdot \to 0$ as $j \to \infty$. Since the eigenelement $\{u_j\}_{j=1}^\infty$ forms an orthogonal basis in $N(A)^{\bot} \subset \mathcal{X}$, according to the construction of the approximate solution $x^\delta(t)$ by SAR in (\ref{stochasticFlowLinear}), it has the decomposition $x^\delta(t)=\sum_j \xi_j(t) u_j$. Hence, by using
\begin{equation}\label{SVDEq1}
\quad \langle d x^\delta, u_j \rangle = \langle A^*(y^\delta - Ax^\delta) dt, u_j \rangle + \langle f(t) d B_t, u_j \rangle, ~j=1,2, \cdots,
\end{equation}
we obtain the stochastic differential equations for coefficients $\{\xi_j(t)\}^\infty_{j=1}$:
\begin{equation}\label{SVDEq2}
 d \xi_j(t) = \left( \lambda_j \langle y^\delta , v_j \rangle -  \lambda^2_j \xi_j(t) \right) dt + f(t) \sqrt{q_j} d \beta_j(t), ~ \xi_j(0)=\langle x_0,u_j \rangle, ~j=1,2, \cdots.
\end{equation}

\begin{proposition}\label{SolutionODE}
The stochastic differential equation (\ref{SVDEq2}) has a unique solution,
\begin{eqnarray*}\label{GeneralSolutionXi}
\xi_j(t) = e^{-\lambda^2_j t}\langle x_0,u_j \rangle + \frac{1-e^{-\lambda^2_j t}}{\lambda_j} \langle y^\delta , v_j \rangle + \int^t_0 \sqrt{q_j} e^{-\lambda^2_j (t-s)} f(s) d\beta_j(s) ,
\end{eqnarray*}
where the stochastic integral $\int^t_0 \sqrt{q_j} e^{-\lambda^2_j (t-s)} f(s) d \beta_j(s)$ is Gaussian with distribution
$$\mathcal{N}\left(0,  \int^t_0 q_j e^{-2\lambda^2_j (t-s)} [f(s)]^2 d s\right).$$
Moreover, $\xi_j(t)$ is also Gaussian, with mean
\begin{equation}
\label{xiMean}
\mathbb{E} \xi_j(t) = e^{-\lambda^2_j t}\langle x_0,u_j \rangle + \frac{1-e^{-\lambda^2_j t}}{\lambda_j} \langle y^\delta , v_j \rangle
\end{equation}
and variance
\begin{equation}
\label{xiVariance}
\textup{Var} \left( \xi_j(t) \right)= \int^t_0 q_j e^{-2\lambda^2_j (t-s)} [f(s)]^2 d s.
\end{equation}
Consequently, if $f\in \mathcal{I}$, $\xi_j(t)\sim \mathcal{N}\left(\frac{\langle y^\delta , v_j \rangle}{\lambda_j} , 0\right)$ as $t\to \infty$.
\end{proposition}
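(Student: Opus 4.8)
The plan is to treat \eqref{SVDEq2}, for each fixed $j$, as a scalar linear (Langevin-type) stochastic differential equation with constant drift rate $-\lambda_j^2$, constant forcing $\lambda_j\langle y^\delta,v_j\rangle$, and a purely deterministic, time-dependent diffusion coefficient $\sqrt{q_j}\,f(t)$. Since the drift is affine in the state and the diffusion coefficient does not depend on the state at all, the coefficients are globally Lipschitz, so existence and uniqueness of a strong solution follow at once from the standard theory of It\^o SDEs; the only real work is producing the closed-form expression, for which I would use the integrating factor $e^{\lambda_j^2 t}$ (variation of constants).

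Concretely, I would apply It\^o's formula to the process $Y_t:=e^{\lambda_j^2 t}\xi_j(t)$. Because the integrating factor is deterministic and smooth, the second-order term vanishes and no It\^o correction appears; the two drift contributions proportional to $\xi_j$ cancel, leaving $dY_t=\lambda_j\langle y^\delta,v_j\rangle\,e^{\lambda_j^2 t}\,dt+\sqrt{q_j}\,e^{\lambda_j^2 t}f(t)\,d\beta_j(t)$. Integrating from $0$ to $t$, evaluating the elementary deterministic integral $\int_0^t e^{\lambda_j^2 s}\,ds=(e^{\lambda_j^2 t}-1)/\lambda_j^2$, and multiplying through by $e^{-\lambda_j^2 t}$ then reproduces exactly the claimed formula for $\xi_j(t)$.

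For the distributional statements I would argue as follows. The stochastic part is the Wiener integral of the deterministic integrand $s\mapsto\sqrt{q_j}\,e^{-\lambda_j^2(t-s)}f(s)$, which lies in $L^2(0,t)$ since $f\in L^\infty(\mathbb{R}_+)$; a Wiener integral of a deterministic $L^2$ integrand is a centered Gaussian whose variance is supplied by the It\^o isometry, giving precisely $\int_0^t q_j e^{-2\lambda_j^2(t-s)}[f(s)]^2\,ds$. The first two terms of the solution are deterministic, so $\xi_j(t)$ is an affine shift of this Gaussian and hence itself Gaussian, with the deterministic shift moving the mean but not the variance, yielding the stated \eqref{xiMean} and \eqref{xiVariance}.

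Finally, for the limit as $t\to\infty$ I would separate the mean and the variance. Convergence of the mean is immediate, since $e^{-\lambda_j^2 t}\to0$ and $(1-e^{-\lambda_j^2 t})/\lambda_j\to1/\lambda_j$, so $\mathbb{E}\,\xi_j(t)\to\langle y^\delta,v_j\rangle/\lambda_j$. The decisive and most delicate point is showing $\textup{Var}(\xi_j(t))\to0$: after the substitution $\tau=t-s$ the variance becomes $q_j\int_0^t e^{-2\lambda_j^2\tau}[f(t-\tau)]^2\,d\tau$, and I would split the integral at a fixed threshold $S$, bounding the near part on $[0,S]$ by the factor $e^{-2\lambda_j^2(t-S)}$ that vanishes as $t\to\infty$, and the tail part by the smallness of $f$ together with $\int_0^\infty e^{-2\lambda_j^2\tau}\,d\tau=1/(2\lambda_j^2)<\infty$. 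This is where the hypothesis on $f$ must genuinely be used, and it is the step I expect to be the main obstacle: the split estimate forces the limiting variance to $0$ only if the amplitude $f(s)$ of the injected noise tends to $0$ as $s\to\infty$, so I would need to pin down exactly which behavior of the admissible $f$ at infinity guarantees this degeneration to $\mathcal{N}(\langle y^\delta,v_j\rangle/\lambda_j,0)$.
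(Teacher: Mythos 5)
Your argument is correct and follows essentially the same route as the paper, whose entire proof consists of citing the standard existence--uniqueness theory for linear SDEs, the mean-zero property of stochastic integrals, and the It\^o isometry; your variation-of-constants derivation of the closed form and the observation that a Wiener integral of a deterministic $L^2(0,t)$ integrand is centered Gaussian with variance given by the It\^o isometry are exactly the standard facts being invoked there. The one point you flag as the main obstacle --- that the variance degenerates only if $f(s)\to0$ as $s\to\infty$ --- is a legitimate diagnosis of an imprecision in the statement rather than a gap in your proof: taken literally, Definition~\ref{Index} makes an index function \emph{increasing} with limit $0$ at $0{+}$, so $f\in\mathcal{I}$ does not force $f(t)\to0$ at infinity, and by the L'H\^opital computation used later in the paper one has $\textup{Var}(\xi_j(t))\sim q_j[f(t)]^2\bigl(1-e^{-2\lambda_j^2t}\bigr)/(2\lambda_j^2)$ for large $t$, which vanishes in the limit precisely when $f(t)\to0$. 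The hypothesis $f\in\mathcal{I}$ is evidently being used loosely to mean that the noise amplitude decays at infinity (compare the proof of Theorem~\ref{RegularizationThm}, where the same hypothesis is invoked to conclude $[f(t)]^2\to0$, and the later standing assumption $f(t)=\mathcal{O}(\varphi(1/t))$ with $\varphi\in\mathcal{I}$); under that intended reading your split-at-$S$ estimate, or equivalently the asymptotic above, closes the final claim.
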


\begin{proof}
The existence and uniqueness of \eqref{SVDEq2} is a standard result for linear stochastic differential equations; see, e.g., \cite{DaPratoZabczyk2014}. \eqref{xiMean} is from the mean-zero property of stochastic integrals, and \eqref{xiVariance} is from the It\^o isometry.
\end{proof}

From Proposition \ref{SolutionODE} and the decomposition $x^\delta(t)=\sum_j \xi_j(t) u_j$ we obtain the explicit formula for the mild solution of (\ref{stochasticFlowLinear}):
\begin{equation}\label{BVPsv}
\begin{array}{ll}
x^\delta(t) = \sum\limits_j  e^{-\lambda^2_j t}\langle x_0,u_j \rangle u_j + \sum\limits_j \frac{1-e^{-\lambda^2_j t}}{\lambda_j} \langle y^\delta , v_j \rangle u_j +  \sum\limits_j \int^t_0 e^{-\lambda^2_j (t-s)} f(s) \langle d B_s, u_j \rangle u_j, \\
=: (1-A^* A g(t, A^* A))x_0 + g(t, A^* A) A^* y^\delta +  \int^t_0 e^{-A^* A (t-s)} f(s) d B_s,
\end{array}
\end{equation}
where (we identify $\lambda^2_j$ as $\lambda$)
\begin{eqnarray}\label{gPhiDef}
g(t, \lambda) = \frac{1-e^{-\lambda t}}{\lambda}.
\end{eqnarray}

\begin{theorem}\label{RegularizationThm}
Let $x^\delta(t)$ be the stochastic dynamic solution of (\ref{stochasticFlowLinear}) with $f(t)\in \mathcal{I}$. Then, if the terminating time $t^*=t^*(\delta,y^\delta)$ is chosen so that
\begin{equation}\label{ReguParameters}
\lim_{\delta\to 0} t^* =\infty \textrm{~and~}  \lim_{\delta\to 0} \delta \cdot t^* = 0,
\end{equation}
the $x^\delta(t^*)$ converges to $x^\dagger$ in the sense of mean square, i.e. $\mathbb{E} \|x^\delta(t) - x^\dagger\|^2\to0$, as $\delta\to0$.
\end{theorem}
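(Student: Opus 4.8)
The plan is to use the Gaussianity of the mild solution established in Proposition~\ref{SolutionODE} to split the mean-square error into a deterministic bias and a stochastic variance, and to control the two contributions independently. Working coordinate-wise in the singular system $\{\lambda_j;u_j,v_j\}$, writing $x^\delta(t)-x^\dagger=\sum_j(\xi_j(t)-\langle x^\dagger,u_j\rangle)u_j$ (legitimate since $x_0,x^\dagger\in N(A)^\perp$ and the stochastic integral stays in $N(A)^\perp$), Parseval's identity together with Tonelli's theorem for the nonnegative summands gives the bias--variance decomposition
\begin{equation*}
\mathbb{E}\|x^\delta(t)-x^\dagger\|^2=\sum_j\big|\mathbb{E}\xi_j(t)-\langle x^\dagger,u_j\rangle\big|^2+\sum_j\operatorname{Var}(\xi_j(t))=:\mathrm{bias}(t)+\mathrm{var}(t).
\end{equation*}
Each coordinate $\xi_j(t)$ is Gaussian by Proposition~\ref{SolutionODE}, so $\mathbb{E}|\xi_j-c|^2=|\mathbb{E}\xi_j-c|^2+\operatorname{Var}(\xi_j)$, which justifies the split. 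I would then estimate $\mathrm{bias}(t)$ and $\mathrm{var}(t)$ separately.

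For the bias I would observe that $\mathbb{E}\xi_j(t)$ is exactly the $j$-th coordinate of Showalter's deterministic asymptotic regularization applied to $y^\delta$. Using $\langle x^\dagger,u_j\rangle=\langle y,v_j\rangle/\lambda_j$ and $x_0\in N(A)^\perp$, each coordinate error rearranges as
\begin{equation*}
\mathbb{E}\xi_j(t)-\langle x^\dagger,u_j\rangle=e^{-\lambda_j^2 t}\langle x_0-x^\dagger,u_j\rangle+\frac{1-e^{-\lambda_j^2 t}}{\lambda_j}\langle y^\delta-y,v_j\rangle,
\end{equation*}
whence $\mathrm{bias}(t)\le 2\|e^{-A^*At}(x_0-x^\dagger)\|^2+2\sum_j\big(\tfrac{1-e^{-\lambda_j^2 t}}{\lambda_j}\big)^2|\langle y^\delta-y,v_j\rangle|^2$. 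The approximation part $\|e^{-A^*At}(x_0-x^\dagger)\|^2=\sum_j e^{-2\lambda_j^2 t}|\langle x_0-x^\dagger,u_j\rangle|^2$ tends to $0$ as $t\to\infty$ by dominated convergence (each mode decays, the envelope $|\langle x_0-x^\dagger,u_j\rangle|^2$ is summable, and the $N(A)$-modes vanish). For the propagated-noise part I would invoke the elementary uniform bound $\sup_{\lambda>0}\frac{1-e^{-\lambda^2 t}}{\lambda}=c_0\sqrt{t}$ with $c_0=\sup_{u>0}\frac{1-e^{-u^2}}{u}<\infty$, together with Bessel's inequality, to get the bound $c_0^2\,t\,\delta^2$; under \eqref{ReguParameters} this equals $c_0^2(\delta t^*)\delta\to 0$.

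The genuinely new and decisive step is the variance, where the structural hypotheses on $Q$ enter. From Proposition~\ref{SolutionODE}, $\mathrm{var}(t)=\sum_j q_j\int_0^t e^{-2\lambda_j^2(t-s)}[f(s)]^2\,ds$. The main obstacle is that the crude bound $[f(s)]^2\le\|f\|_\infty^2$ only yields $\mathrm{var}(t)\le\frac12\|f\|_\infty^2\operatorname{tr}(Q(A^*A)^{-1})$, which is finite but does \emph{not} vanish; so convergence of the variance genuinely requires the decay of the diffusion amplitude, namely $f(s)\to0$ as $s\to\infty$. The plan is a standard split: given $\varepsilon>0$ choose $T_\varepsilon$ with $[f(s)]^2<\varepsilon$ for $s>T_\varepsilon$, cut the $s$-integral at $T_\varepsilon$, bound the tail mode-wise by $\varepsilon/(2\lambda_j^2)$ (summing to $\frac\varepsilon2\operatorname{tr}(Q(A^*A)^{-1})$) and the head by $\|f\|_\infty^2 T_\varepsilon\,e^{-2\lambda_j^2(t-T_\varepsilon)}$ (summing to $\|f\|_\infty^2 T_\varepsilon\sum_j q_j e^{-2\lambda_j^2(t-T_\varepsilon)}$). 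The head sum tends to $0$ as $t\to\infty$ for fixed $T_\varepsilon$ because $\operatorname{tr}(Q)\le\|A\|^2\operatorname{tr}(Q(A^*A)^{-1})<\infty$, so $\{q_j\}$ is summable and dominated convergence applies. Letting $t=t^*\to\infty$ and then $\varepsilon\to0$ forces $\mathrm{var}(t^*)\to0$.

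Combining the two estimates and invoking the parameter choice \eqref{ReguParameters} then yields $\mathbb{E}\|x^\delta(t^*)-x^\dagger\|^2\to0$ as $\delta\to0$. I expect the variance analysis to be the crux: the commuting assumption $QA^*A=A^*AQ$ lets the variance operator be diagonalized in the same basis so the explicit mode-wise formula is available, and the trace-class condition $\operatorname{tr}(Q(A^*A)^{-1})<\infty$ is precisely what makes the accumulated stochastic perturbation summable; the decay of $f$ is what drives that finite bound to zero. By contrast, the bias part is essentially the classical Showalter convergence estimate and should be routine.
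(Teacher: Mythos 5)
Your proposal is correct and follows the same overall architecture as the paper's proof: the bias--variance decomposition \eqref{BiasVarianceDecomposition}, the classical Showalter estimate $\|\mathbb{E}x^\delta(t)-x^\dagger\|\le\|e^{-tA^*A}(x_0-x^\dagger)\|+\vartheta t^{1/2}\delta$ for the deterministic part, and the observation that the decay of $f$ at infinity together with $\operatorname{tr}(Q(A^*A)^{-1})<\infty$ kills the variance. (Both you and the paper read the hypothesis $f\in\mathcal{I}$ as forcing $f(t)\to0$ as $t\to\infty$, which is what the argument actually needs.) The one place where you genuinely diverge is the variance term: the paper invokes L'Hospital's rule to get the two-sided comparison \eqref{limitsIneq}, $\int_0^t e^{\lambda s}[f(s)]^2\,ds\le 2[f(t)]^2\int_0^t e^{\lambda s}\,ds$ for $t\ge T$, and then sums over $j$ --- note that this $T$ depends on $\lambda=2\lambda_j^2$, so summing requires a uniformity in $j$ that the paper leaves implicit. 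Your $\varepsilon$-splitting of the time integral at $T_\varepsilon$ (tail bounded mode-wise by $\varepsilon/(2\lambda_j^2)$, head by $\|f\|_\infty^2 T_\varepsilon e^{-2\lambda_j^2(t-T_\varepsilon)}$ with dominated convergence in $j$) reaches the same conclusion while sidestepping that uniformity issue entirely, at the cost of losing the explicit rate $\operatorname{var}(t)\lesssim[f(t)]^2$ that the paper's comparison provides and reuses later in Theorem~\ref{RegularizationThmRate}. Both routes are valid for the qualitative convergence claimed here.
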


\begin{proof}

According to the bias-variance decomposition for the mean-squared error, namely,
\begin{equation}\label{BiasVarianceDecomposition}
\mathbb{E} \|x^\delta(t) - x^\dagger\|^2 = \|\mathbb{E} x^\delta(t) - x^\dagger\|^2 + \mathbb{E} \|x^\delta(t) - \mathbb{E} x^\delta(t)\|^2,
\end{equation}
it is sufficient to find estimates for both bias error $\|\mathbb{E} x^\delta(t) - x^\dagger\|^2$ and variance error $\mathbb{E} \|x^\delta(t) - \mathbb{E} x^\delta(t)\|^2$, respectively.

The bias error is deterministic, and thus can be estimated by standard techniques in regularization theory, i.e. we have (let $r(t, \lambda) = 1- \lambda g(t, \lambda) = e^{-\lambda t}$)
\begin{equation}\label{IneqMainPf}
\begin{array}{ll}
\|\mathbb{E} x^\delta(t) - x^\dagger\| = \|r(t,A^* A) (x_0 - x^\dagger) +  g(t, A^* A) A^* (y^\delta-y) \| \\ \qquad
\leq  \|e^{-tA^* A} (x_0 - x^\dagger)\| + \vartheta t^{1/2} \delta,
\end{array}
\end{equation}
where we have used the inequalities
\begin{equation}\label{Ineq-g}
\|g(t, A^* A) A^* (y^\delta-y) \| \leq \sup_{\lambda\in(0,\|A\|^2)} \sqrt{\lambda} g(t, \lambda) \|y^\delta-y\| \leq \delta \sup_{\lambda\in(0,\|A\|^2)}  \frac{1-e^{-\lambda t}}{\sqrt{\lambda}} \leq \vartheta t^{1/2} \delta,
\end{equation}
where $\vartheta = \sup_{\lambda\in \mathbf{R}_+} \sqrt{\lambda} (\lambda - e^{-\lambda}) \approx 0.6382$~\cite{Vainikko1986}. Hence, with the choice of terminating time in (\ref{ReguParameters}), we conclude that
\begin{equation}\label{Convergence1}
\|\mathbb{E} x^\delta(t) - x^\dagger\| \to 0 \textrm{~as~} \delta \to 0.
\end{equation}

On the other hand, according to L'Hospital's rule, we get, for any $\lambda>0$,
\begin{equation}\label{limits}
\lim_{t\to \infty} \frac{\int_0^t e^{\lambda s} [f(s)]^2 ds}
{\int^t_0 e^{\lambda s} ds} =\lim_{t\to \infty} \frac{ e^{\lambda t}[f(t)]^2 }{e^{\lambda t}}
=\lim_{t\to \infty} [f(t)]^2.
\end{equation}

Hence, $T$ exists such that, for all $t\geq T$,
\begin{equation}\label{limitsIneq}
\frac{1}{2} [f(t)]^2 \int^t_0 e^{\lambda s} ds \leq  \int_0^t e^{\lambda s} [f(s)]^2 ds \leq 2 [f(t)]^2 \int^t_0 e^{\lambda s} ds.
\end{equation}

By using the It\^o isometry, we derive, together with the assumption $f(t)\in \mathcal{I}$, the assumption $tr(Q(A^*A)^{-1})<\infty$ and the inequality (\ref{limitsIneq}) such that, for $t\geq T$,
\begin{equation}\label{ConvergenceStochastic}
\begin{array}{ll}
\mathbb{E} \|x^\delta(t) - \mathbb{E} x^\delta(t)\|^2 = \mathbb{E} \left\| \int^t_0 e^{-A^* A (t-s)} f(s) d B_s \right\|^2 = \sum\limits^\infty\limits_{j=1} \int^t_0 q_j e^{-2\lambda^2_j (t-s)} [f(s)]^2   d s \\ \qquad = \sum\limits^\infty\limits_{j=1} q_j e^{-2\lambda^2_j t}  \int^t_0 e^{2\lambda^2_j s} [f(s)]^2 d s \leq  2 \sum\limits^\infty\limits_{j=1} q_j e^{-2\lambda^2_j t} [f(t)]^2  \int^t_0 e^{2\lambda^2_j s} d s \\ \qquad
= [f(t)]^2 \sum\limits^\infty\limits_{j=1} \frac{q_j}{\lambda^2_j} \left( 1 - e^{-2\lambda^2_j t} \right) \leq  tr(Q(A^*A)^{-1}) [f(t)]^2 \to 0
\end{array}
\end{equation}
as $t\to \infty$. We combine (\ref{Convergence1}) and (\ref{ConvergenceStochastic}) to obtain the convergence of the full error of our regularization method (\ref{stochasticFlowLinear}).
\end{proof}

\subsection{Convergence rates with noisy data under \emph{a priori} and \emph{a posteriori} stopping rules}
According to the standard argument in regularization theory \cite{Schock}, under the general assumptions of the previous sections, the rate of convergence of mean-square error $\mathbb{E} \|x^\delta(t(\delta)) - x^\dagger\|^2 \to 0$ as $\delta\to0$ can be arbitrarily slow for solutions $x^\dagger$ which are not smooth enough. In order to prove convergence rates, some smoothness assumptions imposed on the exact solution must be employed. This subsection focuses on the range-type source conditions (\ref{SourceCondition}), while in the next subsection we adopt the variational inequalities, which are more natural in the study of converse results. Therefore, we first consider the convergence-rate results under an \emph{a priori} stopping rule.

\begin{theorem}\label{RegularizationThmRate}
Let $x^\delta(t)$ be the stochastic dynamic solution of (\ref{stochasticFlowLinear}) with $f(t)= \mathcal{O} (\varphi(1/t))$. Then, under the source condition (\ref{SourceCondition}), if the terminating time is chosen as $t^*=\Theta^{-1}(\delta)$ with $\Theta(t)=t^{-1/2} \varphi(t^{-1})$, we have the convergence rate
\begin{equation*}
\mathbb{E} \|x^\delta(t^*) - x^\dagger\|^2 = \mathcal{O}([\varphi([\Theta^{-1}(\delta)]^{-1})]^2) \text{~as~} \delta\to0.
\end{equation*}

Consequently, if $\varphi=\varphi_p$, we have $\mathbb{E} \|x^\delta(t^*) - x^\dagger\|^2 = \mathcal{O}(\delta^{\frac{4p}{2p+1}})$; if $\varphi=\varphi_{\mu}$, we have $\mathbb{E} \|x^\delta(t^*) - x^\dagger\|^2 = \mathcal{O}(\log^{-2\mu}(\delta^{-1}))$.
\end{theorem}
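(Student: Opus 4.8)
The plan is to build on the bias--variance decomposition \eqref{BiasVarianceDecomposition} already exploited in the proof of Theorem \ref{RegularizationThm}, estimating the two pieces separately and then balancing them through the choice of $t^*$. Both ingredients are essentially available: the deterministic bias was bounded in \eqref{IneqMainPf} by $\|e^{-tA^*A}(x_0-x^\dagger)\| + \vartheta t^{1/2}\delta$, and the stochastic variance was bounded in \eqref{ConvergenceStochastic} by $\mathrm{tr}(Q(A^*A)^{-1})[f(t)]^2$. What remains is to convert these into powers of $\varphi(t^{-1})$ and $t^{1/2}\delta$ and to optimise over $t$.

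First, for the bias under the source condition \eqref{SourceCondition}, I would write $x_0 - x^\dagger = \varphi(A^*A)v$ and use the spectral calculus to get $\|e^{-tA^*A}(x_0 - x^\dagger)\| \le \rho \sup_{0<\lambda\le\|A\|^2} e^{-t\lambda}\varphi(\lambda)$. The key analytic step is to control this supremum using the defining property of the class $\mathcal{S}_{C_\sigma}$ in \eqref{Source2}: choosing a fixed $\sigma \in (0,1)$ (admissible for both $\varphi_p$ and $\varphi_\mu$ by Lemma \ref{HolderLogarithmic}), the bound $\varphi(\lambda) \le C_\sigma e^{\sigma t\lambda}\varphi(t^{-1})$ yields $e^{-t\lambda}\varphi(\lambda) \le C_\sigma e^{-(1-\sigma)t\lambda}\varphi(t^{-1}) \le C_\sigma \varphi(t^{-1})$, so that the bias is $\mathcal{O}(\varphi(t^{-1}) + t^{1/2}\delta)$.

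Second, since $f(t)=\mathcal{O}(\varphi(1/t))$, the estimate \eqref{ConvergenceStochastic} gives directly $\mathbb{E}\|x^\delta(t)-\mathbb{E}x^\delta(t)\|^2 = \mathcal{O}([\varphi(t^{-1})]^2)$ for all $t \ge T$; because $t^* = \Theta^{-1}(\delta) \to \infty$ as $\delta\to0$, this regime is reached for small $\delta$. Squaring the bias and adding the variance, the full mean-square error is $\mathcal{O}([\varphi(t^{-1})]^2 + t\delta^2)$. The two contributions are respectively decreasing and increasing in $t$, and balancing them, $[\varphi(t^{-1})]^2 \sim t\delta^2$, i.e.\ $t^{-1/2}\varphi(t^{-1}) \sim \delta$, is exactly the \emph{a priori} rule $\Theta(t^*)=\delta$. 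Substituting back produces the asserted rate $\mathcal{O}([\varphi([\Theta^{-1}(\delta)]^{-1})]^2)$.

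Finally, the two explicit rates follow by inverting $\Theta$. For $\varphi_p(\lambda)=\lambda^p$ one has $\Theta(t)=t^{-(2p+1)/2}$, so $t^* = \delta^{-2/(2p+1)}$ and $[\varphi_p((t^*)^{-1})]^2 = \delta^{4p/(2p+1)}$; for $\varphi_\mu(\lambda)=\log^{-\mu}(1/\lambda)$ one solves $t^{-1/2}\log^{-\mu} t = \delta$ asymptotically (the logarithm being a lower-order factor, $t^* \sim \delta^{-2}$ and $\log t^* \sim 2\log(\delta^{-1})$), giving $[\varphi_\mu((t^*)^{-1})]^2 = \mathcal{O}(\log^{-2\mu}(\delta^{-1}))$. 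I expect the main obstacle to be the bias supremum bound: forcing a negative exponent imposes $\sigma<1$, and one must confirm via Lemma \ref{HolderLogarithmic} that such $\sigma$ is admissible for the source functions of interest; the logarithmic case additionally requires a short asymptotic argument to invert $\Theta$ cleanly.
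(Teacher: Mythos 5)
Your argument is essentially the paper's own proof: the same bias--variance decomposition \eqref{BiasVarianceDecomposition}, the reuse of the bias bound \eqref{IneqMainPf} and the variance bound \eqref{ConvergenceStochastic} with $f(t)=\mathcal{O}(\varphi(1/t))$, and the balancing of $[\varphi(t^{-1})]^2$ against $t\delta^2$ via $\Theta(t^*)=\delta$. You merely supply details the paper leaves implicit, namely the qualification estimate $\sup_{\lambda}e^{-t\lambda}\varphi(\lambda)\le C_\sigma\varphi(t^{-1})$ from the class $\mathcal{S}_{C_\sigma}$ with $\sigma<1$ and the explicit inversion of $\Theta$ for $\varphi_p$ and $\varphi_\mu$, so the proposal is correct and follows the same route.
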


\begin{proof}
According to the proof of Theorem \ref{RegularizationThm} (see (\ref{IneqMainPf}) and (\ref{ConvergenceStochastic})), we find, together with the assumption $f(t)= \mathcal{O} (\varphi(1/t))$ and the relation $t^*(\delta)\to \infty$ as $\delta\to0$, that for a small-enough $\delta$ there exists a constant $C>0$ such that
\begin{equation}\label{Convergence2}
\begin{array}{ll}
\mathbb{E} \|x^\delta(t) - x^\dagger\|^2 = \|\mathbb{E} x^\delta(t) - x^\dagger\|^2 + \mathbb{E} \|x^\delta(t) - \mathbb{E} x^\delta(t)\|^2 \\ \qquad
\leq \left( \|e^{-tA^* A} (x_0 - x^\dagger)\| + \vartheta t^{1/2} \delta \right)^2 +  tr(Q(A^*A)^{-1}) [f(t)]^2 \\ \qquad
\leq  2C^2_1 \rho^2 [\varphi(1/t)]^2 + 2\vartheta^2 t \delta^2 + C^2 [\varphi(1/t)]^2,
\end{array}
\end{equation}
which yields the required result through the selection method of $t^*$.
\end{proof}

For the \emph{a posteriori} stopping rule, we consider the following stochastic version of Morozov's discrepancy principle, i.e. $t^*$ is chosen such that
\begin{equation}\label{fractionalDP}
t^*_i :=   \inf \left\{ t>0: \chi_i(t)<0  \right\}, \quad i=1,2,
\end{equation}
where
\begin{equation}\label{fractionalDP2}
\chi_1(t) := \|A \mathbb{E} x^\delta(t)-y^\delta\| - \tau \delta,
\end{equation}
and
\begin{equation}\label{fractionalDP2}
\chi_2(t) := \mathbb{E} \|A x^\delta(t)-y^\delta\|^2 - \tau \delta^2 \quad \text{with~} f(t)\in \mathcal{I},
\end{equation}
where we assume $\tau> 1$ for the occurring factor of the noise level $\delta$.

\begin{remark}
Clearly, $t^*_i$ is the root of $\chi_i(t)$, i.e. $\chi_i(t^*_i)=0$. The formulation (\ref{fractionalDP}) is more appropriate for a practical realization of SAR when $t^*$ is the first time point for which the size of the sample residual $\|A x^\delta(t)-y^\delta\|$ is approximately of the order of the data error.
\end{remark}

\begin{proposition}\label{Existence}
If $\|A x_0 - y^\delta\|>\tau \delta$, there always exists a unique $t^*_i$ ($i=1,2$), defined by (\ref{fractionalDP}).
\end{proposition}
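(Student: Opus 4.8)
The plan is to reduce everything to the singular system $\{\lambda_j; u_j, v_j\}$ and treat each discrepancy function $\chi_i$ as a scalar function of $t$, establishing (a) continuity, (b) its sign at $t=0$ and as $t\to\infty$, and (c) monotonicity. Starting from the spectral formulas for $\mathbb{E}\xi_j(t)$ and $\textup{Var}(\xi_j(t))$ in Proposition \ref{SolutionODE}, and using $\lambda_j\langle x_0,u_j\rangle=\langle Ax_0,v_j\rangle$, I would first write
\[
\|A\mathbb{E}x^\delta(t)-y^\delta\|^2=\sum_j e^{-2\lambda_j^2 t}\langle Ax_0-y^\delta,v_j\rangle^2+\|(I-P)y^\delta\|^2,
\]
where $P$ is the orthogonal projection onto $\overline{\mathcal{R}(A)}$, and similarly
\[
\mathbb{E}\|Ax^\delta(t)-y^\delta\|^2=\|A\mathbb{E}x^\delta(t)-y^\delta\|^2+\sum_j \lambda_j^2 q_j\int_0^t e^{-2\lambda_j^2(t-s)}[f(s)]^2\,ds,
\]
the second identity following from the bias--variance split $Ax^\delta(t)-y^\delta=(A\mathbb{E}x^\delta(t)-y^\delta)+A(x^\delta(t)-\mathbb{E}x^\delta(t))$ and the mean-zero property of the stochastic term. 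Thus $\chi_1$ depends only on the (deterministic) mean and not on $f$, while $\chi_2$ carries the extra variance integral; the $t$-dependence is governed by the exponentials $e^{-2\lambda_j^2 t}$, so continuity on $[0,\infty)$ follows from locally uniform convergence of the series.

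Next I evaluate the endpoints. At $t=0$ we have $x^\delta(0)=x_0$ deterministically, so $\chi_1(0)=\|Ax_0-y^\delta\|-\tau\delta>0$ by hypothesis, and $\chi_2(0)=\|Ax_0-y^\delta\|^2-\tau\delta^2>0$ since $\tau>1$ forces $\|Ax_0-y^\delta\|^2>\tau^2\delta^2>\tau\delta^2$. For $t\to\infty$ the crucial point is that $y=Ax^\dagger\in\mathcal{R}(A)$, so $(I-P)y=0$ and $\|(I-P)y^\delta\|=\|(I-P)(y^\delta-y)\|\le\delta$; since every $e^{-2\lambda_j^2 t}\to0$, the mean-residual tends to $\|(I-P)y^\delta\|\le\delta<\tau\delta$, giving $\chi_1(\infty)<0$. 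For $\chi_2$ I would additionally invoke the variance estimate already used in (\ref{ConvergenceStochastic}): the variance term is controlled by $\|A\|^2\,\mathrm{tr}(Q(A^*A)^{-1})[f(t)]^2$ and vanishes as $t\to\infty$, whence $\chi_2(\infty)=\|(I-P)y^\delta\|^2-\tau\delta^2\le\delta^2-\tau\delta^2<0$.

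With continuity and the sign change in hand, the set $\{t>0:\chi_i(t)<0\}$ is nonempty, so $t^*_i=\inf\{\cdot\}$ is a finite, strictly positive number (positivity because $\chi_i(0)>0$ and continuity keep $\chi_i>0$ on a right-neighbourhood of $0$), and by continuity $\chi_i(t^*_i)=0$, which is exactly the root property stated in the Remark. To see that $t^*_i$ is the unique crossing I would prove strict monotonicity: for $\chi_1$ this is immediate from differentiating the spectral sum, whose derivative $-2\sum_j\lambda_j^2 e^{-2\lambda_j^2 t}\langle Ax_0-y^\delta,v_j\rangle^2$ is strictly negative once we observe that $\|Ax_0-y^\delta\|>\tau\delta\ge\|(I-P)y^\delta\|$ forces at least one coefficient $\langle Ax_0-y^\delta,v_j\rangle$ to be nonzero.

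I expect the main obstacle to be the monotonicity of $\chi_2$: unlike $\chi_1$, differentiating its variance integral produces the \emph{positive} contribution $[f(t)]^2\sum_j\lambda_j^2 q_j$, which competes with the strictly negative derivative of the bias part. Handling this cleanly requires either showing that the bias decrease dominates or exploiting the decay of $f$ so that the variance contribution is subordinate — and this is precisely where the behaviour of $f$ at infinity enters, since the variance term must vanish as $t\to\infty$ for $\chi_2(\infty)<0$ to hold at all. If strict monotonicity of $\chi_2$ proves delicate, I would fall back on the weaker but sufficient statement that $t^*_2$, defined as an infimum, is automatically a single well-defined value that is a genuine zero of $\chi_2$ by continuity, which already secures the existence and the root property claimed.
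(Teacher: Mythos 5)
Your proposal is correct and follows essentially the same route as the paper: continuity of $\chi_i$, positivity at $t=0$ from the hypothesis $\|Ax_0-y^\delta\|>\tau\delta$, negativity in the limit $t\to\infty$ (the paper gets this from $A\mathbb{E}x^\delta(t)-y^\delta = Ae^{-tA^*A}(x_0-x^\dagger)+e^{-tAA^*}(y-y^\delta)$ with $\|e^{-tAA^*}(y-y^\delta)\|\le\delta$, you from the spectral sum with limit $\|(I-P)y^\delta\|\le\delta<\tau\delta$), and Bolzano's theorem. You are in fact more explicit than the paper, which dismisses the $\chi_2$ case and uniqueness with ``similar reasoning''; your observations that the variance term must vanish as $t\to\infty$ for $\chi_2(\infty)<0$, and that uniqueness of $t^*_i$ as an infimum does not require strict monotonicity of $\chi_2$, are exactly the right way to close those gaps.
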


\begin{proof}
From the explicit formula of $x^\delta(t)$ (see \eqref{BVPsv}), we derive
\begin{equation*}
\begin{array}{ll}
\chi_1(t) &= \|A \mathbb{E} x^\delta(t)-y^\delta\| - \tau  \delta = \left\| A e^{-t A^*A} (x_0 -x^\dagger) + e^{-t AA^*} (y-y^\delta) \right\| - \tau  \delta  \\ &
\leq \left\| e^{-t AA^*} A (x_0 -x^\dagger) \right\| - (\tau -1)  \delta  \to -(\tau -1)  \delta <0
\end{array}
\end{equation*}
as $t\to \infty$. The continuity of $\chi_1(t)$ is clear as we are dealing with the linear problem.  Since $\chi(0)= \|A x_0 - y^\delta\|- \tau  \delta>0$, the existence of the root of $\chi(t)$ follows from Bolzano's theorem. Using similar reasoning, we can obtain the existence and uniqueness of $t^*_2$.
\end{proof}

We are now able to provide the convergence-rate results of SAR under the \emph{a posteriori} stopping rule (\ref{fractionalDP}).

\begin{theorem}\label{ThmPosteriori}
Let the terminating time $t^*=t^*(\delta,y^\delta)$ of SAR (\ref{stochasticFlowLinear}) with $f(t)= \mathcal{O} (\varphi(1/t))$ be chosen according to the stopping rule \eqref{fractionalDP}. Then, we have the following convergence-rate results: \\

(i) Under the H\"{o}lder-type source conditions $\varphi_p$:
\begin{equation}\label{ErrorEstimatePostrioriT}
t^* = \mathcal{O} \left( \delta^{-\frac{2}{2p+1}} \right) \qquad \mbox{and}   \qquad  \mathbb{E} \| x^\delta(t^*) - x^\dagger \|^2 = \mathcal{O} \left( \delta^{\frac{4p}{2p+1}}  \right).
\end{equation}

(ii) Under the logarithmic source conditions $\varphi_\mu$:
\begin{equation}\label{ErrorEstimatePostrioriTlogarithmic}
t^* = o \left( \delta^{-\frac{2}{2p+1}} \log^{-\frac{2}{2p+1}}(\delta^{-1}) \right) \quad \mbox{and}   \quad  \mathbb{E} \| x^\delta(t^*) - x^\dagger \|^2  = \mathcal{O}\left(  \log^{-2\mu}(\delta^{-1}) \right).
\end{equation}
\end{theorem}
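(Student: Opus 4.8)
The plan is to deduce the a~posteriori rates from the a~priori estimate of Theorem~\ref{RegularizationThmRate} by showing that the data-driven stopping time $t^*$ is, up to constants, of the optimal order $\delta^{-2/(2p+1)}=\Theta^{-1}(\delta)$, and then to substitute this into the bias--variance decomposition~\eqref{BiasVarianceDecomposition}. Throughout I would work in the singular system, writing the bias as $\mathbb{E}x^\delta(t)-x^\dagger=e^{-tA^*A}(x_0-x^\dagger)+g(t,A^*A)A^*(y^\delta-y)$ and treating the noise-free residual $\|A e^{-tA^*A}(x_0-x^\dagger)\|$ as the quantity the stopping functionals effectively monitor. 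The two rules are close: by the It\^o isometry, $\chi_2(t)=\|A\mathbb{E}x^\delta(t)-y^\delta\|^2+\mathbb{E}\|A(x^\delta(t)-\mathbb{E}x^\delta(t))\|^2-\tau\delta^2$ exceeds the squared mean-discrepancy only by a residual-variance term that is itself $\mathcal{O}([f(t)]^2)$, so much of the H\"older analysis can be run for $\chi_1$ and $\chi_2$ in parallel.

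First I would pin down the upper bound $t^*_1=\mathcal{O}(\delta^{-2/(2p+1)})$ for the mean-discrepancy rule $\chi_1$. Since $A e^{-tA^*A}=e^{-tAA^*}A$ and $A(x_0-x^\dagger)=(AA^*)^pAv$ under $\varphi=\varphi_p$, the noise-free residual satisfies $\|A e^{-tA^*A}(x_0-x^\dagger)\|\le\rho\sup_{\lambda>0}\lambda^{p+1/2}e^{-\lambda t}=\mathcal{O}(\rho\,t^{-(p+1/2)})$, the same estimate behind~\eqref{Ineq-g}. At $\bar t=c\,\delta^{-2/(2p+1)}$ this residual drops below $(\tau-1)\delta$, and after splitting off the noise term (bounded by $\delta$) one gets $\chi_1(\bar t)\le0$, so the root obeys $t^*_1\le\bar t$. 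This half uses only the source condition and the filter estimates already recorded in the proof of Theorem~\ref{RegularizationThm}.

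For the bias at $t^*$ I would not estimate the two summands in isolation but invoke the moment inequality $\|(A^*A)^p w\|\le\|w\|^{1/(2p+1)}\|(A^*A)^{p+1/2}w\|^{2p/(2p+1)}$ with $w=e^{-t^*A^*A}v$. Because $\|(A^*A)^{p+1/2}w\|=\|A e^{-t^*A^*A}(x_0-x^\dagger)\|$ and $\|w\|\le\rho$, the smoothness summand $\|e^{-t^*A^*A}(x_0-x^\dagger)\|$ is controlled by the monitored residual (equal to $\tau\delta$ for $\chi_1$ and $\le\sqrt{\tau}\,\delta$ for $\chi_2$) through $\|A e^{-t^*A^*A}(x_0-x^\dagger)\|\le\|A\mathbb{E}x^\delta(t^*)-y^\delta\|+\|y^\delta-y\|+\|A g(t^*,A^*A)A^*(y^\delta-y)\|\le(\tau+2)\delta$, giving $\mathcal{O}(\rho^{1/(2p+1)}\delta^{2p/(2p+1)})$ with \emph{no} lower bound on $t^*$ needed. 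The noise-propagation summand $\|g(t^*,A^*A)A^*(y^\delta-y)\|\le\vartheta(t^*)^{1/2}\delta$ is then $\mathcal{O}(\delta^{2p/(2p+1)})$ by the upper bound in hand, so $\|\mathbb{E}x^\delta(t^*)-x^\dagger\|^2=\mathcal{O}(\delta^{4p/(2p+1)})$.

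The hard part will be the variance $\mathbb{E}\|x^\delta(t^*)-\mathbb{E}x^\delta(t^*)\|^2\le\mathrm{tr}(Q(A^*A)^{-1})[f(t^*)]^2=\mathcal{O}([\varphi_p(1/t^*)]^2)$, which \emph{decreases} in $t$ and is therefore governed by a matching \emph{lower} bound $t^*\gtrsim\delta^{-2/(2p+1)}$ that the upper-bound argument does not provide. For $\chi_1$ I would secure this by showing $\chi_1(\underline t)\ge0$ at $\underline t=c'\delta^{-2/(2p+1)}$ --- the monitored residual cannot have reached $\tau\delta$ too early --- which is exactly where a tight (non-over-smooth) source condition enters. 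The rule $\chi_2$ instead controls the variance for free: $\chi_2(t^*)=0$ forces $\mathbb{E}\|A(x^\delta(t^*)-\mathbb{E}x^\delta(t^*))\|^2\le\tau\delta^2$, and with the lower It\^o-isometry estimate $\mathbb{E}\|A(x^\delta(t)-\mathbb{E}x^\delta(t))\|^2\gtrsim[f(t)]^2$ for large $t$ (the lower half of~\eqref{limitsIneq}) this yields $[f(t^*)]^2\lesssim\delta^2$, whence the solution variance is $\mathcal{O}(\delta^2)=o(\delta^{4p/(2p+1)})$ and is absorbed into the bias; the price is that for $\chi_2$ the \emph{upper} bound on $t^*$, and hence the noise-propagation estimate, must be re-derived so as to dominate the residual-variance term, which is the reciprocal obstacle. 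Balancing bias against variance then gives the H\"older rate. Finally, for the logarithmic source condition I would avoid recomputation and use that $\varphi_\mu$ is covered by $\psi(\lambda)=\lambda$ (Lemma~\ref{HolderLogarithmic}(ii) via Proposition~\ref{pro:covered}(iii)): the covering transfers the residual decay and the bound on $t^*$ from the H\"older template, while inserting the logarithmic smoothness into the interpolation step converts the $\mathcal{O}(\delta)$ residual into the rate $\mathcal{O}(\log^{-2\mu}(\delta^{-1}))$.
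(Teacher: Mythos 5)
Your proposal follows the paper's own route in all the substantive steps for the H\"older/$\chi_1$ case: the upper bound $t^*_1=\mathcal{O}(\delta^{-2/(2p+1)})$ comes from playing the residual decay $\|(A^*A)^{p+1/2}e^{-tA^*A}v\|\le \rho C_1 t^{-(2p+1)/2}$ against the threshold $(\tau-1)\delta$ exactly as in \eqref{PosterioriProofIneq2Log}--\eqref{PfQualificationIneq}; the bias is controlled by the same interpolation (moment) inequality applied to $w=e^{-t^*A^*A}v$ combined with $\chi_1(t^*_1)=0$, as in \eqref{PosterioriProofIneq1}--\eqref{PosterioriProofIneq3}; and the noise propagation is absorbed via $\vartheta (t^*)^{1/2}\delta$ and the bound on $t^*$. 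Where you add something is the variance term $\mathrm{tr}(Q(A^*A)^{-1})[f(t^*)]^2$: you are right that, since $f(t)=\mathcal{O}(\varphi(1/t))$ decreases in $t$, bounding this by $\mathcal{O}(\delta^{4p/(2p+1)})$ needs a matching \emph{lower} bound $t^*\gtrsim\delta^{-2/(2p+1)}$ that the upper-bound argument does not deliver. The paper's proof is silent on this point --- its final display simply inserts $C[\varphi_p(\delta^{2/(2p+1)})]^2$ for that term --- so your concern identifies a real soft spot in the published argument rather than a defect of your own. Your two remedies differ in quality: for $\chi_2$, extracting $[f(t^*_2)]^2\lesssim\delta^2$ from $\chi_2(t^*_2)=0$ together with the lower half of \eqref{limitsIneq} is self-contained (modulo checking $t^*_2\ge T$) and arguably tightens the paper; for $\chi_1$, proving $\chi_1(\underline t)\ge 0$ at $\underline t=c'\delta^{-2/(2p+1)}$ requires a lower bound on the noise-free residual, i.e.\ a tightness assumption on the source condition that is not among the theorem's hypotheses, so that branch remains open in your write-up exactly as it does in the paper's. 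Finally, the paper dismisses the logarithmic case with ``can be proved in a similar way''; your plan to transfer the H\"older template via the covering property of Proposition \ref{pro:covered} and Lemma \ref{HolderLogarithmic}(ii) is a sensible way to make that remark precise, though the interpolation step for $\varphi_\mu$ would still need to be written out.
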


\begin{proof}
We show only the convergence-rate results under the H\"{o}lder-type source conditions and the stopping rule \eqref{fractionalDP} with discrepancy function $\chi_1(t)$. Other cases (i.e. the case of the stopping rule \eqref{fractionalDP} with $\chi_2(t)$ and the case with the logarithmic source condition) can be proved in a similar way. To that end, using the interpolation inequality $\|B^p u\| \leq \|B^q u\|^{p/q} \|u\|^{1-p/q}$ and the source conditions $x_0 - x^\dagger = (A^*A)^p v$, we deduce that
\begin{eqnarray}\label{PosterioriProofIneq1}
\begin{array}{ll}
\| e^{-tA^*A} (x_0 -  x^\dagger) \|
= \| (A^*A)^{p} e^{-tA^*A} v \| \\ \quad
\leq \| (A^*A)^{(p+1/2)} e^{-tA^*A} v \|^{2p/(2p+1)} \cdot \| e^{-tA^*A} v\|^{1/(2p+1)} \\ \quad
= \| A e^{-tA^*A} (x_0 -  x^\dagger)\|^{2p/(2p+1)} \cdot \| e^{-tA^*A} v\|^{1/(2p+1)}.
\end{array}
\end{eqnarray}
Since $t^*_1$ is chosen according to the equation $\chi_1(t)=0$, we derive
\begin{eqnarray}\label{PosterioriProofIneq2}
\begin{array}{ll}
\tau  \delta = \|A \mathbb{E} x^\delta(t^*_1)-y^\delta\| \\ \qquad
= \left\| A e^{-t^*_1A^*A} (x_0 -x^\dagger) + e^{-t^*_1A^*A} (y-y^\delta) \right\| \\ \qquad
\geq \| A e^{-t^*_1A^*A} (x_0 -x^\dagger) \| - \|e^{-t^*_1A^*A} (y-y^\delta) \|.
\end{array}
\end{eqnarray}
Now we combine the estimates (\ref{PosterioriProofIneq1}) and (\ref{PosterioriProofIneq2}) to obtain, with the source conditions,
\begin{eqnarray}\label{PosterioriProofIneq3}
\begin{array}{ll}
\| e^{-t^*_1A^*A} (x_0 -x^\dagger)\|
\leq \| A e^{-t^*_1A^*A} (x_0 -x^\dagger)\|^{2p/(2p+1)} \cdot \| e^{-t^*_1A^*A} v\|^{1/(2p+1)} \\ \quad
\leq \left( \tau  \delta + \|e^{-t^*_1A^*A} (y^\delta-y) \| \right)^{2p/(2p+1)} \rho^{1/(2p+1)} \\ \quad
\leq c_1 \rho^{1/(2p+1)} \delta^{2p/(2p+1)}
\end{array}
\end{eqnarray}
where $c_1:= \left( \tau + 1 \right)^{2p/(2p+1)}$.

On the other hand, in a similar fashion to (\ref{PosterioriProofIneq2}), it is easy to show that
\begin{equation}\label{PosterioriProofIneq2Log}
\begin{array}{ll}
\tau \delta &= \|A \mathbb{E} x^\delta(t^*_1)-y^\delta\| = \| e^{-t^*_1AA^*}A(x_0 -x^\dagger) + e^{-t^*_1A^*A}(y-y^\delta) \| \\ &
\leq \| A e^{-t^*_1A^*A} (x_0 -x^\dagger)\| + \|e^{-t^*_1A^*A} (y-y^\delta) \| \\ &
\leq \| A e^{-t^*_1A^*A} (x_0 -x^\dagger) \| + \delta.
\end{array}
\end{equation}
If we combine the above inequality with the source conditions (\ref{SourceCondition}) with $\varphi=\varphi_p$, we obtain
\begin{eqnarray}\label{PfQualificationIneq}
(\tau -1)\delta \leq \| A e^{-t^*_1A^*A} (y-y^\delta)\|
\leq  \| (A^*A)^{p+1/2} e^{-t^*_1A^*A} v \| \leq \rho C_1 (t^*_1)^{-(2p+1)/2},
\end{eqnarray}
which yields the estimate for $t^*_1$ in (\ref{ErrorEstimatePostrioriT}).
Finally, according to the proof of Theorem \ref{RegularizationThm} (cf. (\ref{IneqMainPf})), the estimate for $t^*$, and (\ref{PosterioriProofIneq3}), we conclude that a constant $C>0$ exists such that, for a small-enough $\delta$,
\begin{eqnarray*}
\begin{array}{ll}
\mathbb{E} \| x^\delta(t^*) - x^\dagger \|^2 \leq \| e^{-t^*_1A^*A} (x_0 -x^\dagger) \|^2 + \vartheta^2 t^*_1 \delta^2 + tr(Q(A^*A)^{-1}) [f(t^*_1)]^2 \\ \qquad
\leq c^2_1 \rho^{2/(2p+1)} \delta^{4p/(2p+1)} + \vartheta^2  \left( \frac{\rho C_1}{\tau -1} \right)^{2/(2p+1)} \delta^{4p/(2p+1)} + C \left[\varphi_{p}\left( \delta^{\frac{2}{2p+1}} \right) \right]^2,
\end{array}
\end{eqnarray*}
which completes the proof.
\end{proof}

\subsection{Convergence rates with exact data and converse results}

LetDefine the \emph{spectral tail} be defined by
\begin{equation}\label{tail}
\omega(\lambda) = \|E_{[0,\lambda]} (x_0 - x^\dagger)\|^2,
\end{equation}
where the mapping $\Theta \mapsto E_{\Theta}$ means the spectral measure of $A^*A$ on Borel set $\Theta\subset [0,\infty)$. Then, we have the representation
\begin{equation}\label{representation}
\|\mathbb{E} x(t)-x^{\dagger}\|^{2} = \int^{\|A\|^2}_0 e^{-\lambda t} d \omega(\lambda),
\end{equation}
where $x(t)$ represents the regularized stochastic solution $x(t)$ of (\ref{stochasticFlowLinear}) with exact data $y$. In this subsection, we establish an equivalent relation between the spectral tail $\omega(\cdot)$ and the convergence rate of $x(t)$ to the $x_0$-minimum norm solution $x^\dagger$.

\begin{theorem}
Let $\varphi\in \mathcal{S}_{C_\sigma}$ with $\sigma\in(0,1)$. Then, the following two statements are equivalent:
\begin{itemize}
         \item[(i)] There exists a constant $C_3 > 0$ with
         \begin{equation}\label{eqhdy8}
         \left\|\mathbb{E} x(t)-x^{\dagger}\right\|^{2} \leq C_3 \varphi(1/t)\quad \text{for all } t >0.
         \end{equation}
         \item[(ii)] There exists a constant $C_4 > 0$ with
	 \begin{equation}\label{eqhdy9}
	 \omega(\lambda) \leq C_4 \varphi(\lambda) \quad \text{for all } \lambda >0.
	 \end{equation}
\end{itemize}
Moreover, if $f(t)= C_\varphi \sqrt{\varphi(1/t)}$ ($C_\varphi$ is a positive number independent of $t$), every one of the above statements is also equivalent to the following one:
\begin{itemize}
         \item[(iii)] There exists a constant $C_5 > 0$ with
         \begin{equation}\label{eqhdy8new}
         \mathbb{E} \left\|x(t)-x^{\dagger}\right\|^{2} \leq C_5 \varphi(1/t)\quad \text{for all } t >0.
         \end{equation}
\end{itemize}
\end{theorem}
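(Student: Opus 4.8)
The plan is to close the cycle of implications by first treating the equivalence (i)$\Leftrightarrow$(ii) through the spectral representation \eqref{representation}, and then to bring (iii) into the picture via the bias--variance decomposition \eqref{BiasVarianceDecomposition} together with the variance estimate already established in Theorem~\ref{RegularizationThm}. Throughout, the membership $\varphi\in\mathcal{S}_{C_\sigma}$ with $\sigma\in(0,1)$ is the crucial structural input, since it supplies the exponential comparison $\varphi(\lambda)\le C_\sigma\,\varphi(1/t)\,e^{\lambda t\sigma}$ valid on $(0,\|A\|^2]$, and the restriction $\sigma<1$ is precisely what will make the relevant exponential integral converge.

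For the direction (ii)$\Rightarrow$(i) I would start from $\|\mathbb{E}x(t)-x^\dagger\|^2=\int_0^{\|A\|^2}e^{-\lambda t}\,d\omega(\lambda)$ and integrate by parts (summation by parts, as $\omega$ is a nondecreasing step function with $\omega(0)=0$ because $x_0-x^\dagger\in N(A)^\bot$), obtaining $e^{-\|A\|^2 t}\omega(\|A\|^2)+t\int_0^{\|A\|^2}e^{-\lambda t}\omega(\lambda)\,d\lambda$. Substituting $\omega(\lambda)\le C_4\varphi(\lambda)$ and then the source-class estimate converts the integrand into $e^{-\lambda t(1-\sigma)}$, so that $t\int_0^{\|A\|^2}e^{-\lambda t(1-\sigma)}\,d\lambda\le (1-\sigma)^{-1}$, while the same comparison controls the boundary term via $e^{-\|A\|^2 t(1-\sigma)}\le 1$. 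Both contributions are then bounded by constant multiples of $\varphi(1/t)$, giving (i) with $C_3=C_4C_\sigma\bigl(1+(1-\sigma)^{-1}\bigr)$. I expect this to be the main obstacle: it is exactly here that $\sigma<1$ is indispensable, since $1-\sigma>0$ is what renders the high-frequency integral integrable and independent of the cutoff $\|A\|^2$, and the boundary term at $\lambda=\|A\|^2$ must likewise be re-expressed in terms of $\varphi(1/t)$ rather than a fixed constant.

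The reverse implication (i)$\Rightarrow$(ii) is the soft direction. For a fixed $\lambda>0$ I would evaluate \eqref{representation} at the particular time $t=1/\lambda$ and restrict the integral to $[0,\lambda]$, where $e^{-\mu/\lambda}\ge e^{-1}$. This yields $e^{-1}\omega(\lambda)\le\int_0^{\lambda}e^{-\mu/\lambda}\,d\omega(\mu)\le\int_0^{\|A\|^2}e^{-\mu/\lambda}\,d\omega(\mu)\le C_3\varphi(\lambda)$, hence (ii) with $C_4=eC_3$; for $\lambda>\|A\|^2$ the claim is automatic since $\omega$ is then constant and $\varphi$ is increasing.

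Finally, for the equivalence with (iii) under $f(t)=C_\varphi\sqrt{\varphi(1/t)}$, I would invoke \eqref{BiasVarianceDecomposition} to split $\mathbb{E}\|x(t)-x^\dagger\|^2$ into the bias term governed by (i) and the variance term $\mathbb{E}\|x(t)-\mathbb{E}x(t)\|^2$. The implication (iii)$\Rightarrow$(i) is immediate from nonnegativity of the variance, which gives $\|\mathbb{E}x(t)-x^\dagger\|^2\le\mathbb{E}\|x(t)-x^\dagger\|^2\le C_5\varphi(1/t)$. For (i)$\Rightarrow$(iii) I would use the variance bound from \eqref{ConvergenceStochastic}, namely $\mathbb{E}\|x(t)-\mathbb{E}x(t)\|^2\le\mathrm{tr}(Q(A^*A)^{-1})[f(t)]^2=\mathrm{tr}(Q(A^*A)^{-1})\,C_\varphi^2\,\varphi(1/t)$, and add it to the bias bound to obtain (iii) with $C_5=C_3+\mathrm{tr}(Q(A^*A)^{-1})C_\varphi^2$. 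The one point demanding care is that the variance bound in \eqref{ConvergenceStochastic} was derived only for $t$ beyond a threshold $T$; to justify the claim for all $t>0$ I would verify that on the remaining range the bias, variance, and $\varphi(1/t)$ stay mutually comparable, so that the constant $C_5$ can be enlarged to absorb the small-time regime.
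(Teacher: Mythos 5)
Your proposal is correct and follows essentially the same route as the paper: (i)$\Leftrightarrow$(ii) via the spectral representation \eqref{representation}, integration by parts, and the class estimate $\varphi(\lambda)\le C_\sigma\varphi(1/t)e^{\lambda t\sigma}$, followed by (i)$\Leftrightarrow$(iii) via the bias--variance decomposition \eqref{BiasVarianceDecomposition} with the variance bound $\mathrm{tr}(Q(A^*A)^{-1})[f(t)]^2=\mathrm{tr}(Q(A^*A)^{-1})C_\varphi^2\varphi(1/t)$. The only substantive deviation is in (ii)$\Rightarrow$(i), where you apply the class estimate on all of $(0,\|A\|^2]$ and control the boundary term by $\omega(\|A\|^2)\le C_4\varphi(\|A\|^2)$ together with $e^{-\|A\|^2t(1-\sigma)}\le 1$; this is a mild streamlining of the paper's argument, which instead splits the integral at $\lambda=1/t$, uses monotonicity of $\omega$ on $[0,1/t]$, and handles $e^{-\|A\|^2t}$ by a separate estimate after reducing to $t>1/\|A\|^2$ --- and your caveat about the large-$t$ threshold in the variance bound is warranted, since the paper likewise asserts the (i)$\Leftrightarrow$(iii) inequalities only for large enough $t$.
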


\begin{proof}
According to the definitions of $x(t)$ and $\omega(\lambda)$ and the relation (\ref{representation}), for all $t >0$,
\begin{equation}\label{eqhdy10}
\omega(1/t)=\int_{0}^{1/t} \mathrm{d} \omega(\lambda) \leq e \int_{0}^{1/t} e^{-\lambda t} d \omega(\lambda) \leq e \left\|\mathbb{E} x(t)-x^{\dagger}\right\|^{2}.
\end{equation}
\par
Let fFirst, let (\ref{eqhdy8}) hold. Then, (\ref{eqhdy9}) holds with $C_4=e C_3$, through theby combining of (\ref{eqhdy8}) and (\ref{eqhdy10}).

Conversely, let (\ref{eqhdy9}) hold. Since $\left\|\mathbb{E} x(t)-x^{\dagger}\right\|^{2} \leq\left\|x_0 - x^{\dagger}\right\|^{2}$ (which follows from the representation (\ref{representation})), it is sufficientenough to check the condition (\ref{eqhdy8}) for all $t > 1/\|A\|^{2}$.

Integrating the right- hand side by parts in (\ref{representation}), we obtain that
\begin{equation}\label{eqhdy12}
\left\|\mathbb{E} x(t)-x^{\dagger}\right\|^{2} = e^{-\|A\|^{2}t} \omega\left(\|A\|^{2}\right) + t\int_{0}^{\|A\|^{2}} \omega(\lambda) e^{-\lambda t} d\lambda
\end{equation}

Note that
\begin{equation}\label{eqhdy13}
t \int_{0}^{1/t} \omega(\lambda) e^{-\lambda t} d\lambda \leq t \omega(1/t) \int_{0}^{1/t} e^{-\lambda t} d\lambda = (1-e^{-1})\omega(1/t) \leq C_4(1-e^{-1})\varphi(1/t),
\end{equation}
where we used the assumption (\ref{eqhdy9}) and the monotonic increasing of $\omega$. On the other hand, the assumptions $\varphi\in \mathcal{S}_{C_\sigma}$ and (\ref{eqhdy9}) imply that
\begin{equation}\label{eqhdy13b}
\begin{aligned}
t & \int_{1/t}^{\|A\|^{2}} \omega(\lambda) e^{-\lambda t} d\lambda  \leq  C_4 t \int_{1/t}^{\|A\|^{2}} \varphi(\lambda) e^{-\lambda t} d\lambda \\
&= C_4 t \int_{1/t}^{\|A\|^{2}} \varphi(\lambda) e^{-\lambda t \sigma} e^{-\lambda t(1-\sigma)} d\lambda \leq C_\sigma C_4 \varphi(1/t) t \int_{1/t}^{\|A\|^{2}} e^{-\lambda t(1-\sigma)} d\lambda  \\
& \leq \frac{C_\sigma C_4}{(1-\sigma)e^{1-\sigma}} \varphi(1/t).
\end{aligned}
\end{equation}
Inserting (\ref{eqhdy13}) and (\ref{eqhdy13b}) into (\ref{eqhdy12}), we find that, with $\omega\left(\|A\|^{2}\right)=\left\|x_0 - x^{\dagger}\right\|^{2}$, that
\begin{equation}\label{eqhdy14}
\left\|\mathbb{E} x(t)-x^{\dagger}\right\|^{2} \leq e^{-\|A\|^{2}t} \left\|x_0 - x^{\dagger}\right\|^{2} + C_4 (1-e^{-1}) \varphi(1/t) + \frac{C_\sigma C_4}{(1-\sigma)e^{1-\sigma}} \varphi(1/t)
\end{equation}
From the definition of $\mathcal{S}_{C_\sigma}$, we deduce further that
\begin{equation*}
e^{-\|A\|^{2}t}  \leq \frac{C_\sigma^{1/\sigma}}{\varphi^{1/\sigma}\left(\|A\|^{2}\right)} \varphi^{1/\sigma}(1/t) = \frac{C_\sigma^{1/\sigma}\varphi^{1/\sigma-1}(1/t)}{\varphi^{1/\sigma}\left(\|A\|^{2}\right)} \varphi(1/t) \leq \frac{C_\sigma^{1/\sigma}}{\varphi\left(\|A\|^{2}\right)} \varphi(1/t),
\end{equation*}
since $\varphi$ is increasing and $\sigma<1$.
\par
Thus, we obtain,get from (\ref{eqhdy14}), that
\begin{equation*}
\left\|\mathbb{E} x(t)-x^{\dagger}\right\|^{2} \leq C_3 \varphi(1/t)
\end{equation*}
with
\begin{equation*}
C_3 = \frac{C_\sigma^{1/\sigma}}{\varphi\left(\|A\|^{2}\right)} \left\|x_0 - x^{\dagger}\right\|^{2} + C_4 (1-e^{-1}) + \frac{C_\sigma C_4}{(1-\sigma)e^{1-\sigma}} .
\end{equation*}

Finally, the equivalence of inequalities (\ref{eqhdy8}) and (\ref{eqhdy8new}) follows from the following two inequalities for a large- enough $t$:
\begin{equation*}
\begin{aligned}
& \mathbb{E} \|x^\delta(t) - x^\dagger\|^2 = \|\mathbb{E} x^\delta(t) - x^\dagger\|^2 + \mathbb{E} \|x^\delta(t) - \mathbb{E} x^\delta(t)\|^2 \\ &
 \leq \|\mathbb{E} x^\delta(t) - x^\dagger\|^2 + tr(Q(A^*A)^{-1}) [f(t)]^2 \leq \|\mathbb{E} x^\delta(t) - x^\dagger\|^2 + C \varphi(1/t),
\end{aligned}
\end{equation*}
\begin{equation*}
\begin{aligned}
& \mathbb{E} \|x^\delta(t) - x^\dagger\|^2 \geq \|\mathbb{E} x^\delta(t) - x^\dagger\|^2 + \frac{1}{2} [f(t)]^2 \sum\limits^\infty\limits_{j=1} \frac{q_j}{\lambda^2_j} \left( 1 - e^{-2\lambda^2_j t} \right) \\ &
\geq\|\mathbb{E} x^\delta(t) - x^\dagger\|^2 + \frac{1}{2} tr(Q(A^*A)^{-1}) [f(t)]^2 \geq \|\mathbb{E} x^\delta(t) - x^\dagger\|^2 + C' \varphi(1/t),
\end{aligned}
\end{equation*}
which is obtained from the bias-variance decomposition (\ref{BiasVarianceDecomposition}) and the assumption $f(t)= C_\varphi \sqrt{\varphi(1/t)}$. Here, $C$ and $C'$ are two fixed numbers independent of $t$.
\end{proof}

\begin{remark}
The inequality (\ref{eqhdy9}) has close connections to variational inequalities and range-type source conditions, which are frequently used in regularization theory, (see, e.g., \cite{Flemming2013,HofmannKaltenbacher2007,HofmannYamamoto2010,Mathe-2003}). To be more precise, let $\varphi: [0,\infty)\to [0,\infty)$  be an increasing, continuous function and $\nu \in (0,1)$. Then, according to \cite{AlbaniElbauHoopScherzer2016}, the following two statements are equivalent:

(i) There exists a constant $C > 0$ with
\begin{equation*}
\omega(\lambda) \leq C_a \varphi^{2\nu} (\lambda) \quad \text{for all} \ \lambda > 0.
\end{equation*}

(ii) There exists a constant $C_b > 0$ such that
\begin{equation}\label{variationalIn}
\left| \langle x_0 -x^{\dagger}, x \rangle \right| \leq C_b   \| \varphi (L^{*} L) x  \|^{\nu} \|  x  \|^{1-\nu}  \quad \text{for all} \ x \in X.
\end{equation}

Moreover, the range-type source conditions, (i.e. $x_0 -x^{\dagger} \in  \mathcal{R} (\psi^{\nu} (A^{*} A))$) implyies the variational inequality (\ref{variationalIn}).  Conversely, the variational inequality \eqref{variationalIn} implies that the relation $x_0 -x^{\dagger} \in \mathcal{R} ( \psi^{\nu} (L^{*} L))$ holds for every continuous function $\varphi : [0,\infty)\to [0,\infty)$  with $\psi \geq c \varphi^{\mu}$ for some constant $c > 0$ and some $\mu \in (0, \nu)$.
\end{remark}

\subsection{The best worst-case mean-square error}

Let $B_\delta(y):= \left\{ \tilde{y} \in \mathcal{Y}:~ \|\tilde{y}-y\|\leq \delta \right\}$ denote the ball containing all possible measurement data $\tilde{y}$ with given accuracy level $\delta$, and let $x(t;\tilde{y})$ be the stochastic dynamic solution of (\ref{stochasticFlowLinear}), with $y^\delta$ replaced with $\tilde{y} \in  B_\delta(y)$. In the case of $\tilde{y}=y$, we have $x(t) \equiv x(t;y)$. In this subsection, we are interested in the convergence-rate results for the \emph{best worst-case mean-square error} $\sup_{\tilde{y}\in B_\delta(y)} \inf_{t>0} \mathbb{E} \|x(t,\tilde{y})-x^\dagger\|^2$, which represents the distance between the $x_0$-minimum norm solution $x^\dagger$ and the regularized stochastic solution $x(t,\tilde{y})$ for some data $\tilde{y}$ belonging to the ball $ B_\delta(y)$ under the optimal choice of the regularization parameter $t$. The proof technique is similar to that in \cite{AlbaniElbauHoopScherzer2016}, but some proof details were simplified.

We first investigate the convergence-rate results of best worst-case mean-square error in some special situations.

\begin{lemma}\label{BestWorst1}
Assume that there exists a constant $C_6 > 0$ such that
$$\mathbb{E} \|x(t) - x^\dagger \| = 0 \emph{ for all } t \geq C_6.$$
Then, we have
\begin{equation}
\label{eqWZQ15}
\sup_{\tilde{y}\in B_\delta(y)} \inf_{t>0} \mathbb{E} \|x(t,\tilde{y})-x^\dagger\|^2 \leq 2C_6 \vartheta^2 \delta^{2}.
\end{equation}
\end{lemma}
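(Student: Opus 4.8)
The plan is to reduce the stochastic quantity $\mathbb{E}\|x(t,\tilde{y})-x^\dagger\|^2$ to the purely deterministic data-propagation estimate already recorded in (\ref{Ineq-g}), by exploiting the fact that the data $\tilde{y}$ enters the solution only through an affine deterministic term. The key observation is that all the flows $x(t,\tilde{y})$ are driven by the \emph{same} $Q$-Wiener process $B_t$, so the stochastic convolution is data-independent. Concretely, the mild-solution representation (\ref{BVPsv}) gives, for every $\tilde{y}\in B_\delta(y)$,
\begin{equation*}
x(t,\tilde{y}) - x(t,y) = g(t,A^*A)A^*(\tilde{y}-y),
\end{equation*}
since the terms $(1-A^*Ag(t,A^*A))x_0$ and $\int_0^t e^{-A^*A(t-s)}f(s)\,dB_s$ are identical for both data and therefore cancel.

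Next I would bound the infimum over $t$ from above by evaluating at the single admissible time $t=C_6$, which is legitimate because $\inf_{t>0}(\cdots)\le(\cdots)\big|_{t=C_6}$. Writing $x(C_6,\tilde{y})-x^\dagger = \big(x(C_6,y)-x^\dagger\big) + g(C_6,A^*A)A^*(\tilde{y}-y)$ and applying the elementary inequality $\|a+b\|^2\le 2\|a\|^2+2\|b\|^2$ inside the expectation, I obtain
\begin{equation*}
\mathbb{E}\|x(C_6,\tilde{y})-x^\dagger\|^2 \le 2\,\mathbb{E}\|x(C_6,y)-x^\dagger\|^2 + 2\,\|g(C_6,A^*A)A^*(\tilde{y}-y)\|^2.
\end{equation*}
The hypothesis $\mathbb{E}\|x(t)-x^\dagger\|=0$ for $t\ge C_6$ forces $x(C_6,y)=x^\dagger$ almost surely, since it is a nonnegative random variable with vanishing mean; hence the first term is zero (in particular the variance also vanishes). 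For the second term I invoke exactly the spectral estimate (\ref{Ineq-g}) with $t=C_6$ and $\|\tilde{y}-y\|\le\delta$, namely $\|g(C_6,A^*A)A^*(\tilde{y}-y)\|\le \vartheta\,C_6^{1/2}\,\delta$.

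Combining these two facts yields $\mathbb{E}\|x(C_6,\tilde{y})-x^\dagger\|^2\le 2\vartheta^2 C_6\delta^2$ uniformly in $\tilde{y}\in B_\delta(y)$; taking the infimum over $t$ and then the supremum over $\tilde{y}$ preserves this uniform bound and delivers (\ref{eqWZQ15}). The only genuinely substantive step is the first one: recognizing that the noise term drops out of the data difference, so that the random fluctuation is entirely absorbed into the deterministic bias machinery of regularization theory. Once that cancellation is in hand, the argument reduces to the standard spectral bound (\ref{Ineq-g}) together with the interpretation of the hypothesis as almost-sure identity. I note in passing that the cross term could be controlled exactly, giving the sharper constant $\vartheta^2C_6\delta^2$, but the factor-$2$ splitting above is already sufficient for (\ref{eqWZQ15}).
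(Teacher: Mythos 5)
Your proposal is correct and follows essentially the same route as the paper's proof: the data-difference cancellation $x(t,\tilde y)-x(t,y)=g(t,A^*A)A^*(\tilde y-y)$, the spectral bound \eqref{Ineq-g} giving $\vartheta^2\delta^2 t$, the factor-$2$ splitting, and evaluation of the infimum at $t=C_6$ where the noise-free error vanishes. Your explicit observation that $\mathbb{E}\|x(C_6)-x^\dagger\|=0$ forces the almost-sure identity (and hence the vanishing of the second moment) is a small but welcome clarification of a step the paper leaves implicit.
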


\begin{proof}
Let $\tilde{y}\in B_\delta(y)$ be fixed. With the use of the inequality \eqref{Ineq-g}, it follows that, together with the identity $A g(t,A^*A)=g(t,AA^*)A$,
\begin{equation}
\label{eqWZQ16}
\mathbb{E} \left \| x(t;\tilde{y})-x(t)\right \|^{2}= \left \langle \tilde{y}-y,g_{\alpha}^{2}(AA^*)AA^*(\tilde{y}-y)\right \rangle \leq \delta ^{2} \max_{\lambda > 0}\lambda g^{2}(t,\lambda) \leq \vartheta^{2}\delta^{2} t.
\end{equation}
Since the right-hand side is uniform for all $\tilde{{y}}\in B_\delta(y)$, picking $t=C_6$, we get
\begin{equation*}
\sup_{\tilde{y}\in B_\delta(y)} \inf_{t>0} \mathbb{E} \|x(t;\tilde{y})-x^\dagger\|^2 \leq \inf_{t>0} \left\{ 2\mathbb{E}\|x(t)-x^\dagger\|^2 + 2\vartheta^2\delta^2 t \right\} \leq 2C_6 \vartheta^2 \delta^{2},
\end{equation*}
which is \eqref{eqWZQ15}.
\end{proof}

\begin{lemma}\label{BestWorst2}
Suppose that $\mathbb{E} \|x(t) - x^\dagger \| > 0 $ for all $t > 0$. If we choose for every $\delta > 0$ the parameter $t_\delta > 0$ such that
\begin{equation}
\label{eqHXY17}
\mathbb{E} \|x(t_\delta)-x^\dagger\|^2=  \delta^2 t_\delta,
\end{equation}
there exist two positive numbers $C_7$ and $C_8$ such that
\begin{equation}
\label{eqHXY18}
C_8 \delta^2 t_\delta \leq \sup_{\tilde{y}\in B_\delta(y)}\inf_{t>0} \mathbb{E} \|x(t, \tilde{y})-x^\dagger\|^2 \leq C_7 \delta^2 t_\delta\, \, \text{~for all~}\, \delta > 0.
\end{equation}
\end{lemma}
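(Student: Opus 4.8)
The plan is to prove the two inequalities in \eqref{eqHXY18} separately, exploiting that the data perturbation enters $x(t;\tilde y)$ only through a \emph{deterministic} term. Writing $\beta(t):=\mathbb{E}x(t)-x^\dagger=e^{-tA^*A}(x_0-x^\dagger)$ and, for $\tilde y\in B_\delta(y)$, $\eta(t;\tilde y):=g(t,A^*A)A^*(\tilde y-y)$, the explicit representation \eqref{BVPsv} gives $x(t;\tilde y)-x^\dagger=(x(t)-x^\dagger)+\eta(t;\tilde y)$, where $\eta(t;\tilde y)$ is non-random and the stochastic integral term is identical for all data. For the upper bound I would expand
\[
\mathbb{E}\|x(t;\tilde y)-x^\dagger\|^2=\mathbb{E}\|x(t)-x^\dagger\|^2+2\langle\beta(t),\eta(t;\tilde y)\rangle+\|\eta(t;\tilde y)\|^2,
\]
bound the cross term by Cauchy--Schwarz and Young's inequality using $\|\beta(t)\|^2\le\mathbb{E}\|x(t)-x^\dagger\|^2$, and insert the amplification estimate $\|\eta(t;\tilde y)\|^2\le\vartheta^2\delta^2 t$ taken from \eqref{eqWZQ16}. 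This yields $\mathbb{E}\|x(t;\tilde y)-x^\dagger\|^2\le 2\,\mathbb{E}\|x(t)-x^\dagger\|^2+2\vartheta^2\delta^2 t$ uniformly in $\tilde y$; evaluating $\inf_{t>0}$ at $t=t_\delta$ and invoking the defining relation \eqref{eqHXY17} then gives the right-hand inequality with $C_7=2(1+\vartheta^2)$.

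The lower bound is the substantial direction, because the regularization parameter may be tuned to each $\tilde y$, so I must produce a \emph{single} perturbation that keeps the error large for \emph{every} $t$. I would fix $\tilde y=y+\delta w$ with $\|w\|=1$ and choose the signs of $\langle w,v_j\rangle$ to agree with those of $\langle x_0-x^\dagger,u_j\rangle$. Using the singular-system expansion, this renders every summand of
\[
\langle\beta(t),\eta(t;\tilde y)\rangle=\delta\sum_j\frac{e^{-\lambda_j^2 t}(1-e^{-\lambda_j^2 t})}{\lambda_j}\langle x_0-x^\dagger,u_j\rangle\langle w,v_j\rangle
\]
nonnegative, so cancellation between the bias and the perturbation is excluded. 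Hence $\|\beta(t)+\eta(t;\tilde y)\|^2\ge\max\{\|\beta(t)\|^2,\|\eta(t;\tilde y)\|^2\}$, and the bias--variance decomposition \eqref{BiasVarianceDecomposition} gives
\[
\mathbb{E}\|x(t;\tilde y)-x^\dagger\|^2\ \ge\ \max\{\mathbb{E}\|x(t)-x^\dagger\|^2,\ \|\eta(t;\tilde y)\|^2\}\qquad\text{for all }t>0 .
\]

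It remains to force the right-hand side to be at least a fixed multiple of $\delta^2 t_\delta$ for every $t$, which I would do by splitting the infimum at $t_\delta$. For $t\le t_\delta$ the first entry controls the bound: since $t\mapsto\mathbb{E}\|x(t)-x^\dagger\|^2$ is non-increasing (the monotonicity that also makes $t_\delta$ well defined through \eqref{eqHXY17}), it is bounded below by $\mathbb{E}\|x(t_\delta)-x^\dagger\|^2=\delta^2 t_\delta$. For $t\ge t_\delta$ the second entry controls it: because each coefficient $\lambda_j^{-2}(1-e^{-\lambda_j^2 t})^2$ is increasing in $t$, we have $\|\eta(t;\tilde y)\|^2\ge\|\eta(t_\delta;\tilde y)\|^2$, so it suffices to choose the magnitudes $|\langle w,v_j\rangle|$ so that $\|\eta(t_\delta;\tilde y)\|^2\ge c\,\delta^2 t_\delta$ for some $c>0$. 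The hard part will be exactly this last step: concentrating the admissible mass of $w$ near the spectral maximizer $\lambda^\ast\sim t_\delta^{-1}$ of $\lambda\mapsto\lambda g^2(t_\delta,\lambda)$ so as to capture a fixed fraction of its maximum $\vartheta^2 t_\delta$, while simultaneously respecting the prescribed signs and coping with the gaps in $\{\lambda_j^2\}$ induced by the compactness of $A$ as $\delta\to0$. Granting such a $w$, the two regimes combine to give $\inf_{t>0}\mathbb{E}\|x(t;\tilde y)-x^\dagger\|^2\ge\min\{1,c\}\,\delta^2 t_\delta$, and since the supremum over $\tilde y\in B_\delta(y)$ dominates the value at this particular choice, the left-hand inequality follows with $C_8=\min\{1,c\}$.
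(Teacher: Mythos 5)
Your upper bound is correct and is essentially the paper's argument: the uniform amplification estimate $\|\eta(t;\tilde y)\|^2=\langle \tilde y-y,\,g^2(t,AA^*)AA^*(\tilde y-y)\rangle\le\vartheta^2\delta^2 t$ from \eqref{eqWZQ16}, followed by evaluation at $t=t_\delta$ and the defining relation \eqref{eqHXY17}, yields $C_7=2(1+\vartheta^2)$. The skeleton of your lower bound is also the paper's: make the cross term between the bias and the deterministic data perturbation nonnegative, reduce to $\max\{\mathbb{E}\|x(t)-x^\dagger\|^2,\ \|\eta(t;\tilde y)\|^2\}$, and split the infimum at $t_\delta$ using that the first entry is non-increasing and the second non-decreasing in $t$.

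The step you defer --- producing a unit vector $w$ with $\|\eta(t_\delta;y+\delta w)\|^2=\delta^2\langle w, g^2(t_\delta,AA^*)AA^*w\rangle\ge c\,\delta^2 t_\delta$ --- is a genuine gap, and it is the substantive content of this direction of the lemma. Since $AA^*$ is compact, $\lambda\mapsto\lambda g^2(t_\delta,\lambda)=(1-e^{-\lambda t_\delta})^2/\lambda$ must be evaluated only on the discrete spectrum $\{\lambda_j^2\}$; the function is of order $t_\delta$ only for $\lambda\asymp t_\delta^{-1}$ (it degenerates like $\lambda t_\delta^2$ below and like $1/\lambda$ above), and as $\delta\to0$ one has $t_\delta\to\infty$, so whether an eigenvalue sits in a window $[\kappa^{-1}t_\delta^{-1},\,t_\delta^{-1}]$ with $\kappa$ independent of $\delta$ depends on the eigenvalue distribution: for super-geometrically decaying spectra the relative gaps $\lambda_j^2/\lambda_{j+1}^2$ blow up and no such $w$ with a $\delta$-independent $c$ exists. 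The paper closes this exactly where you stop: it \emph{assumes} a spectral-gap condition (e.g. $\max_j\sigma_j^2/\sigma_{j+2}^2=:\kappa_A<\infty$), sets $T_\delta=\kappa_A t_\delta$ so that $[T_\delta^{-1},t_\delta^{-1}]$ always contains an eigenvalue, takes $z_\delta$ as in \eqref{eqHXY21} to be the spectral projection $F_{[T_\delta^{-1},t_\delta^{-1}]}$ applied to $g(t_\delta,AA^*)AA^*y-y$ (or any nonzero element of the range of that projection if this vanishes), and obtains $\min_{\lambda\in[T_\delta^{-1},t_\delta^{-1}]}\lambda g^2(t_\delta,\lambda)\ge(1-e^{-1/\kappa_A})^2\,t_\delta$. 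Incidentally, this projection choice makes the cross term in \eqref{eqHXY20} equal to $\tfrac{2\delta}{\|z_\delta\|}\langle g(t,AA^*)z_\delta,z_\delta\rangle\ge0$ automatically, so your separate sign-alignment device (which is compatible with, but does not by itself deliver, the spectral concentration) is not needed. Without importing some assumption of this kind, your construction cannot be completed.
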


\begin{proof}
By combining (\ref{BVPsv}), (\ref{BiasVarianceDecomposition}), (\ref{ConvergenceStochastic}), and (\ref{representation}), we have
\begin{equation}\label{BaisErr}
\mathbb{E} \|x(t) - x^\dagger\|^2 =  \int^{\|A\|^2}_0 e^{-\lambda t} d \omega(\lambda) +   \sum\limits^\infty\limits_{j=1} \int^t_0 q_j e^{-2\lambda^2_j (t-s)} [f(s)]^2 d s.
\end{equation}

Note that the function
\begin{equation*}
\xi(t) = t^{-1}\mathbb{E} \| x(t)-x^\dagger\|^2= \int^{\|A\|^2}_0 t^{-1} e^{-\lambda t} d \omega(\lambda) +  t^{-1} \sum\limits^\infty\limits_{j=1} \int^t_0 q_j e^{-2\lambda^2_j (t-s)} [f(s)]^2   d s
\end{equation*}
is, according to the assumption that $\mathbb{E} \|x(t) - x^\dagger\| > 0$ for all $t > 0$, continuous and strictly decreasing, and satisfies $\lim_{t\to\infty} \xi(t) = 0$ and $\lim_{t\to0} \xi(t) = \infty$. Therefore, we find for every $\delta > 0$ a unique value $t_\delta = \xi^{-1}(\delta^2)$.

Let $\tilde{y} \in  B_\delta(y)$. Then, as in the proof of Lemma \ref{BestWorst1} (see \eqref{eqWZQ16}), we find that
\begin{equation*}
\mathbb{E} \|x(t;\tilde{y})-x(t)\|^2 \leq \vartheta^2\delta^2 t.
\end{equation*}

From this estimate, we obtain, with the triangular inequality and the definition \eqref{eqHXY17} of $t_\delta$,
\begin{equation*}
\sup_{\tilde{y}\in B_\delta(y)} \inf_{t>0} \mathbb{E} \|x(t;\tilde{y})-x^\dagger\|^2 \leq \inf_{t>0} \left\{ 2\mathbb{E} \|x(t)-x^\dagger\|^2 + 2\vartheta^2\delta^2 t \right\}^2 \leq 2(1+\vartheta^2)\delta^2 t_\delta,
\end{equation*}
which is the upper bound in \eqref{eqHXY18} with the constant $C_1 = 2(1+\vartheta^2)$.

For the lower bound in \eqref{eqHXY18}, we write, similarly,
\begin{equation}
\label{eqHXY20}
\begin{aligned}
\mathbb{E} \|x(t;\tilde{y})-x^\dagger\|^2 &= \mathbb{E} \|x(t)-x^\dagger\|^2 + \mathbb{E} \|x(t;\tilde{y})-x(t)\|^2 + 2 \mathbb{E} \langle x(t;\tilde{y})-x(t),x(t)-x^\dagger\rangle \\
&= \mathbb{E} \|x(t)-x^\dagger\|^2 + \mathbb{E} \langle \tilde{y}-y,\,g^2(t,AA^*)AA^*(\tilde{y}-y)\rangle \\
& \qquad + 2 \mathbb{E} \langle g(t,AA^*) (\tilde{y}-y), \,g(t,AA^*)AA^*y-y\rangle.
\end{aligned}
\end{equation}

Note that for every $t > 0$ there exists a large-enough number $T_\delta > t_\delta$ such that $[T^{-1}_\delta,t^{-1}_\delta]$ contains at least one eigenvalue of $AA^*$ and the inequality $1- e^{-  t_\delta/ T_{\delta}} \geq C_A $ holds with a fixed small positive number $C_A$, dependent on only the distribution of eigenvalues $\{ \sigma^2_j \}^{\infty}_{j=1}$ of $AA^*$. For instance, if $\max\limits_j \frac{\sigma^2_j}{\sigma^2_{j+2}} =: \kappa_A < \infty$, we set $T_\delta = \kappa_A t_\delta$, and consequently can choose $C_A = 1 - e^{-1/\kappa_A}$. In view of this, we let
\begin{equation}
\label{eqHXY21}
z_\delta = F_{[T^{-1}_\delta,t^{-1}_\delta]}(g(t_\delta,AA^*)AA^*y-y),
\end{equation}
where $F$ represents the spectral measure of the operator $AA^*$. Through the choice of $T_\delta$, $F_{[T^{-1}_\delta,t^{-1}_\delta]} \neq 0$.

We first consider the case when $z_\delta\neq0$. Then, if we set $\tilde{y} = y + \delta\frac{z_\delta}{\|z_\delta\|}$ , the equation \eqref{eqHXY20} becomes
\begin{equation*}
\mathbb{E} \|x(t,\tilde{y})-x^\dagger\|^2 = \mathbb{E} \|x(t)-x^\dagger\|^2 + \frac{\delta^2}{\|z_\delta\|^2} \mathbb{E} \langle z_\delta, g^2(t,AA^*)AA^*z_\delta\rangle + \frac{2\delta}{\|z_\delta\|} \mathbb{E} \langle g(t,AA^*)z_\delta,z_\delta\rangle.
\end{equation*}
We may drop the last term as it is non-negative, which gives us the lower bound
\begin{equation*}
\sup_{\tilde{y}\in B_\delta(y)} \inf_{t>0} \mathbb{E} \|x(t,\tilde{y})-x^\dagger\|^2 \geq \inf_{t>0} \left( \mathbb{E} \|x(t)-x^\dagger\|^2+\delta^2 \min_{\lambda\in[T^{-1}_\delta,t^{-1}_\delta]} \lambda g^2(t,\lambda)\right).
\end{equation*}
Now, from
\begin{equation*}
\lambda g^2(t,\lambda)= \frac{(1- e^{-\lambda t} )^2}{\lambda} \geq \frac{(1- e^{- T^{-1}_\delta t} )^2}{t^{-1}_\delta}  \,\,\text{for all}\,\lambda\in[T^{-1}_\delta,t^{-1}_\delta],
\end{equation*}
we can estimate further:
\begin{equation*}
\sup_{\tilde{y}\in B_\delta(y)} \inf_{t>0} \mathbb{E} \|x(t,\tilde{y})-x^\dagger\|^2 \geq \inf_{t>0} \left( \mathbb{E} \|x(t)-x^\dagger\|^2 + \delta^2 t_\delta \left(1- e^{- T^{-1}_\delta t} \right)^2 \right).
\end{equation*}
Now, since the first term is decreasing in $t$ (see (\textcolor{blue}{6})) and the second term is increasing in $t$, we can estimate the expression for $t > t_\delta$ from below using the second term at $t = t_\delta$, and for $t \leq t_\delta$ using the first term at $t = t_\delta$:
\begin{equation*}
\sup_{\tilde{y}\in B_\delta(y)} \inf_{t>0} \mathbb{E} \|x(t,\tilde{y})-x^\dagger\|^2 \geq \min \left\{ \mathbb{E} \|x(t)-x^\dagger\|^2, \delta^2 t_\delta (1- e^{- T^{-1}_\delta t_\delta})^2 \right\} \geq (1-e^{-1})^2 \delta^2 t_\delta ,
\end{equation*}
which is the lower bound in \eqref{eqHXY18} with $C_8 = (1-e^{-1})^2$.

If $z_\delta$, as defined by \textcolor{blue}{\eqref{eqHXY21}}, happens to vanish, the same argument works with an arbitrary non-zero element $z_\delta \in \mathcal{R}(F_{[T^{-1}_\delta,t^{-1}_\delta]})$ since the last term in \textcolor{blue}{\eqref{eqHXY20}} is zero for $\tilde{y}=y+\delta\frac{z_\delta}{\|z_\delta\|}$.
\end{proof}

From Lemma \ref{BestWorst1} and Lemma \ref{BestWorst2}, we now get an equivalence relation between the noisy and noise-free convergence rates.

\begin{theorem}\label{EquivalentNoisyRate}
Let $\phi(1/\cdot)\equiv \varphi(\cdot) \in \mathcal{S}^{g}_\zeta$ and, denote by
\begin{equation}
\tilde{\phi}(t) = \sqrt{t^{-1} \phi(t)} \ \text{and }\  \psi(\delta) = \delta^{2} \tilde{\phi}^{-1}(\delta).
\end{equation}
\par
Then, the following two statements are equivalent:
\begin{itemize}
	\item[(a)] There exists a constant $c > 0$ such that
	\begin{equation}\label{26}
	\sup_{\tilde{y} \in \tilde{B}_{\delta}(y)} \inf_{t > 0} \mathbb{E} \| x(t,\tilde{y}) - x^{\dagger} \|^{2} \leq c \psi(\delta) \quad \text{for all} \ \delta > 0.
	\end{equation}
	\item[(b)] There exists a constant $\tilde{c} > 0$ such that
	\begin{equation}\label{27}
	\mathbb{E} \| x(t) - x^{\dagger}\|^{2} \leq \tilde{c}\phi(t) \quad \text{for all} \ t >0.
	\end{equation}
\end{itemize}
\end{theorem}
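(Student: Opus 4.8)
\textbf{Proof proposal for Theorem \ref{EquivalentNoisyRate}.}

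The plan is to use Lemmas \ref{BestWorst1} and \ref{BestWorst2} as the structural backbone, reducing the equivalence between the noisy best worst-case rate and the noise-free mean-square rate to a purely real-variable comparison of the two index functions $\psi(\delta)$ and $\phi(t)$. The key observation is that both lemmas pin the best worst-case quantity to the single scalar $\delta^2 t_\delta$, where $t_\delta$ is defined by the balancing equation $\mathbb{E}\|x(t_\delta)-x^\dagger\|^2 = \delta^2 t_\delta$ (equivalently $\xi(t_\delta)=\delta^2$ in the notation of Lemma \ref{BestWorst2}). Thus the entire theorem amounts to showing that the bound $\mathbb{E}\|x(t)-x^\dagger\|^2 \le \tilde c\,\phi(t)$ for all $t$ is equivalent to $\delta^2 t_\delta = \mathcal{O}(\psi(\delta))$, and then recognizing $\psi(\delta)=\delta^2\tilde\phi^{-1}(\delta)$ as exactly the value $\delta^2 t_\delta$ would take if the noise-free error behaved like $\phi(t)$ at the balancing time.

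I would proceed as follows. First, I would dispose of the degenerate case: if $\mathbb{E}\|x(t)-x^\dagger\|=0$ for some finite $t$, then by Lemma \ref{BestWorst1} the left-hand side of (\ref{26}) is $\mathcal{O}(\delta^2)$, and one checks directly that (\ref{27}) holds trivially and that $\psi(\delta)\asymp\delta^2$, so both statements hold simultaneously. Assuming therefore $\mathbb{E}\|x(t)-x^\dagger\|>0$ for all $t>0$, Lemma \ref{BestWorst2} gives the two-sided bound $C_8\,\delta^2 t_\delta \le \sup\inf \mathbb{E}\|\cdot\|^2 \le C_7\,\delta^2 t_\delta$. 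For the direction (b)$\Rightarrow$(a): from (\ref{27}) and the definition of $t_\delta$ we have $\delta^2 t_\delta=\mathbb{E}\|x(t_\delta)-x^\dagger\|^2 \le \tilde c\,\phi(t_\delta)$, so $\delta^2 \le \tilde c\,t_\delta^{-1}\phi(t_\delta)=\tilde c\,\tilde\phi^2(t_\delta)$, which upon inverting the strictly monotone $\tilde\phi$ yields $t_\delta \le \tilde\phi^{-1}(\delta/\sqrt{\tilde c\,})$; feeding this back gives $\delta^2 t_\delta = \mathcal{O}(\delta^2\tilde\phi^{-1}(\delta))=\mathcal{O}(\psi(\delta))$, and the upper bound from Lemma \ref{BestWorst2} completes this direction.

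For the converse (a)$\Rightarrow$(b): the lower bound $C_8\,\delta^2 t_\delta \le c\,\psi(\delta)=c\,\delta^2\tilde\phi^{-1}(\delta)$ forces $t_\delta \le (c/C_8)\,\tilde\phi^{-1}(\delta)$, hence $\mathbb{E}\|x(t_\delta)-x^\dagger\|^2=\delta^2 t_\delta=\mathcal{O}(\delta^2\tilde\phi^{-1}(\delta))$. To promote this from the single sequence of balancing times $\{t_\delta\}_{\delta>0}$ to a bound valid at \emph{every} $t>0$, I would use that $\delta\mapsto t_\delta$ is a continuous strictly decreasing bijection of $(0,\infty)$ onto $(0,\infty)$ (established in Lemma \ref{BestWorst2} via the monotonicity of $\xi$), so for an arbitrary $t>0$ there is a unique $\delta$ with $t_\delta=t$, and at that $\delta$ we have $\mathbb{E}\|x(t)-x^\dagger\|^2 = \delta^2 t = \mathcal{O}(\delta^2\tilde\phi^{-1}(\delta))$. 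The final task is to re-express this $\delta$-bound in terms of $\phi(t)$: from $t=\tilde\phi^{-1}(\delta)$ we read off $\delta=\tilde\phi(t)=\sqrt{t^{-1}\phi(t)}$, whence $\delta^2 t = \phi(t)$, giving (\ref{27}) with $\tilde c$ absorbing the constants.

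The main obstacle I anticipate is the bookkeeping in the converse direction, specifically justifying that the $g$-subhomogeneity hypothesis $\varphi\in\mathcal{S}^g_\zeta$ is what makes the chain of inversions $\tilde\phi\leftrightarrow\tilde\phi^{-1}$ respect the $\mathcal{O}$-notation uniformly. The subtlety is that when I replace $\delta$ by $\delta/\sqrt{\tilde c\,}$ (or similar rescalings) inside $\tilde\phi^{-1}$, I must control $\tilde\phi^{-1}(\gamma\delta)$ by a constant multiple of $\tilde\phi^{-1}(\delta)$; this is precisely a subhomogeneity statement for the inverse function, and deriving it from the decreasing comparison function $g$ (together with the integrability of $\zeta$ used to keep $\tilde\phi$ genuinely an index function so that its inverse exists and is well-behaved) is where the technical care concentrates. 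I would isolate this as a short preliminary estimate on $\tilde\phi^{-1}$ before running the two implications, so that both directions then reduce to the clean algebraic manipulations sketched above.
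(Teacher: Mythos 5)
Your proposal is correct and follows essentially the same route as the paper's proof: the degenerate case via Lemma \ref{BestWorst1}, the reduction to the balancing quantity $\delta^2 t_\delta$ via the two-sided bound of Lemma \ref{BestWorst2}, and the same inversion arguments in both directions, with the subhomogeneity of $\phi$ and $\tilde\phi^{-1}$ (the paper's inequality $\psi(\tilde\gamma\delta)\le h(\tilde\gamma)\psi(\delta)$ and $\phi(\gamma t)\le g(1/\gamma)\phi(t)$) correctly identified as the technical ingredient needed to absorb the rescaling constants. The preliminary estimate you propose to isolate is exactly what the paper establishes at the outset of its proof.
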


\begin{proof}
From $\varphi\in\mathcal{S}^{g}_\zeta$, we have $\phi(\gamma t)\leq g(1/\gamma) \phi(t)$, which implies that $\tilde{\phi}(\gamma t) \leq \sqrt{\gamma^{-1} g(1/\gamma)}\tilde{\phi}(t)$, and so, by setting $\tilde{g}(\gamma) = \sqrt{\gamma^{-1} g(1/\gamma)}, \delta = \tilde{\phi}(t)$ and $\tilde{\gamma} = \tilde{g}(\gamma)$, we get
	\begin{equation*}
	\tilde{g}^{-1}(\tilde{\gamma})\tilde{\phi}^{-1}(\delta) \geq \tilde{\phi}^{-1}(\tilde{\gamma} \delta).
	\end{equation*}
Thus, we have
	\begin{equation}\label{28}
	\psi(\tilde{\gamma} \delta) = \tilde{\gamma}^{2}\delta^{2} \tilde{\phi}^{-1}(\tilde{\gamma}\delta) \leq \tilde{\gamma}^{2}\delta^{2} \tilde{g}^{-1}(\tilde{\gamma})\tilde{\phi}^{-1}(\delta) = h(\tilde{\gamma})\psi(\delta),
	\end{equation}
where $h(\tilde{\gamma}) = \tilde{\gamma}^{2} \tilde{g}^{-1}(\tilde{\gamma})$.
\par
In the case where $\mathbb{E} \|x(t) - x^{\dagger}\| = 0$ for all $t > C_6$ for some $C_6 > 0$, the inequality \eqref{27} is trivially fulfilled for some $\tilde{c} > 0$. Moreover, we know from Lemma \ref{BestWorst1} that then the inequality \eqref{eqWZQ15} holds, which implies the inequality \eqref{26} for some constant $c > 0$, since we have, according to the definition of the function $\psi$ and the decreasing property of $\tilde{\phi}(t)$, $\psi(\delta) \geq a\delta^{2}$ for all $\delta \in (0,\delta_{0})$ for some constants $a > 0$ and $\delta_{0} > 0$.
\par
Thus, we may assume that $\mathbb{E} \|x(t) - x^{\dagger}\| > 0$ for all $t > 0$.
\par
Let \eqref{27} hold. For arbitrary $\delta > 0$, we use the parameter $t_{\delta}$ defined in \eqref{eqHXY17}. Then, the inequality \eqref{27} implies that
\begin{equation*}
\delta^{2} t_{\delta} \leq \tilde{c}\phi(t_{\delta}).
\end{equation*}
Consequently,
\begin{equation*}
\tilde{\phi}^{-1} \Big( \frac{\delta}{\sqrt{\tilde{c}}}  \Big) \geq t_{\delta},
\end{equation*}
and therefore, using the upper bound in \eqref{eqHXY18} obtained in Lemma \ref{BestWorst2}, we find with \eqref{28} that
\begin{equation*}
\sup_{\tilde{y} \in \tilde{B}_{\delta}(y)} \inf_{t > 0} \mathbb{E} \|x(t;\tilde{y}) - x^{\dagger} \|^{2} \leq C_7 \delta^{2} t_{\delta} \leq C_7 \delta^{2} \tilde{\phi}^{-1} \left( \frac{\delta}{\sqrt{\tilde{c}}}  \right) = C_7\tilde{c} \psi \left( \frac{\delta}{\sqrt{\tilde{c}}} \right) \leq C_7\tilde{c}h(\tilde{c}^{-1/2}) \psi(\delta),
\end{equation*}
which is the estimate \eqref{26} with $c = C_7\tilde{c}h(\tilde{c}^{-1/2})$.
\par
Conversely, if \eqref{26} holds, we choose an arbitrary $\delta > 0$ such that $t_{\delta}$ is defined by \eqref{eqHXY17}. Then, we can use the lower bound in \eqref{eqHXY18} of Lemma \ref{BestWorst2} to obtain, from the condition \eqref{26},
\begin{equation*}
C_8 \delta^{2} t_{\delta} \leq c\psi(\delta).
\end{equation*}
Thus, from the definition of $\psi$, we have $\frac{C_8}{c}  t_{\delta} \leq \tilde{\phi}^{-1}(\delta)$, and consequently
\begin{equation*}
\left(\frac{C_8}{c}  t_{\delta} \right)^{-1}  \phi \left(\frac{C_8}{c}  t_{\delta} \right)  = \left[ \tilde{\phi} \left(\frac{C_8}{c}  t_{\delta} \right) \right]^2 \geq \delta^2,
\end{equation*}
So, finally, we obtain, with the inequality $\phi(\gamma t)\leq g(1/\gamma) \phi(t)$,
\begin{equation*}
\mathbb{E} \| x(t_{\delta}) - x^{\dagger} \|^{2} = \delta^{2}t_{\delta} \leq \left(\frac{C_8}{c}  t_{\delta} \right)^{-1}  \phi \left(\frac{C_8}{c}  t_{\delta} \right) t_{\delta}  =
\frac{c}{C_8} \phi \Big( \frac{C_8}{c} t_{\delta} \Big) \leq \frac{c}{C_8}g\left(\frac{c}{C_8}\right)\phi(t_{\delta}),
\end{equation*}
and, since this holds for every $\delta$, \eqref{27} holds with $\hat{c} = \frac{c}{C_8}g(\frac{c}{C_8})$.

\end{proof}



\section{Numerical realization of SAR}
\label{Numerical}

In order to use SAR in practice, we need to discretize the stochastic flow \eqref{stochasticFlowLinear} for the introduced artificial time variable. Numerous algorithms have been proposed (see, e.g., \cite{JentzenKloeden2009, LordPowell2014}) for accurate numerical approximation of stochastic differential equations. Here, we propose two of the simplest numerical approaches: the Euler method,
\begin{eqnarray}\label{EM}
x^\delta_{k+1} = x^\delta_{k} + \Delta t A^* ( y^\delta- A x^\delta_{k} ) + f_k \Delta B_k, \quad x^\delta_0 = x_0, \quad \Delta t\in(0, 2/\|A\|^2),
\end{eqnarray}
and the exponential Euler method,
\begin{equation}\label{EEM}
x^{\delta}_{k+1}=e^{-A^{*}A\Delta t}\big[x_k^{\delta}+A^{*}y^{\delta}\Delta t
+ f_k\Delta B_k\big],
\end{equation}
Where $f_k=f(t_k)$, $\Delta t$ is the size of the uniform time step, and
\begin{equation*}
\Delta B_k=B(t_{k+1})-B(t_k)=\sum_{j=1}^{\infty}\sqrt{q_j}u_j\big(\beta_j(t_{k+1})-\beta_j(t_k)\big)=\sqrt{\Delta t}\sum_{j=1}^{\infty}\sqrt{q_j}u_j\xi_j^k,
\end{equation*}
with $\{\xi_j^k\}_{j,k}$ being a sequence of independent ${\mathcal N}(0,1)$-random variables.

\begin{proposition}
Let $f\in L^\infty({\mathbb R}_+)$ be globally Lipschitz continuous with Lipschitz constant $\|f\|_{Lip}$.
The numerical solutions of \eqref{EM} and \eqref{EEM}  converge to the exact solution \eqref{MildSolution} of \eqref{stochasticFlowLinear} with order one in the mean-square sense, i.e.
 for $k=1,2,\cdots$,
\begin{equation}\label{MSorder}
{\mathbb E} \|x^{\delta}(t_k)-x^{\delta}_{k}\|^2
\leq C\Delta t^2,
\end{equation}
where $C:=C(t_k, {\rm tr}(Q(A^*A)^{-1}), \|A\|,\|A^{*}y^{\delta}\|, f)$.
\end{proposition}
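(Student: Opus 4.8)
The plan is to prove the mean-square convergence of both numerical schemes by reducing each to a recursive error estimate in which the local truncation error is controlled to order $\Delta t^2$ and then propagated forward through a discrete Gr\"onwall argument. Since both the Euler and exponential Euler schemes are linear in the state, I would treat them via the explicit forms already available in the paper. First I would write down, for a fixed final time $t_k = k\Delta t$, the exact mild solution restricted to the grid using \eqref{MildSolution}, and the numerical solution in closed form. For the exponential Euler method \eqref{EEM}, the closed form is especially convenient because unrolling the recursion yields an exact discrete analog of \eqref{MildSolution}, namely a telescoped sum $x^\delta_k = e^{-A^*Ak\Delta t}x_0 + \sum_{i=0}^{k-1} e^{-A^*A(k-i)\Delta t}A^*y^\delta\Delta t + \sum_{i=0}^{k-1} e^{-A^*A(k-i)\Delta t} f_i \Delta B_i$. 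Comparing this term-by-term with \eqref{MildSolution} isolates three error contributions: the deterministic drift-quadrature error, which is a standard Riemann-sum error of order $\Delta t$ per step accumulating to $O(\Delta t)$ but sharpened to $O(\Delta t^2)$ through the smoothing of $e^{-A^*As}$ together with the boundedness $\|A^*y^\delta\|$; and the stochastic quadrature error.

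Next, for the stochastic term I would employ the It\^o isometry exactly as in the proof of Proposition~\ref{ExistLinear}, so that the mean-square error of the noise contribution decomposes into a sum over the orthonormal basis $\{u_j\}$ weighted by the eigenvalues $q_j$. The key point is that on each subinterval $[t_i,t_{i+1}]$ the difference between the exact integrand $e^{-A^*A(t_k-s)}f(s)$ and its frozen value $e^{-A^*A(t_k-t_i)}f_i$ used in the scheme is controlled by the Lipschitz continuity of $f$ (giving a factor $\|f\|_{Lip}\Delta t$) and by the operator Lipschitz bound on the semigroup (giving a factor $\|A^*A e^{-A^*A\cdot}\|\,\Delta t$, which is again tamed using $\sup_\lambda \lambda e^{-\lambda\tau}$ as in Proposition~\ref{ExistLinear}). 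After squaring and summing, the trace assumption $\mathrm{tr}(Q(A^*A)^{-1})<\infty$ guarantees finiteness and yields the $O(\Delta t^2)$ bound in the mean-square sense. The deterministic and stochastic estimates are independent because the stochastic integral is mean-zero, so the cross terms vanish and the two contributions simply add.

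For the plain Euler scheme \eqref{EM} the argument is structurally the same but requires one extra layer: the scheme no longer telescopes into a clean discrete semigroup, so I would set up the error $e_k := x^\delta(t_k)-x^\delta_k$ recursively, writing $e_{k+1} = (I-\Delta t\,A^*A)e_k + (\text{local error}_k)$. The factor $(I-\Delta t\,A^*A)$ has operator norm at most $1$ under the step-size restriction $\Delta t\in(0,2/\|A\|^2)$ since $\sup_{\lambda\in(0,\|A\|^2]}|1-\Delta t\lambda|\le 1$, so the homogeneous part is non-amplifying. The local error at each step splits into a deterministic part of size $O(\Delta t^2)$ (second-order Taylor remainder of the exact flow over one step, controlled using $\|A^*y^\delta\|$ and $\|A\|$) and a stochastic part handled again by the It\^o isometry and the Lipschitz bound on $f$. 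Squaring the recursion, taking expectations, and using that the one-step stochastic increment is independent of $e_k$ and mean-zero, I obtain ${\mathbb E}\|e_{k+1}\|^2 \le {\mathbb E}\|e_k\|^2 + C\Delta t^3 + (\text{lower order})$, and summing over $k$ steps up to the fixed time $t_k$ gives the stated $O(\Delta t^2)$ with the constant $C$ depending on $t_k$, $\mathrm{tr}(Q(A^*A)^{-1})$, $\|A\|$, $\|A^*y^\delta\|$, and $f$ through $\|f\|_\infty$ and $\|f\|_{Lip}$.

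I expect the main obstacle to be the careful bookkeeping of the stochastic quadrature error in the infinite-dimensional setting: one must verify uniformly in $j$ that the per-interval Hilbert--Schmidt norm of the difference of integrands is summable against $\{q_j\}$, and that the Lipschitz-in-time bound on $f$ combines correctly with the operator smoothing so that the worst-case supremum $\sup_\lambda \lambda^2 e^{-\lambda\tau}$ (arising from differentiating the semigroup) does not destroy the order. This is exactly where the commutativity of $Q$ with $A^*A$ and the trace condition are indispensable, and where the estimate must be assembled with the same spectral bounds used in Proposition~\ref{ExistLinear} rather than by naive operator-norm bounds.
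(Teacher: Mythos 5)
Your proposal is correct and follows essentially the same route as the paper: both unroll the schemes \eqref{EM} and \eqref{EEM} (explicitly or via the one-step recursion), compare with the mild solution \eqref{MildSolution} term by term, bound the drift quadrature error using the spectral bound $\sup_{\lambda\in(0,\|A\|^2]}e^{-\lambda t}\bigl(1-e^{-\lambda\tau}\bigr)\le \|A\|^2\tau$ together with $\|A^*y^\delta\|$, and control the stochastic quadrature error via the It\^o isometry, the Lipschitz continuity of $f$, and ${\rm tr}(Q(A^*A)^{-1})<\infty$. The only real difference is that for the plain Euler scheme you propagate a local error through a discrete Gr\"onwall recursion, whereas the paper works with the fully unrolled closed form and telescopes $e^{-A^*At_k}-(I-A^*A\Delta t)^k$ directly; the two are equivalent, but note that your local-error formulation makes the one-step defect depend on $x^\delta(t_i)$ and therefore additionally requires the (routine) uniform moment bound $\sup_{i\le k}\mathbb{E}\|x^\delta(t_i)\|^2<\infty$ supplied by Proposition~\ref{ExistLinear}, which the paper's global decomposition sidesteps since its propagator differences act only on $x_0$, $A^*y^\delta$, and the increments $\Delta B_j$.
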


\begin{proof}
It follows from \eqref{EM} and \eqref{EEM} that
\begin{equation*}
x^{\delta}_{k}=(I-A^*A\Delta t)^kx_0+\sum_{j=0}^{k-1} (I-A^*A\Delta t)^{k-1-j} A^{*}y^{\delta}\Delta t+\sum_{j=0}^{k-1} (I-A^*A\Delta t)^{k-1-j} f_j\Delta B_j
\end{equation*}
and
\begin{equation*}
x^{\delta}_{k}=e^{-A^{*}A t_k}x_0+\sum_{j=0}^{k-1}e^{-A^{*}A (t_k-t_j)}A^{*}y^{\delta}\Delta t+\sum_{j=0}^{k-1}e^{-A^{*}A (t_k-t_j)}f_j\Delta B_j.
\end{equation*}
We recall that the exact solution \eqref{MildSolution}  of \eqref{stochasticFlowLinear} at time $t_k$ is
\begin{equation*}
x^{\delta}(t_k)=e^{-A^{*}At_k}x_0+\int_0^{t_k} e^{-A^{*}A(t_k-s)}A^{*}y^{\delta}ds
+ \int_0^{t_k} e^{-A^{*}A(t_k-s)}f(s)dB_s.
\end{equation*}
Subtracting the above two equations and taking the ${\mathbb E}\|\cdot\|^2$-norm, we get the estimate of the error for the Euler method \eqref{EM},
\begin{align*}
{\mathbb E}\|x^{\delta}(t_k)-x^{\delta}_{k}\|^2&\leq 3\|(e^{-A^{*}At_k}-(I-A^*A\Delta t)^k)x_0\|^2\\
&\quad+  3{\mathbb E}\Big\|\sum_{j=0}^{k-1}\int_{t_j}^{t_{j+1}}\big(e^{-A^{*}A(t_k-s)}-(I-A^*A\Delta t)^{k-1-j}\big)A^{*}y^{\delta}ds\Big\|^2\\
&\quad+3{\mathbb E}\Big\|\sum_{j=0}^{k-1}\int_{t_j}^{t_{j+1}}\big(e^{-A^{*}A(t_k-s)}f(s)-(I-A^*A\Delta t)^{k-1-j}f_j\big)d B(s)\Big\|^2\\
&\leq 3\|(e^{-A^{*}At_k}-(I-A^*A\Delta t)^k)x_0\|^2\\
&\quad+ 3k\Delta t \sum_{j=0}^{k-1}\int_{t_j}^{t_{j+1}}\big\|\big(e^{-A^{*}A(t_k-s)}-(I-A^*A\Delta t)^{k-1-j}\big)A^{*}y^{\delta}\big\|^2ds\\
&\quad+3 {\mathbb E}\sum_{j=0}^{k-1}\int_{t_j}^{t_{j+1}}\big\|\big(e^{-A^{*}A(t_k-s)}f(s)-(I-A^*A\Delta t)^{k-1-j}f_j\big)Q^{\frac12}\big\|_{HS(\mathcal{X};\mathcal{X})}^2d s,
\end{align*}
and the estimate of the error for the exponential Euler method \eqref{EEM},
\begin{align*}
{\mathbb E}\|x^{\delta}(t_k)-x^{\delta}_{k}\|^2
&\leq  2{\mathbb E}\Big\|\sum_{j=0}^{k-1}\int_{t_j}^{t_{j+1}}\big(e^{-A^{*}A(t_k-s)}-e^{-A^{*}A (t_k-t_j)}\big)A^{*}y^{\delta}ds\Big\|^2\\
&\quad+2{\mathbb E}\Big\|\sum_{j=0}^{k-1}\int_{t_j}^{t_{j+1}}\big(e^{-A^{*}A(t_k-s)}f(s)-e^{-A^{*}A (t_k-t_j)}f_j\big)d B(s)\Big\|^2\\
&\leq  2k\Delta t \sum_{j=0}^{k-1}\int_{t_j}^{t_{j+1}}\big\|\big(e^{-A^{*}A(t_k-s)}-e^{-A^{*}A (t_k-t_j)}\big)A^{*}y^{\delta}\big\|^2ds\\
&\quad+2 {\mathbb E}\sum_{j=0}^{k-1}\int_{t_j}^{t_{j+1}}\big\|\big(e^{-A^{*}A(t_k-s)}f(s)-e^{-A^{*}A (t_k-t_j)}f_j\big)Q^{\frac12}\big\|_{HS(\mathcal{X};\mathcal{X})}^2d s.
\end{align*}
Note that
\begin{align*}
&\|(e^{-A^{*}At_k}-(I-A^*A\Delta t)^k)\|_{{\mathcal L}({\mathcal X};{\mathcal X})}\\
&=\Big\| \sum_{\ell=0}^{k-1}e^{-A^{*}At_{k-1-\ell}}(e^{-A^*A\Delta t}-(I-A^*A\Delta t) ) (I-A^*A\Delta t)^{\ell} \Big\|_{{\mathcal L}({\mathcal X};{\mathcal X})}\\
&\leq k\|e^{-A^*A\Delta t}-(I-A^*A\Delta t) \|_{{\mathcal L}({\mathcal X};{\mathcal X})}\\
&\leq k\sup_{0<\lambda\leq \|A\|^2} |e^{-\lambda \Delta t}-(1-\lambda\Delta t)|
\leq t_k\|A\|^2\Delta t,
\end{align*}
and, for $0<s-t\leq \Delta t$,
\begin{align*}
\big\|e^{-A^{*}At}-e^{-A^{*}A s}\big\|_{{\mathcal L}({\mathcal X};{\mathcal X})}
\leq \sup_{\lambda\in(0,\|A\|^2]} e^{-\lambda t}\big(1-e^{-\lambda (s-t)}\big)
\leq  \|A\|^2\Delta t.
\end{align*}
Hence, for the Euler method \eqref{EM},
\begin{align*}
\big\|\big(e^{-A^{*}A(t_k-s)}-(I-A^*A\Delta t)^{k-1-j}\big)A^{*}y^{\delta}\big\|
&\leq \big\|\big(e^{-A^{*}A(t_k-s)}-e^{-A^*A(t_k-t_{j+1})}\big)A^{*}y^{\delta}\big\|\\
&+\big\|\big(e^{-A^*A(t_k-t_{j+1})}-(I-A^*A\Delta t)^{k-1-j}\big)A^{*}y^{\delta}\big\|\\
&\leq \|A^{*}y^{\delta}\|\|A\|^2 (1+t_k)\Delta t,
\end{align*}
and
\begin{align*}
&\big\|\big(e^{-A^{*}A(t_k-s)}f(s)-(I-A^*A\Delta t)^{k-1-j}f_j\big)Q^{\frac12}\big\|_{HS(\mathcal{X};\mathcal{X})}^2\\
&\leq {\rm tr}(Q(A^*A)^{-1})\|A\|^2 \Big[\|(e^{-A^{*}A(t_k-s)}-(I-A^*A\Delta t)^{k-1-j})f(s) \|^2\\
&\quad\quad+\|(I-A^*A\Delta t)^{k-1-j}(f(s)-f(t_j))\|^2\Big]\\
&\leq {\rm tr}(Q(A^*A)^{-1})\|A\|^2 \Big[t_{k-1-j} \|f\|^2_{L^{\infty}}\|A\|^4+ \|f\|_{Lip}^2 \Big](\Delta t)^2,
\end{align*}
which lead to the assertion \eqref{MSorder}. Similarly, for the exponential Euler method, we have \eqref{EEM},
\begin{align*}
\big\|\big(e^{-A^{*}A(t_k-s)}-e^{-A^{*}A (t_k-t_j)}\big)A^{*}y^{\delta}\big\|
&\leq \big\|e^{-A^{*}A(t_k-s)}-e^{-A^{*}A (t_k-t_j)}\big\|_{{\mathcal L}({\mathcal X};{\mathcal X})}\|A^{*}y^{\delta}\|\\
&\leq\|A^{*}y^{\delta}\| \sup_{\lambda\in(0,\|A\|^2]} e^{-\lambda(t_k-s)}\big(1-e^{-\lambda (s-t_j)}\big)\\
&\leq \|A^{*}y^{\delta}\| \|A\|^2\Delta t,
\end{align*}
and
\begin{align*}
&\big\|\big(e^{-A^{*}A(t_k-s)}f(s)-e^{-A^{*}A (t_k-t_j)}f_j\big)Q^{\frac12}\big\|_{HS(\mathcal{X};\mathcal{X})}^2\\
&\leq {\rm tr}(Q(A^*A)^{-1})\|A\|^2 \Big[\|(e^{-A^{*}A(t_k-s)}-e^{-A^{*}A (t_k-t_j)})f(s) \|^2+\|e^{-A^{*}A (t_k-t_j)}(f(s)-f(t_j))\|^2\Big]\\
&\leq {\rm tr}(Q(A^*A)^{-1})\|A\|^2 \Big[ \|f\|^2_{L^{\infty}}\|A\|^4+ \|f\|_{Lip}^2\Big](\Delta t)^2,
\end{align*}
which yield to the assertion \eqref{MSorder} for the exponential Euler method.

\end{proof}


\section{Numerical experiments}
\label{Examples}

This section presents some numerical experiments to illustrate the numerical behavior of SAR. In the first example of abstract integral equations, we demonstrate that, in contrast to the conventional deterministic regularization methods, SAR provides the uncertainty quantification of the regularized solution. In the second example, we further show that the uncertainty quantification of the estimated physical quantity can reveal and explicate the hidden information about the real-world inverse problem.

\subsection{A toy example showing the uncertainty capability of SAR}
\label{toy}

Our first group of examples are based on the following integral equation:
\begin{equation}\label{IntegralEq}
Ax(s):= \int^1_0 K(s,t) x(t) dt = y(s), \quad K(s,t)=s(1-t)\chi_{s\leq t} + t(1-s)\chi_{s> t}.
\end{equation}
If we choose $\mathcal{X}=\mathcal{Y}=L^2[0,1]$, the operator $A$ is compact, selfadjoint, and injective. It is well known that the integral equation (\ref{IntegralEq}) has a solution $x=-y''$ if $y\in H^2[0,1]\cap H^1_0[0,1]$. Furthermore, using the interpolation theory (see, e.g., ~\cite{Lions-1972}), it is not difficult to show that for $4p-1/2 \not\in \mathbb{N}$
\begin{eqnarray*}
\mathcal{R} ((A^*A)^{p}) = \left\{ x\in H^{4p}[0,1]:~x^{2l}(0)=x^{2l}(1)=0,~l=0,1,\cdots,\lfloor 2p-1/4 \rfloor \right\}.
\end{eqnarray*}
If we choose $y(s)=s^4(1-s)^3$, $x^\dagger=-6t^2(1-t)(2-8t+7t^2)$, and $x^\dagger\in R((A^*A)^{p})$ for all $p<5/8$. The numerical results are displayed in Figures \ref{Ex1fig1} and \ref{Ex1fig2}, where, in addition to a deterministic approximation solution (the expectation of SAR), the 85\% and 70\% confidence intervals are also provided. In the numerical application, we recommend the use of a small confidence interval when the data contain large noise, since in such a setting the exact solution function can be located in the corresponding confidence interval with high probability.

\begin{figure}[!htb]
\centering
\includegraphics[width=0.9\textwidth]{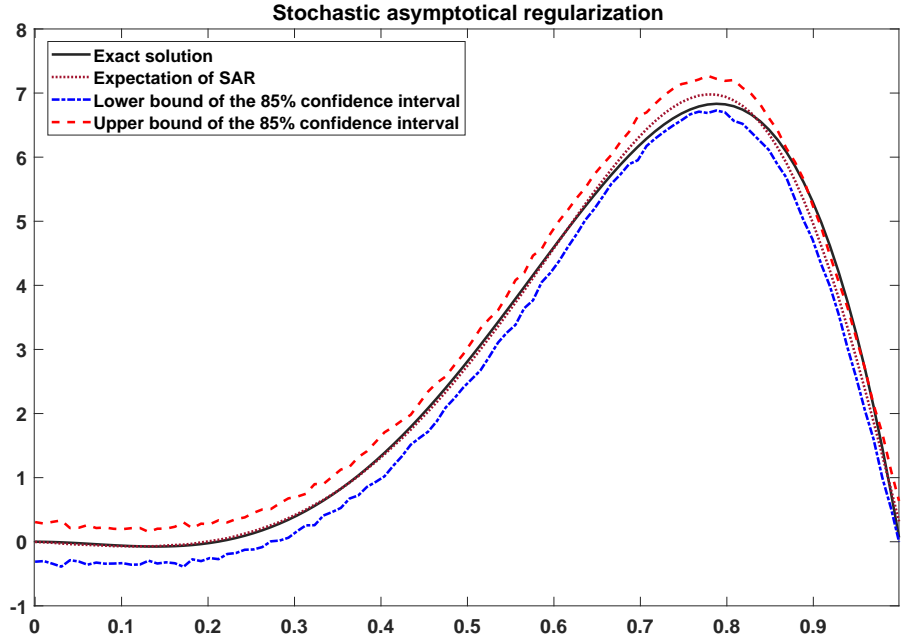}
\caption{The expectation of SAR and the 85\% confidence interval for problem (\ref{IntegralEq}) with noise level $\delta=1\%$. Other parameters: the size of discretization is 100, $\tau=1.1$, $\Delta t=0.1$.}
\label{Ex1fig1}
\end{figure}

\begin{figure}[!htb]
\centering
\includegraphics[width=0.9\textwidth]{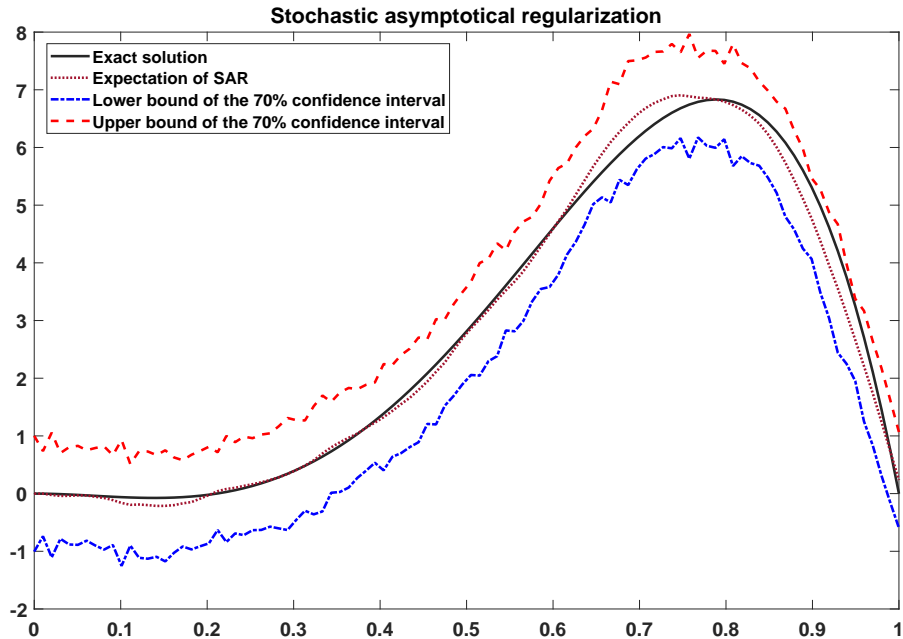}
\caption{The expectation of SAR and the 70\% confidence interval for problem (\ref{IntegralEq}) with noise level $\delta=5\%$. Other parameters: the size of discretization is 100, $\tau=1.1$, $\Delta t=0.1$.}
\label{Ex1fig2}
\end{figure}

\subsection{Biosensor tomography with real data}
\label{biosensor}

In this section, the developed SAR is applied to a biosensor tomography problem, i.e. finding the interaction information from biosensor data. We demonstrate that, in contrast to the conventional deterministic regularization methods, SAR provides the uncertainty quantification of the regularized solution, which can be further used to reveal and explicate the hidden information about the physical quantity of interest. To this end, we first briefly review the mechanism of biosensor tomography.

\subsubsection{The mathematical model}

For the considered biosensors, the analyte binds to an antibody that is attached to a surface. Then, a physical method (e.g. evanescent wave, surface plasmon resonance) is used to measure the surface concentration of the antibody-analyte complex. To better understand the construction of the mathematical model, we consider the three steps of the binding process during the modeling.

\textbf{Step 1}: ``1-to-1'' meta kinetic model. For each interaction we have
\begin{equation*}
[A] +[L] \autoleftrightharpoons{$k_a$}{$k_d$} [AL],
\label{equilibrium}
\end{equation*}
where $k_a$ and $k_d$ denote the association and dissociation rate constants, and $[A]$, $[L]$, and $[AL]$ represent the concentrations of the analyte, ligand, and complex, respectively. In our simplified model, the analyte $A$ is injected and flushed over the surface in such a way that the concentration $[A]$ can be assumed to be constant during the study. The amount of free ligand decreases with time according to $[L](t)=[L](0)-[AL](t)$. We assume that the sensor response $R$ is proportional to the complex concentration $[AL](t)$, i.e. $R(t)=C_b \cdot [AL](t)$, where $C_b$ is a constant. Let $R_{max}=C_b \cdot [L](0)$. Then, if the mass-transfer kinetics are extremely fast, the rate of complex formation satisfies the following dynamical equation:
\begin{equation}
\frac{d R(t)}{dt} = k_a \cdot [A](t) \cdot \left( R_{max} - R(t) \right) - k_d \cdot R(t).
\label{rateEq3}
\end{equation}
Setting $C=[A]$ (as mentioned above, $C$ is a fixed number) and $R(t_0)=0$, the solution to (\ref{rateEq3}) is
\begin{equation}
R(t) = R_{max} \cdot \frac{k_a C}{k_d + k_a C} \cdot\left( 1- e^{(k_d + k_a C)(t-t_0)} \right).
\label{rateEq4}
\end{equation}

\textbf{Step 2}: ``m-to-n'' kinetic model. We now assume that there are $m$ analytes and $n$ binding sites on the biosensor surface and first-order kinetics. Let $(k_{a,i}, k_{d,j})$ denote the pair of association and dissociation constants for the interaction between the $i$th analyte and $j$th binding site. Let $R_{i,j}(t)$ be the response at time $t$ of a complex with association constant $k_{a,i}$ and dissociation constant $k_{d,j}$. Then, according to (\ref{rateEq4}), we have
\begin{equation}\label{Rij}
R_{i,j}(t) = \left\{
\begin{array}{>{\displaystyle}l>{\displaystyle}l}
& 0, \qquad t\leq t_0 + \Delta t, \\
& R^{max}_{i,j}  \frac{k_{a,i} C}{k_{d,j}+ k_{a,i} C} \left( 1- e^{-(k_{d,j}+ k_{a,i} C)(t-t_0)} \right), \\
& \qquad\qquad\qquad\qquad\qquad t_0+ \Delta t < t \leq t_0+ t_{inj}+ \Delta t, \\ &
R^{max}_{i,j} \frac{k_{a,i} C}{k_{d,j}+ k_{a,i} C} \left( 1- e^{-(k_{d,j}+ k_{a,i} C) t_{inj}} \right) e^{-k_{d,j}(t-t_0-t_{inj})} , \\
& \qquad\qquad\qquad\qquad\qquad\qquad\qquad  t> t_0+ t_{inj}+ \Delta t,
\end{array}\right.
\end{equation}
where constant $C$ represents the concentration of the analyte, $t_0$ is the time at which the injection of the analyte begins, and $t_{inj}$ is the injection time. The adjustment parameter $\Delta t$ is a time delay that accounts for the fact that it usually takes some time for the detector to respond to the injection. In our software, $\Delta t$ can be automatically determined; it can be understood as a preconditioning of a real piece of data. Constant $R^{max}_{i,j}$ is the total surface-binding capacity, corresponding to association and dissociation constants $k_{a,i}$ and $k_{d,j}$, i.e. the detector response when every binding site on the biosensor surface has formed a complex with the analyte.

\textbf{Step 3}: Continuous kinetic model. We now use the functions $R_{i,j}$ to make an approximation of the measured sensorgrams $R_{obs}$. By employing the principle of superposition, the total measured response, $R_{obs}$, of a system can be written as a linear combination of some individual responses, namely $R_{obs}  = \sum^{m,n}_{i,j=1} R_{i,j}$. If we let $m,n\to+\infty$ in the above equation, we obtain the final mathematical model of biosensor tomography:
\begin{equation}
R_{obs}(t;C) = \int_{\Omega} K(t,C;k_a,k_d) x(k_a,k_d) d k_a d k_d, \quad  (k_a,k_d)\in \Omega,
\label{IntegralEq}
\end{equation}
where $\Omega\subset \mathbb{R}^2$ is the domain of rate constants that is of interest, and the kernel function $K(\cdot)$ is defined as
\begin{equation}\label{Rij}
K(t,C;k_a,k_d) = \left\{
\begin{array}{>{\displaystyle}l>{\displaystyle}l}
& 0, \qquad t\leq t_0 + \Delta t, \\
& \frac{k_{a} C}{k_{d}+ k_{a} C} \left( 1- e^{-(k_{d}+ k_{a} C)(t-t_0)} \right), \\
& \qquad\qquad\qquad\qquad\qquad t_0+ \Delta t < t \leq t_0+ t_{inj}+ \Delta t, \\ &
\frac{k_{a} C}{k_{d}+ k_{a} C} \left( 1- e^{-(k_{d}+ k_{a} C) t_{inj}} \right) e^{-k_{d}(t-t_0-t_{inj})} , \\
& \qquad\qquad\qquad\qquad\qquad\qquad\qquad  t> t_0+ t_{inj}+ \Delta t,
\end{array}\right.
\end{equation}
The function $x(k_a,k_d)$, which is the generalization of the total surface-binding capacity $\{R^{max}_{i,j}\}$, is known as the (continuous) rate-constant map. In the \emph{rate-constant-map theory} of chemical reactions, the local and global maximums of a rate-constant map reflect the interaction information about a chemical system consisting of various molecules:
\begin{itemize}
\item The global maximum of a rate-constant map reflects the principal interaction among molecules, and the coordinate of the global maximum represents the value of association and dissociation rate constants of the principle interaction.
\item The local maximums of a rate-constant map reflect the minor interactions among molecules. The number of local maximums shows the number of minor interactions in the considered environment. The corresponding coordinates of the local maximums represent the value of association and dissociation rate constants of the corresponding minor interactions.
\end{itemize}

In sum, according to the rate-constant-map theory, the biosensor tomography can be solved via the following two steps:
\begin{itemize}
\item Given noisy biosensor data $R_{obs}(t;C)$, find the approximate rate-constant map $x(k_a,k_d)$.
\item Determine the interaction information (i.e. the number of interactions and the corresponding association and dissociation rate constants) from the estimated rate-constant map $x(k_a,k_d)$.
\end{itemize}

In the next subsection, we show that SAR can substantially improve the second step of biosensor tomography through the use of the statistical property of SAR.

\subsubsection{Simulation study}

We tested SAR for biosensor tomography with real experimental data~-- the parathyroid hormone (PTH). In the experiment, the human PTH1R receptor was immobilized on a LNB-carboxyl biosensor chip using amine coupling according to the manufacturer's instructions. Using the flow rate 25 $\mu L/ min$ at 20.0$^{\circ}$C, we did 35 $\mu L$ injections of the peptide PTH(1-34) at 9 concentration levels from 1214 nM to 14571 nM; see the solid lines in Figure \ref{FigFit9}. The sensorgrams were measured with a QCM biosensor Attana Cell 200 (Attana AB, Stockholm, Sweden) instrument.

A standard adaptive linear finite-element method was adopted to discretize the two-dimensional integral equation (\ref{IntegralEq}). The initial triangulation was uniformly distributed in the log-scale domain $(\log_{10}(k_d), \log_{10}(k_a))\in [-4,0]\times[3,7]$ with $10\times10=200$ node points. Then, a standard adaptive approach (see, e.g., \cite{KoshevBeilina2013,ZhangForssen2018}) was employed to achieve better accuracy with minimum degrees of freedom. More precisely, we first solved integral equation (\ref{IntegralEq}) to obtain the solution on the current triangulation. The error was then estimated using the solution, and used to mark a set of triangles that were to be refined. The triangles were refined in such a way as to keep two of the most important properties of the triangulations: shape regularity and conformity. Our algorithm stopped at the 6-th iteration with 600 nodes and 1149 triangles, as shown in Figure \ref{FigFit9}. The total running time was 13.6 seconds \footnote{All the computations were performed on a dual-core personal computer with 1.00 GB RAM and MatLab version R2019b.}. The estimated rate-constant-mean map and corresponding intensity map and contour map are shown in Figures \ref{MeanMap} and \ref{intensityMap}, respectively. Figure \ref{FigFit9} also displays a comparison between the experimental data (solid line) and simulated response curves (dashed line), which was obtained by solving the forward problem with the expectation of SAR. The results, with approximately 95.7\% overlap, indicate that the estimated rate-constant-mean map can be used as the real rate-constant map in a deterministic model.

\begin{figure}[!htb]
\centering
\includegraphics[width=0.7\textwidth]{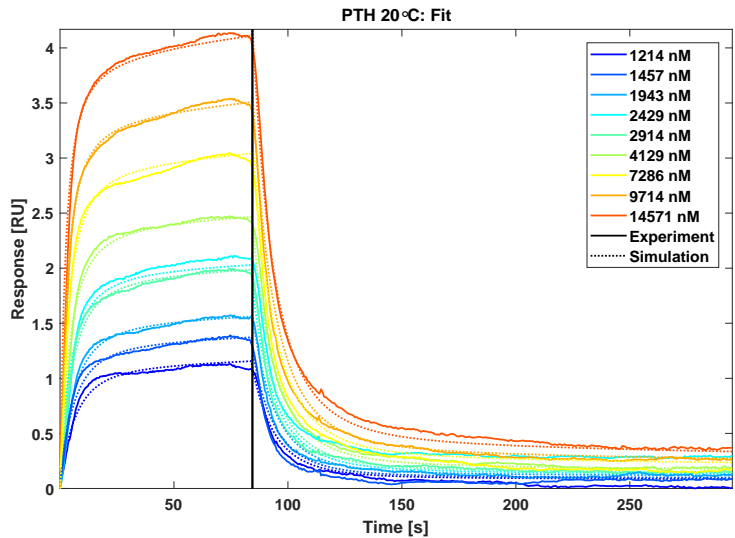}
\caption{The experimental data (solid line) and simulated response (dashed line) with different concentrations of PTH at a temperature of 20 $^\circ C$. The relative residual error equals 0.013.}
\label{FigFit9}
\end{figure}

\begin{figure}[!htb]
\centering
\includegraphics[width=0.7\textwidth]{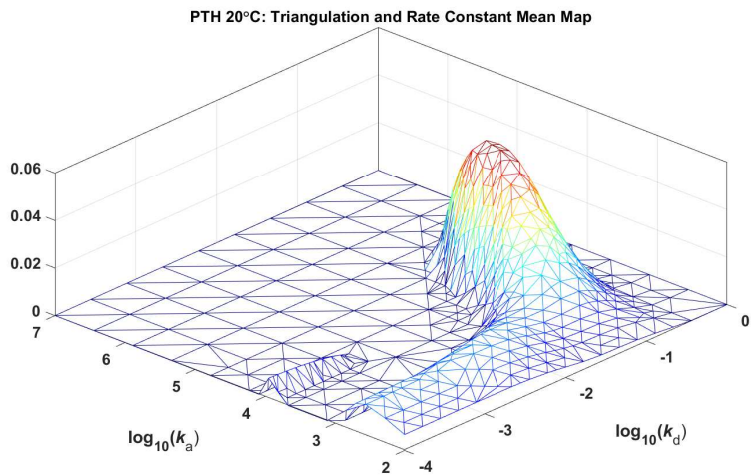}
\caption{The estimated rate-constant-mean map.}
\label{MeanMap}
\end{figure}

\begin{figure}[!htb]
\centering
\subfigure[]{
\includegraphics[width=0.45\textwidth]{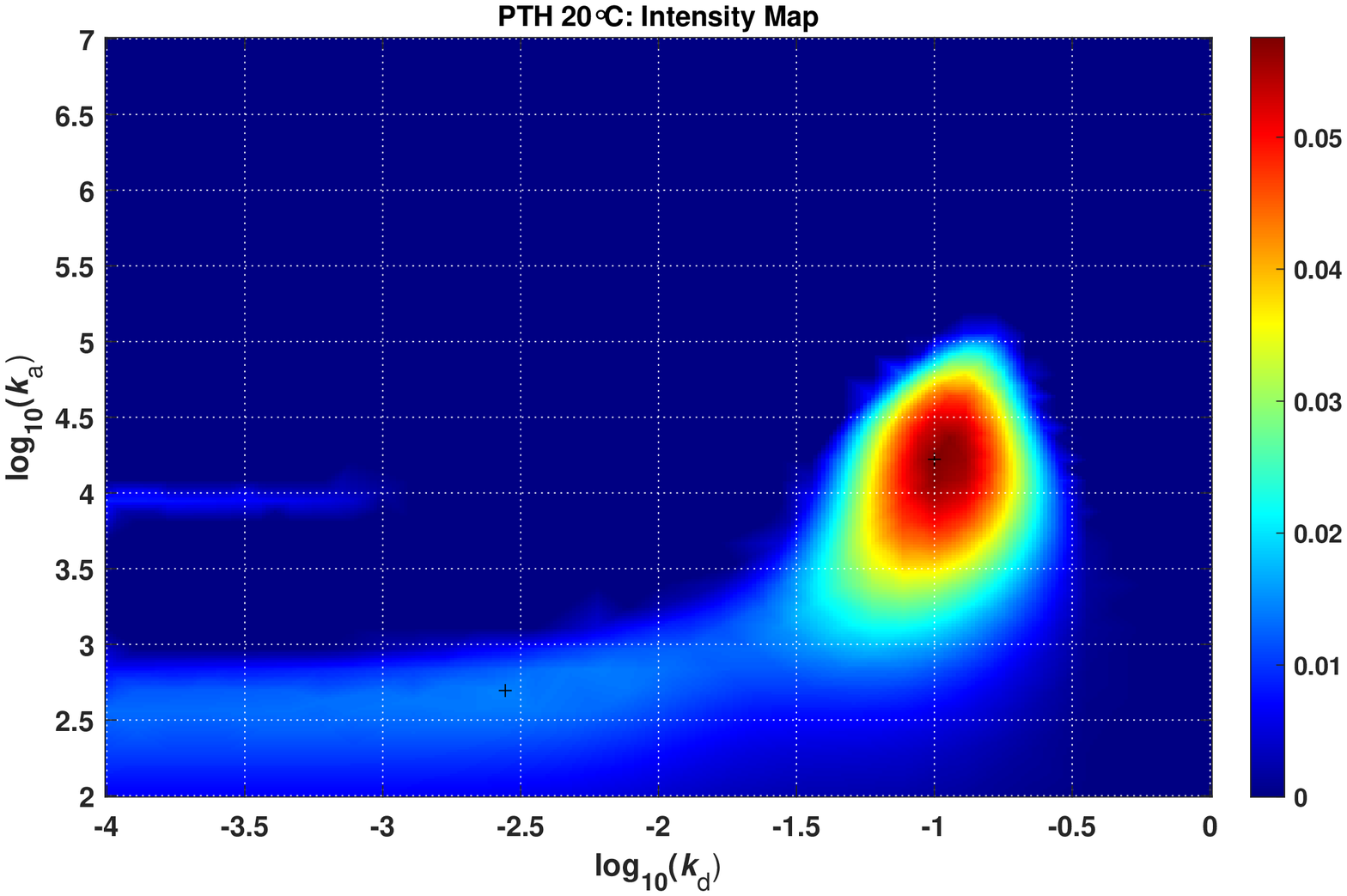}}
\subfigure[]{
\includegraphics[width=0.45\textwidth]{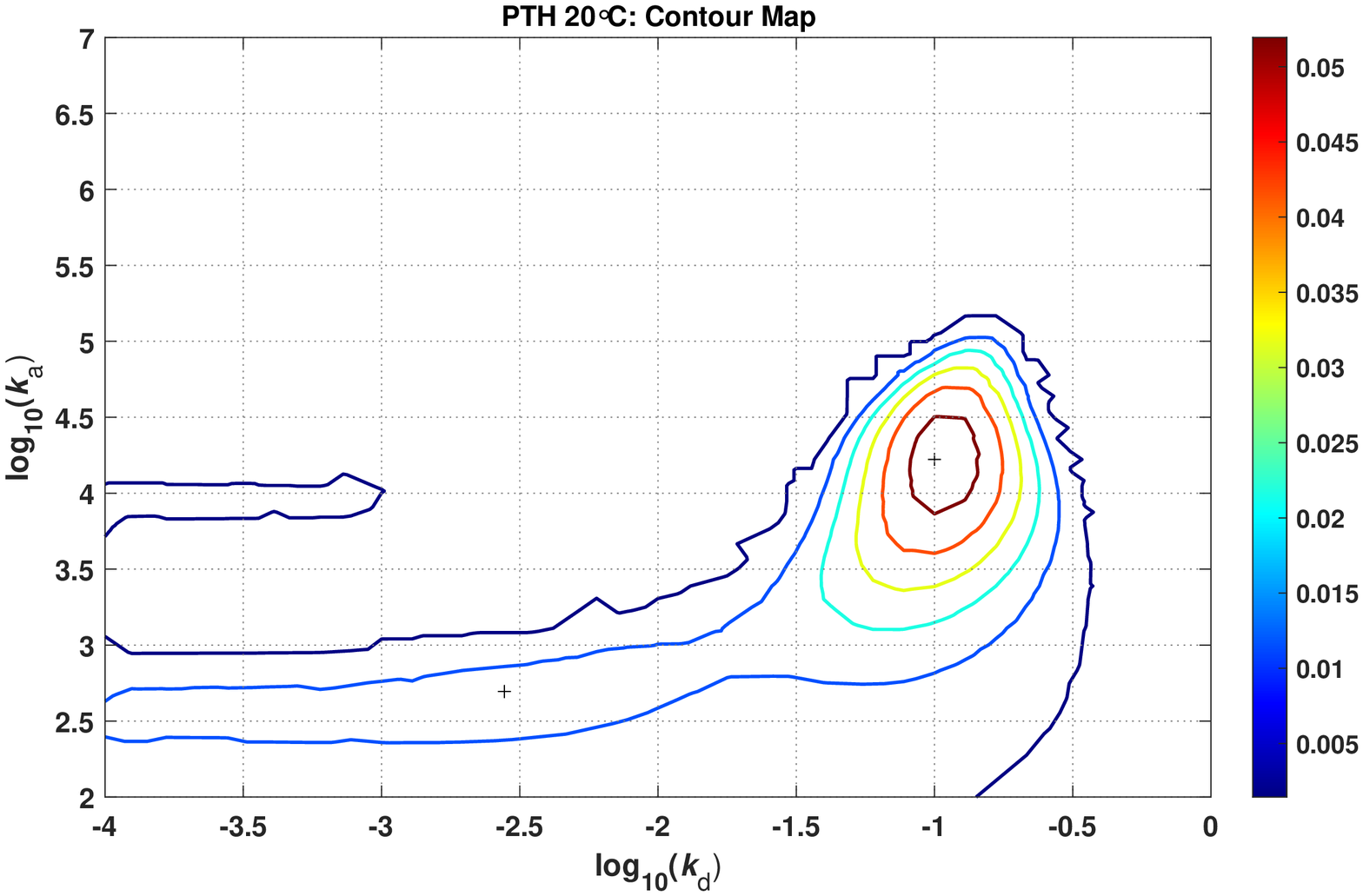}}
\caption{(a) The intensity map. (b) The contour map. }
\label{intensityMap}
\end{figure}

The results obtained so far (as shown in Figures \ref{FigFit9}-\ref{intensityMap}) are similar to those obtained from conventional deterministic regularization approaches, e.g. the Landweber iteration (\ref{Landweber}) and the Tikhonov regularization proposed in \cite{EMaWu2020}. Note that the approximate solution from FAR is a random variable, which provides uncertainty quantification in the estimation that would be useful for a real problem. More precisely, for the considered biosensor tomography, we can consider interaction-information maps from the high-order moments of FAR; Figures \ref{Moment2Map} and \ref{Moment2IntensityMap} show the results of maps based on second moments. From these figures, we can easily conclude that three interactions of the considered chemical systems exist, which theoretically confirms the empirical assumption of chemists and biologists that at least two interactions of the PTH system exist. Furthermore, from the maps based on second moments, we can easily determine the corresponding detailed information about three interactions: a principal interaction of the considered biosensor system exists. The corresponding dissociation and association rate constants are $(k_d, k_a)=(10^{-0.9}, 10^{4.4})$. Moreover, two minor interactions exist, and the corresponding dissociation and association rate constants are $(k_d, k_a)=(10^{-3.5}, 10^{3.9})$ and $(k_d, k_a)=(10^{-2.9}, 10^{2.6})$, respectively.

\begin{figure}[!htb]
\centering
\includegraphics[width=0.8\textwidth]{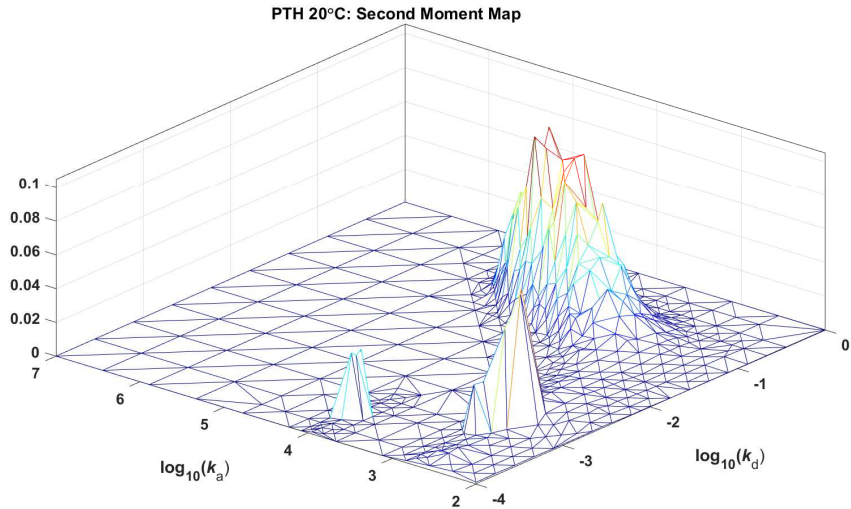}
\caption{The estimated second-moment map.}
\label{Moment2Map}
\end{figure}

\begin{figure}[!htb]
\centering
\subfigure[]{
\includegraphics[width=0.45\textwidth]{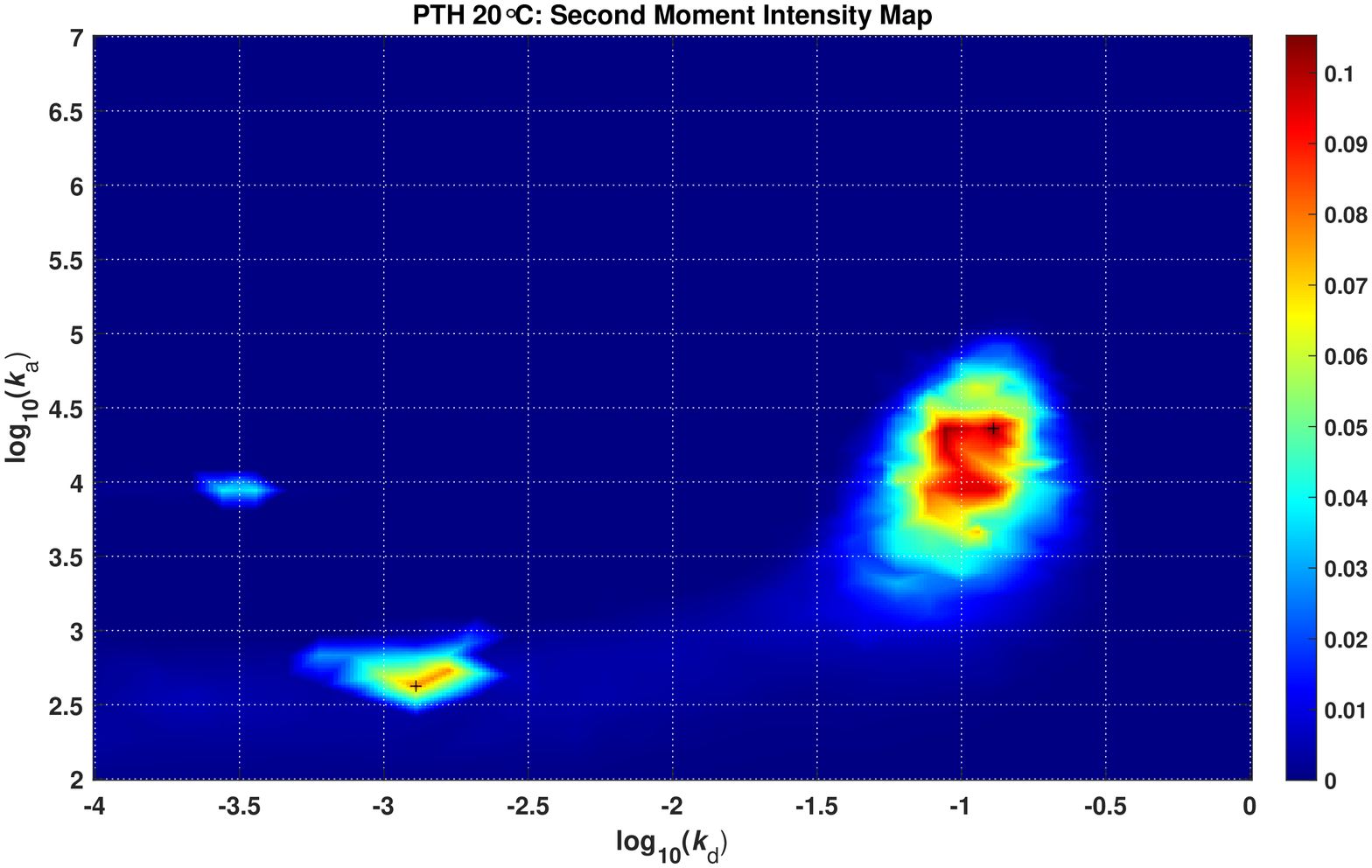}}
\subfigure[]{
\includegraphics[width=0.45\textwidth]{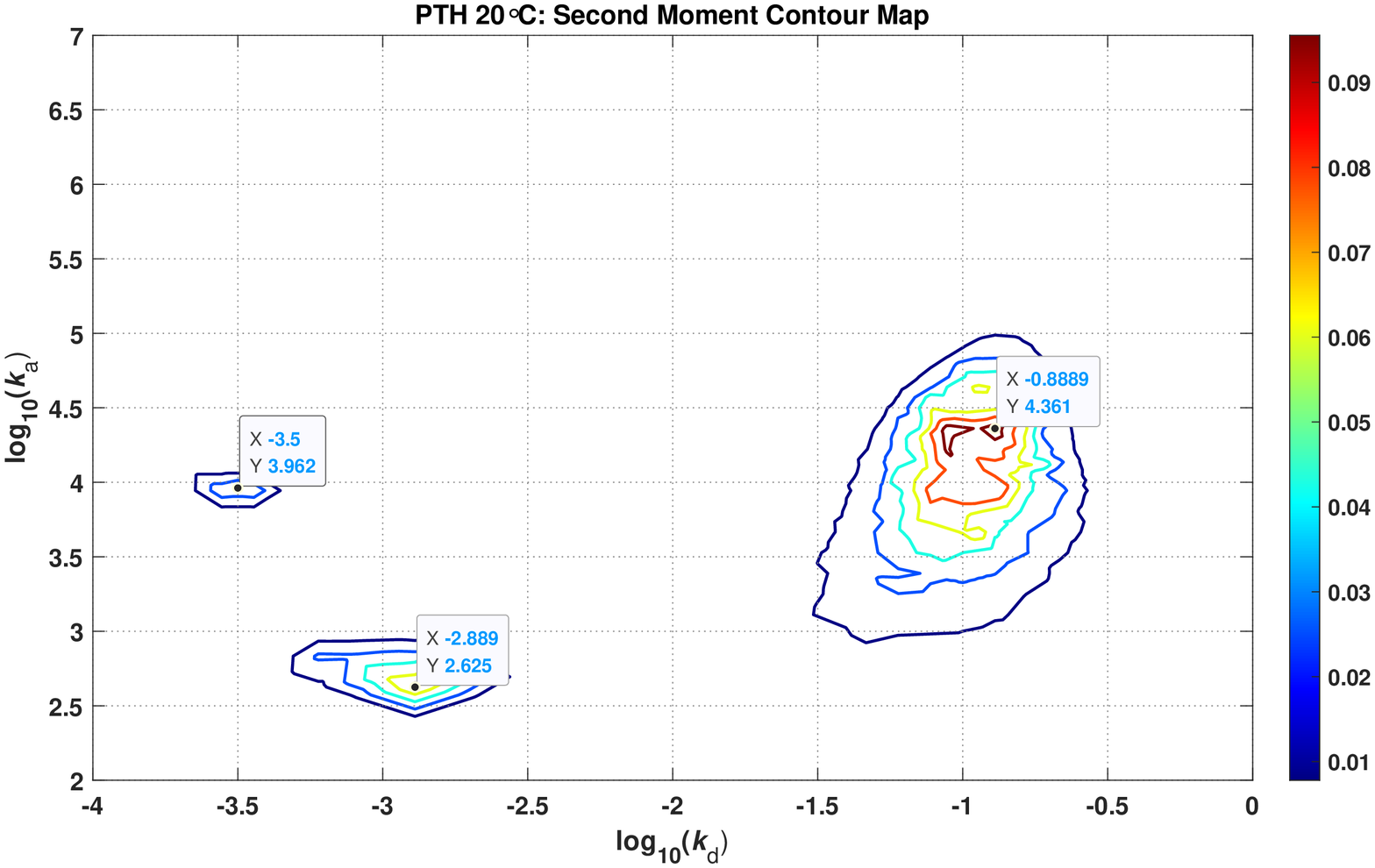}}
\caption{(a) The second-moment intensity map. (b) The second-moment contour map. }
\label{Moment2IntensityMap}
\end{figure}

It should be noted that these results are similar to the results in \cite{ZhangYao2019}, which uses the Bayesian model\footnote{The Bayesian model is a statistical model, while our model (operator equation (\ref{IntegralEq})) is deterministic.} with Markov Chain Monte Carlo and variational Bayesian approach. The computational cost of the Bayesian-based approach is clearly very high (to obtain a convergent result takes hours for a problem of similar size to those above). Finally, for the considered real data problem, the maps based on high-order moments, i.e. the third and fourth moment maps, present very similar sharps to the second-order maps, and hence we did not show the corresponding numerical results. However, for other biosensor systems, we remain inclined to suggest that this analysis be performed, since it is possible that the sharps of the maps based on high-order moments behave differently, which will be interesting to study when analyzing the advanced properties of interaction information about biosensor systems.

\section{Conclusions}
\label{sec:Con}

We develop a statistical approach~-- SAR for deterministic linear inverse problems. In this paper, a complete regularization theory of SAR has been presented and analyzed. In particular, it has been shown that SAR yields an optimal regularization method with regard to mean-square convergence. Compared with the conventional deterministic regularization method, SAR provides the uncertainty quantification of the estimated quantity, which is described by a deterministic forward model. A real data application of SAR for biosensor tomography showed that the uncertainty quantification of SAR can reveal and explicate the interaction information about a chemical system, which is hidden behind the original deterministic forward model. We therefore believe that SAR will be a useful tool for studying biosensor interactions and other real-world inverse problems with deterministic ill-posed forward models.

\section{Acknowledgement}

This work of Y. Zhang is supported by the Beijing Natural Science Foundation (Key project No. Z210001), the National Natural Science Foundation of China (No. 12171036), the Shenzhen Stable Support Fund for College Researchers 20200827173701001 and the Guangdong Fundamental and Applied Research Fund (No. 2019A1515110971).
The author C. Chen is supported by the National key R\&D Program of China (No. 2020YFA0713701), the National Natural Science Foundation of China (No. 11871068, No. 12022118) and by Youth Innovation Promotion Association CAS.

\bibliography{StochasticRegu}

\begin{thebibliography}{10}

\bibitem{AlbaniElbauHoopScherzer2016}
V.~Albani, P.~Elbau, M.V. de~Hoop, and O.~Scherzer.
\newblock Optimal convergence rates results for linear inverse problems in
  hilbert spaces.
\newblock {\em Numerical Functional Analysis and Optimization}, 37:521--540,
  2016.

\bibitem{Botetal18}
R.~Bo\c{t}, G.~Dong, P.~Elbau, and O.~Scherzer.
\newblock Convergence rates of first- and higher-order dynamics for solving
  linear ill-posed problems.
\newblock {\em Foundations of Computational Mathematics}, pages
  https://doi.org/10.1007/s10208--021--09536--6, 2021.

\bibitem{BottouCurtisNocedal2018}
L.~Bottou, F.E. Curtis, and J.~Nocedal.
\newblock Optimization methods for large-scale machine learning.
\newblock {\em SIAM Review}, 60:223--311, 2018.

\bibitem{DaPratoZabczyk2014}
G.~Da~Prato and J.~Zabczyk.
\newblock {\em Stochastic equations in infinite dimensions}, volume 152 of {\em
  Encyclopedia of Mathematics and its Applications}.
\newblock Cambridge University Press, Cambridge, second edition, 2014.

\bibitem{E2017}
W.~E.
\newblock A proposal on machine learning via dynamical systems.
\newblock {\em Communications in Mathematics and Statistics}, 5:1--11, 2017.

\bibitem{EMaWu2020}
W.~E, C.~Ma, and L.~Wu.
\newblock Machine learning from a continuous viewpoint, i.
\newblock {\em Science China Mathematics}, 63:2233--2266, 2020.

\bibitem{Flemming2013}
J.~Flemming.
\newblock Variational smoothness assumptions in convergence rate theory -- an
  overview.
\newblock {\em Journal of Inverse and Ill-posed Problems}, 21:395--409, 2013.

\bibitem{GonHofZhan2020}
R.~Gong, B.~Hofmann, and Y.~Zhang.
\newblock A new class of accelerated regularization methods, with application
  to bioluminescence tomography.
\newblock {\em Inverse Problems}, 36:055013, 2020.

\bibitem{HofmannKaltenbacher2007}
B.~Hofmann, B.~Kaltenbacher, C.~P\"oschl, and O.~Scherzer.
\newblock A convergence rates result for tikhonov regularization in banach
  spaces with non-smooth operators.
\newblock {\em Inverse Problems}, 23:987--1010, 2007.

\bibitem{HofmannYamamoto2010}
B.~Hofmann and M.~Yamamoto.
\newblock On the interplay of source conditions and variational inequalities
  for nonlinear ill-posed problems.
\newblock {\em Applicable Analysis}, 89:1705--1727, 2010.

\bibitem{JahnJin2020}
T.~Jahn and B.~Jin.
\newblock On the discrepancy principle for stochastic gradient descent.
\newblock {\em Inverse Problems}, 36:095009, 2020.

\bibitem{JentzenKloeden2009}
Arnulf Jentzen and Peter~E. Kloeden.
\newblock Overcoming the order barrier in the numerical approximation of
  stochastic partial differential equations with additive space-time noise.
\newblock {\em Proceedings of the Royal Society A: Mathematical,Physical and
  Engineering Sciences}, 465(2102):649--667, 2009.

\bibitem{JinLu2019}
B.~Jin and X.~Lu.
\newblock On the regularizing property of stochastic gradient descent.
\newblock {\em Inverse Problems}, 35:015004, 2019.

\bibitem{JinZhouZou2020}
B.~Jin, Z~Zhou, and J.~Zou.
\newblock On the convergence of stochastic gradient descent for nonlinear
  ill-posed problems.
\newblock {\em SIAM Journal on Optimization}, 30:1421--1450, 2020.

\bibitem{KoshevBeilina2013}
N.~Koshev and L.~Beilina.
\newblock An adaptive finite element method for fredholm integral equations of
  the first kind and its verificationon experimental data.
\newblock {\em Central European Journal of Mathematics}, 11:1489--1509, 2013.

\bibitem{Lions-1972}
J.~Lions and E.~Magenes.
\newblock {\em Non-Homogeneous Boundary Value Problems and Applications,
  Volumes I}.
\newblock Berlin: Springer, 1972.

\bibitem{LordPowell2014}
Gabriel~J. Lord, Catherine~E. Powell, and Tony Shardlow.
\newblock {\em An introduction to computational stochastic {PDE}s}.
\newblock Cambridge Texts in Applied Mathematics. Cambridge University Press,
  New York, 2014.

\bibitem{LuMathe2021}
S.~Lu and P.~Math\'e.
\newblock Stochastic gradient descent for linear inverse problems in {H}ilbert
  spaces. doi: https://doi.org/10.1090/mcom/3714.
\newblock {\em Mathematics of Computation}, 2021.

\bibitem{LuNiuWerner2021}
S.~Lu, P.~Niu, and F.~Werner.
\newblock On the asymptotical regularization for linear inverse problems in
  presence of white noise.
\newblock {\em SIAM/ASA J. Uncertainty Quantification}, 9:1--28, 2021.

\bibitem{Mathe-2003}
P.~Math\'e and S.~V. Pereverzev.
\newblock Geometry of linear ill-posed problems in variable {H}ilbert scales.
\newblock {\em Inverse Problems}, 19(3):789--803, 2003.

\bibitem{Nashed}
M.Z. Nashed.
\newblock A new approach to classification and regularization of ill-posed
  operator equations.
\newblock In H.W. Engl and C.W. Groetsch, editors, {\em Inverse and Ill-Posed
  Problems}, pages 53--75. Academic Press, 1987.

\bibitem{Rieder-2005}
A.~Rieder.
\newblock Runge-{K}utta integrators yield optimal regularization schemes.
\newblock {\em Inverse Problems}, 21:453--471, 2005.

\bibitem{RobbinsMonro1951}
H.~Robbins and S.~Monro.
\newblock A stochastic approximation method.
\newblock {\em The Annals of Mathematical Statistics}, 22:400--407, 1951.

\bibitem{Schock}
E.~Schock.
\newblock Approximate solution of ill-posed equations: arbitrarily slow
  convergence vs.~superconvergence.
\newblock {\em Constructive methods for the practical treatment of integral
  equations}, 73:234--243, 1985.

\bibitem{Tautenhahn-1994}
U.~Tautenhahn.
\newblock On the asymptotical regularization of nonlinear ill-posed problems.
\newblock {\em Inverse Problems}, 10:1405--1418, 1994.

\bibitem{Vainikko1986}
G.~Vainikko and A.~Veretennikov.
\newblock {\em Iteration Procedures in Ill-Posed Problems}.
\newblock Moscow: Nauka (In Russian), 1986.

\bibitem{ZhangForssen2018}
Y.~Zhang, P.~Forssen, T.~Fornstedt, M.~Gulliksson, and X.~Dai.
\newblock An adaptive regularization algorithm for recovering the rate constant
  distribution from biosensor data.
\newblock {\em Inverse Problems in Science and Engineering}, 26:1464--1489,
  2018.

\bibitem{ZhangHof2019}
Y.~Zhang and B.~Hofmann.
\newblock On fractional asymptotical regularization of linear ill-posed
  problems in {H}ilbert spaces.
\newblock {\em Fractional Calculus and Applied Analysis}, 22:699--721, 2019.

\bibitem{ZhangHof2018}
Y.~Zhang and B.~Hofmann.
\newblock On the second order asymptotical regularization of linear ill-posed
  inverse problems.
\newblock {\em Applicable Analysis}, 99:1000--1025, 2020.

\bibitem{ZhangYao2019}
Y.~Zhang, Z.~Yao, P.~Forssen, and T.~Fornstedt.
\newblock Estimating the rate constant from biosensor data via an adaptive
  variational {B}ayesian approach.
\newblock {\em Annals of Applied Statistics}, 13:2011--2042, 2019.

\bibitem{ZhaoMatheLu2020}
Y.~Zhao, P.~Math\'e, and S.~Lu.
\newblock Convergence analysis of asymptotical regularization and
  {R}unge-{K}utta integrators for linear inverse problems under variational
  source conditions.
\newblock {\em The CSIAM Transactions on Applied Mathematics}, 1:693--714,
  2020.

\end{thebibliography}
\bibliographystyle{plain}

\end{document}